\begin{document}
\title{Universality of the category of schemes}
\author{Satoshi Takagi}
\date{}
\maketitle

\begin{abstract}
In this paper, we generalize the construction
method of schemes to other algebraic categories,
and show that the category of coherent schemes
can be characterized by a universal property,
if we fix the class of Grothendieck topology.
Also, we introduce the notion of $\scr{C}$-schemes,
which is a further generalization of coherent schemes
and still shares common properties with ordinary
schemes.
\end{abstract}

\tableofcontents

\section{Introduction}

In this paper, we generalize the construction of schemes
to other algebraic categories.

There are already many attempts of extending the category
of schemes, for various reasons.
One of it is from the geometry over $\FF_{1}$:
the rough idea is to construct a scheme-like topological object
from monoids.
There are numerous attempts towards this goal,
and we cannot list them all.
Here, we just mention that
there is a survey on this topic \cite{Lorscheid}.
In this article, we focus on the relation between
the construction of ours and those of
To\"{e}n-Vaqui\'{e} \cite{TV}, and Deitmar \cite{Deitmar1}.
(the latter two constructions are actually equivalent,
according to \cite{Lorscheid}.)

However, not many have mentioned its universal property
of the proposed new objects and categories.
To claim that the new objects are ``good",
we must make it explicit how it is similar to schemes.
In other words, we should ask to ourselves,
\textit{
\begin{center}
``What is a scheme, anyway?"
\end{center}
}
We know how schemes are useful, but we don't know why.
Despite its heavy and complicated machinary,
the category of schemes behaves surprisingly good.
This implies that the construction of schemes,
or almost equivalently, the spectrum functor,
should be realized as an adjoint of a simpler functor.

In this paper, we go back to this fundamental question:
we give the universal characterization of the category of
coherent schemes.

The universality cannot be obtained, only by specifying
the algebraic object: even if we restrict ourselves to rings,
there are several ways of defining a scheme-like objects.
Therefore, we must designate its Grothendieck topology.

Also, we must be aware that the category
of schemes is not complete, although the category of rings
is co-complete.
Therefore, in order to give a universal property,
we should be able to apply the spectrum functor to
schemes:
if we call the scheme-like $V$-valued spaces
as $V$-schemes, then we must be able to define
$\cat{$V$-Scheme}^{\op}$-schemes.

This implies that the usual definition of schemes is not appropriate:
it uses the notion of local rings and local homomorphisms
where infinite operations occur, which is not available
for schemes in general.
Therefore, we give another way of describing this local property,
only using finite operations on the value category.
This idea is already mentioned in the previous preprint
of the author \cite{Takagi1}.
The advantage of this definition is that we can define
the spectrum functor as an adjoint of the global section functor,
which is by nature, much simpler.
Another advantage is that this gives a way to
define a larger category containing all coherent schemes, which is
complete: see \cite{Takagi2}.

Here, we will list up the main results.
Let $\scr{C}$ be a category with pull backs and
finite coproducts, equipped with a Grothendieck
topology $(\scr{E},\mathcal{O})$
(the precise definition is given in \S 3).
We call the triple $(\scr{C},\scr{E},\mathcal{O})$
a coherent site.
This is \textit{schematic},
if it admits finite open patchings
(again, the definition is given in \S 3).

\begin{Thm}
Let $\cat{CohSite}$, $\cat{CohSch}$ be the (2-)categories
of coherent sites, schematic
coherent sites, respectively.
\begin{enumerate}
\item The underlying functor 
$U:\cat{CohSch} \to \cat{CohSite}$
admits a left adjoint $\func{Sch}$.
The unit is the spectrum functor 
$\Spec:\scr{C} \to \cat{$\scr{C}$-Sch}=\func{Sch}(\scr{C})$.
(Theorem \ref{thm:univ:coh:sch})
\item When $\scr{C}$ is the opposite category
of commutative rings, with the topology
induced from ideals, then $\scr{C}$-schemes
are coherent schemes.
(Theorem \ref{thm:sch:local:space})
\end{enumerate}
\end{Thm}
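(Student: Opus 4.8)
The plan is to treat the two assertions separately, since the first is a formal $2$-adjunction statement and the second a concrete identification. For part (1), I would establish $\func{Sch} \dashv U$ by exhibiting $\Spec$ as a universal arrow: rather than constructing $\func{Sch}$ abstractly and then matching hom-categories, I would verify directly that for every schematic coherent site $\scr{D}$ and every morphism of coherent sites $f : \scr{C} \to U(\scr{D})$, there is a morphism of schematic coherent sites $\tilde{f} : \func{Sch}(\scr{C}) \to \scr{D}$, unique up to canonical $2$-isomorphism, with $U(\tilde{f}) \circ \Spec = f$. This says that $\Spec$ is initial in the comma $2$-category $(\scr{C} \downarrow U)$, which by the standard criterion is equivalent to the existence of the left adjoint together with the identification of the unit as $\Spec$; the $2$-cells are handled by the same patching mechanism as the $1$-cells.

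First I would make the construction of $\func{Sch}(\scr{C})$ explicit. Its objects are the $\scr{C}$-schemes, obtained by freely adjoining finite open patchings to the image of $\Spec$: concretely, a $\scr{C}$-scheme is a $\scr{C}$-valued space covered by finitely many affine pieces $\Spec A_{i}$ with $A_{i}$ in $\scr{C}$, whose overlaps are governed by the topology $(\scr{E},\mathcal{O})$. I would then check that $\func{Sch}(\scr{C})$ is itself schematic, i.e. closed under finite open patchings; this amounts to verifying that gluing $\scr{C}$-schemes along open pieces again yields a $\scr{C}$-scheme, which follows once the patching data satisfy the pullback (coherence) conditions built into the site. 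Functoriality of $\func{Sch}$ in the site is then routine, as a morphism of coherent sites transports affine pieces and gluing data compatibly.

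The heart of part (1) is the universal property. Given $f : \scr{C} \to U(\scr{D})$, I would define $\tilde{f}$ on affine pieces by $f$ itself and extend over a general $\scr{C}$-scheme using that $\scr{D}$ admits finite open patchings: the image under $f$ of an affine cover, together with the image of its gluing data, assembles via the patching operation in $\scr{D}$ into a well-defined object, and this assembly is forced to be unique because any schematic morphism must commute with patchings. The main obstacle here is coherence: one must show the extension is independent of the chosen affine cover and respects morphisms, which requires pullback stability of the topology and a careful check that the patching colimits in $\scr{D}$ are preserved. This is precisely where the finite-operations formulation of locality pays off, since it lets all the gluing be expressed by finite diagrams that $f$ automatically preserves.

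For part (2), with $\scr{C} = \cat{CRing}^{\op}$ and the ideal-induced topology, I would identify the abstract spectrum $\Spec A$ with the usual prime spectrum and its structure sheaf, check that the site's coverings correspond to the standard covers by distinguished opens, and verify that finite open patchings reproduce the classical gluing of affine schemes, so that $\scr{C}$-schemes are exactly the quasi-compact, quasi-separated (coherent) schemes. The genuinely delicate point is to reconcile the locality condition of the present framework --- phrased entirely through finite operations on $\scr{C}$ --- with the classical requirement that the structure sheaf be a sheaf of local rings with local homomorphisms, whose usual formulation invokes localization, an infinite operation. I expect this to be the real obstacle of part (2): one must prove that the finitary local-ness condition cuts out exactly the locally ringed spaces among all $\scr{C}$-valued spaces, which is where the care taken in \cite{Takagi1} in replacing local rings by a finite condition becomes essential.
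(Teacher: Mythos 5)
Your proposal is correct and takes essentially the same route as the paper: your direct verification that $\Spec$ is initial in the comma $2$-category $(\scr{C}\downarrow U)$ --- existence of $\tilde{f}$ by patching the $f$-images of a finite affine cover inside the schematic target $\scr{D}$, uniqueness because morphisms of coherent sites preserve covers and hence, by descent, patchings --- is exactly the content the paper isolates as Proposition \ref{prop:sch:idem} (over a schematic site every $\scr{C}$-scheme is affine, so $\Spec:\scr{D}\to\cat{$\scr{D}$-Sch}$ is an equivalence, furnishing the counit $\Spec^{-1}$ in the paper's unit--counit presentation of the adjunction). Likewise for part (2), the obstacle you single out --- reconciling the finitary support-morphism condition with stalkwise locality of a locally ringed space --- is precisely what the paper proves as Theorem \ref{thm:sch:local:space} (full faithfulness of $\cat{w.$\scr{C}$-Sch} \to \cat{$\scr{C}$-LCS}$), supplemented by the lattice identification $\Omega_{1}(R)\simeq\func{rad}(I(R))$ of Corollary \ref{cor:zar:top}, which is your ``distinguished opens'' step.
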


The universal property of the Zariski topology
is summarized as follows:
\begin{Thm}[Theorem \ref{thm:ideal:top:mod}]
Let $V$ be a self enhancing algebraic type
with a constant operator,
and $W$ be the type of commutative monoid objects
in the category of $V$-algebras.
If the morphism class $\scr{E}$ consists
of localizations of finite type, then
the Zariski topology is the coarsest topology
satisfying the following condition:

For any $W$-algebra $R$ and $R$-module $M$,
the induced $\scr{O}_{\Spec R}$-module $\Shf^{\Zar}(M)$
is zero if and only if $M=0$.
\end{Thm}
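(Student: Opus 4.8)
The plan is to split the biconditional and treat its two implications separately. The implication $M=0\Rightarrow\Shf^{\Zar}(M)=0$ is automatic, since sheafification carries the zero presheaf to the zero sheaf; so the entire content of the condition is the \emph{faithfulness} statement $\Shf^{\Zar}(M)=0\Rightarrow M=0$. I would first verify this faithfulness for the Zariski topology, and then show conversely that faithfulness forces every covering family $\{R\to R_{f_i}\}$ drawn from $\scr{E}$ to generate the unit ideal, i.e. to be a Zariski covering. This identifies the coverings with the Zariski coverings and yields the asserted extremality.

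First I would treat sufficiency. Suppose $\Shf^{\Zar}(M)=0$. Since the associated sheaf vanishes, the separated presheaf attached to $M$ vanishes as well, so every local section is locally zero; in particular each $m\in M=M(\Spec R)$ admits a covering $\{R\to R_{f_i}\}$ with $(f_i)=R$ along which $m$ restricts to $0$, i.e. $m\mapsto 0$ in each $M_{f_i}$. By exactness of the finite-type localizations this means a power $f_i^{n_i}$ annihilates $m$ for every $i$, so $\Ann(m)\supseteq(f_1^{n_1},\dots,f_r^{n_r})$. As $(f_i)=R$ we get $1\in\sqrt{\Ann(m)}$, hence $\Ann(m)=R$ and $m=0$. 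Therefore $M=0$, and the Zariski topology satisfies the condition.

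Next I would establish necessity. Let a coherent site with covering families drawn from $\scr{E}$ satisfy the condition, and let $\{R\to R_{g_j}\}_{j}$ be one of its coverings; put $I=(g_1,\dots,g_s)$. I claim $I=R$. If it were proper, the module $R/I$ would be nonzero, and since each $g_j\in I$ annihilates $R/I$ we would have $(R/I)_{g_j}=0$ for all $j$. Because the family is a genuine covering, its pullback along any $R\to R_h$ again covers $\Spec R_h$, and $\bigl((R/I)_h\bigr)_{g_j}=\bigl((R/I)_{g_j}\bigr)_h=0$; thus every local section of $R/I$ would be locally zero and its sheafification $\Shf(R/I)$ would vanish. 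By the condition this forces $R/I=0$, i.e. $I=R$, a contradiction. Hence every covering generates the unit ideal, so is a Zariski covering, while the sufficiency computation shows every unit-ideal localization family is admissible. The coverings therefore coincide with the Zariski coverings, exhibiting the Zariski topology as the coarsest topology satisfying the condition.

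The arithmetic above (annihilators, the implication $1\in\sqrt{I}\Rightarrow I=R$, and the witness $R/I$) is routine over ordinary rings, and the real work is to carry it out point-free in the self enhancing type $V$, equivalently for $W$-algebras, where $\Spec R$ carries no actual prime ideals. The hard part will be supplying the two structural inputs purely through universal properties: first, the exactness of the finite-type localizations in $\scr{E}$, so that ``restricts to $0$ in $M_{f_i}$'' genuinely means ``killed by a power of $f_i$'' and so that the separation step of sheafification has the locally-zero sections as its kernel; and second, the construction of the quotient $R/I$ together with the identity $(R/I)_{g_j}=0$ in the abstract module category, using only the constant operator and the finite operations available in $V$. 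Granting these --- which is where the self enhancing hypothesis and the finite (coherent) patching of \S 3 enter --- the two steps above go through verbatim.
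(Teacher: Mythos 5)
Your overall decomposition (the trivial implication plus faithfulness of $\Shf^{\Zar}$, then coarseness via a witness module) matches the paper's, and your coarseness half is essentially the paper's own argument: given a covering $\{R\to R[f_i^{-1}]\}_i$ in a topology $\mathcal{O}$ satisfying the condition, the paper also forms the quotient of $R$ collapsing the $f_i$ to $0$ --- precisely $M=R/J$ with $J$ the \emph{congruence} generated by $\{(0,f_i)\}_i$ --- notes that $M$ localizes to zero on each member of the covering, hence $\Shf^{\mathcal{O}}(M)=0$, and concludes $(f_i)_i$ must be the unit ideal. Where you genuinely diverge is the faithfulness direction. The paper argues pointwise: for $0\neq x\in M$ it writes $Rx\cong R/\mathfrak{a}$ for a module congruence $\mathfrak{a}$, enlarges $\mathfrak{a}$ to a maximal congruence $\mathfrak{m}$ (Proposition \ref{prop:congruence:max}, which is where the constant operator and Zorn's lemma enter), extracts the prime ideal $\mathfrak{n}=\{a \mid (a,0)\in\mathfrak{m}\}$, and checks that the stalk of $\Shf^{\Zar}(M)$ at $\mathfrak{n}$ is nonzero. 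Your annihilator argument is point-free and avoids maximal congruences entirely; it needs only that $m\mapsto 0$ in $M_{f_i}$ iff $f_i^{n_i}m=0$ (immediate from the explicit localization), that $\Ann(m)$ is an ideal (this requires $\phi(0,\dots,0)=0$ for derived operators $\phi$, which holds in the self enhancing setting but deserves a line), and the power trick $(f_i)_i=1\Rightarrow(f_i^{n_i})_i=1$, which is exactly the computation via Lemmas \ref{lem:ideal:derive:op} and \ref{lem:equiv:dlat} already used in the paper's covering theorem in \S 4. So your route buys a choice-free proof that recycles the \S 4 ideal arithmetic, while the paper's buys slightly more information (an actual point of $\Spec^{\Zar}R$ at which the module survives).

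The one place you are too optimistic is the closing claim that, once $R/I$ is constructed abstractly, the argument ``goes through verbatim.'' In a general algebraic type the object you must use is the quotient by the congruence generated by $\{(0,g_j)\}_j$, and the kernel ideal of that quotient can be strictly larger than $I$; so the step ``$I$ proper $\Rightarrow R/I\neq 0$'' is precisely the non-routine ideals-versus-congruences issue that \S 5.1 is about, not bookkeeping. For instance, over idempotent semirings, collapsing a proper ideal can collapse everything: if $1+u\in I$ for some $u\in I$, then $1\equiv 1+u\equiv 0$ in the generated congruence, even though $1\notin I$. The paper's own proof asserts ``$M=R/J$ is not a zero $R$-module'' without justification, so you are in the same position as the paper here; but you should identify this nonvanishing --- where the constant-operator hypothesis, and plausibly the descent condition built into Definition \ref{def:groth:top}, must do real work --- as the actual remaining obligation rather than as routine point-free translation.
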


Note that this paper is only a stepping stone
to various generalizations.
However, we decided to rewrite it from the first step:
since although the scheme theory is already a classical topic,
its universal property is scarcely discussed,
and we need it to be clarified to
apply its machinary to other workfields.

This paper is organized as follows:
in section \S 2, we give a brief summary of
algebraic types and lattice theories.
The reader may skip this section, if he or she
is familiar with the terminology.

In \S 3, we discuss the universal property of
$\scr{C}$-schemes: namely it is given by the
universal property of schematic topological category
under $\scr{C}$.
In the process, we introduce the notion
of weak $\scr{C}$-schemes, which is the generalization
of $\scr{A}$-schemes.
Using this, we obtain the spectrum functor
as an adjoint of the global section functor.

In \S 4, we compare our notion of $\scr{C}$-schemes
with the conventional coherent schemes.
This is somewhat time-consuming, since
the definitions of the two are very different.
Here, we also show the correspondence between
the ideals and the Grothendieck topology.

Finally in \S 5, we discuss the universal
property of ideal topology.
Note that the ground of choosing the
ideal topology is only given by looking
at modules.
When we talk of algebraic objects other than
rings, we know that congruences take place of ideals.
However, the result obtained here shows that
this is a groundless fear, since the
ideal topology detects modules similarly
as in the case of rings.

\textbf{Notation and Conventions:}
In the sequel, we assume the reader
to be familiar with category-theoretic
languages; the textbook \cite{CWM} will be sufficient
for this purpose.
We fix a universe, and do not mention it
when unnecessary.
Any ring is unital,
and $\NNN$ is the set of non-negative rational integers.

\textbf{Acknowledgements:}
We would like to thank H. Minamoto for
valuable discussions.
I was partially supported by the Grant-in Aid for
Young Scientists (B) \# 23740017.

\section{Preliminaries}

\subsection{Algebras}

Here, we begin with the definition
of algebraic types.
Conventionally, an algebraic type
is a pair $\langle \Omega,E \rangle$
of the set of operators and
equational classes.
However, to define equational classes
(and therefore, derived operators) explicitly,
it requires some terminology of
\textit{trees}, or \textit{languages}.
To emphasize the relation between
algebras and category theories,
we decided to describe these in categorical terms,
as below. 

\begin{Def}
\begin{enumerate}
\item
Let $\cat{Cat$^{\times}$}$ be the category
of small categories with finite products
(here, we include null products,
namely we also assume the existence of 
the terminal object)
and product-preserving functors.
We have a left adjoint $\func{free}$ of the underlying functor
$\obj:\cat{Cat$^{\times}$} \to \cat{Set}$
which sends a small category $\scr{C}$
to its object set $\obj(\scr{C})$.
\item
We set $\scr{X}=\func{free}(*)$.
This is actually equivalent to
the opposite category of finite sets and maps.
We denote by $[n] \in \obj(\scr{X})$
the set $\{1,2,\cdots, n\}$.
\item 
An object in the overcategory $\cat{Set $\downarrow\NNN $}$
is called a \textit{graded set}.

For an object $a:\Omega \to \NNN$
in $\cat{Set$\downarrow\NNN $}$, 
we denote by $\Omega_{n}$ for the
inverse image $a^{-1}(n)$ for a non-negative integer $n$.
\end{enumerate}
\end{Def}

\begin{Prop}
Let $U:\cat{$\scr{X} \downarrow $Cat$^{\times}$}
\to \cat{Set $\downarrow \NNN $}$
be a functor defined by
\[
[\scr{X} \stackrel{j}{\to} \scr{C}]
\mapsto [\amalg_{n \in \NNN}\scr{C}(j[n],j[1]) \to \NNN],
\]
where the latter map is determined
by sending elements of $\scr{C}(j[n],j[1])$ to $n$.
A morphism $\scr{C} \to \scr{D}$ in 
$\cat{$\scr{X} \downarrow $Cat$^{\times}$}$
canonically gives a map
$\amalg_{n \in \NNN}\scr{C}(j[n],j[1])
\to \amalg_{n \in \NNN}\scr{D}(j[n],j[1])$ over $\NNN$.
Later, we omit $j$ if there seems to be no confusion.
Then, $U$ admits a left adjoint.
\end{Prop}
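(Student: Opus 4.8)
The plan is to construct the left adjoint to $U$ explicitly by a free construction, mirroring the way one builds a free algebra: given a graded set $a:\Omega\to\NNN$, I want to produce a category with finite products $F(\Omega)$ equipped with a map from $\scr{X}$, whose operations of arity $n$ (that is, morphisms $[n]\to[1]$) are freely generated by the elements of $\Omega_n$. The key observation is that $\scr{X}$ is the opposite of finite sets, so $j[n]$ should be thought of as an $n$-fold product $j[1]^{\times n}$, and a morphism $j[n]\to j[1]$ is precisely an $n$-ary operation. The left adjoint must therefore formalize ``the category of $n$-ary derived operations freely built from the generating operations in $\Omega$.''

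First I would fix the object structure: since $\func{free}$ is left adjoint to $\obj$ and $\scr{X}=\func{free}(*)$ has object set $\NNN$ (the objects $[n]$), the object set of $F(\Omega)$ must again be $\NNN$, forced by the requirement that $\scr{X}\to F(\Omega)$ be identity on objects and that finite products be preserved, so $[n]$ is literally the $n$-fold product of $[1]$. Next I would describe the morphisms as \emph{terms}: a morphism $[m]\to[n]$ should be an $n$-tuple of derived $m$-ary operations, where a derived operation is an element of the set of formal terms built inductively from the $m$ projection variables $x_1,\dots,x_m$ and the operators $\omega\in\Omega_k$ applied to $k$ previously-formed terms. This is exactly the free clone (or Lawvere theory) on the graded set $\Omega$. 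Composition is substitution of terms, products are given by tupling, and the structural map $\scr{X}\to F(\Omega)$ sends the generating maps of $\scr{X}$ (projections and diagonals) to the evident term-tuples; one checks this is product-preserving, so $F(\Omega)$ lands in $\cat{$\scr{X}\downarrow$Cat$^\times$}$.

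Then I would verify the adjunction $\Hom(F(\Omega),\scr{C})\cong\Hom_{\cat{Set}\downarrow\NNN}(\Omega,U\scr{C})$. The unit $\Omega\to U F(\Omega)$ sends each $\omega\in\Omega_n$ to the corresponding generating term in $F(\Omega)([n],[1])$. Given any graded-set map $\Omega\to U\scr{C}$ over $\NNN$, i.e.\ an assignment of each $n$-ary operator to an actual morphism $j[n]\to j[1]$ in $\scr{C}$, there is a unique product-preserving, identity-on-objects functor $F(\Omega)\to\scr{C}$ under $\scr{X}$ extending it: uniqueness and existence both follow by induction on term complexity, since a product-preserving functor must send projections to projections and must respect the inductive clause $\omega(t_1,\dots,t_k)\mapsto \omega^{\scr{C}}\circ\langle \bar t_1,\dots,\bar t_k\rangle$. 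Naturality in both variables is then routine.

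The step I expect to be the main obstacle is the careful bookkeeping needed to show that the ``under $\scr{X}$'' condition rigidifies everything correctly: one must confirm that requiring compatibility with the fixed map $j:\scr{X}\to\scr{C}$ forces the extension to treat $[n]$ as a genuine product and projections as genuine projections, so that the free term algebra really is the universal solution rather than merely a skeleton. Equivalently, one must check that $U$ as defined only records the operation-sets $\scr{C}(j[n],j[1])$ and that the forgotten data (arbitrary hom-sets $\scr{C}([m],[n])$) is recoverable freely via products; this relies on the fact that in any object of $\cat{$\scr{X}\downarrow$Cat$^\times$}$ every hom-set decomposes as $\scr{C}([m],[n])\cong\scr{C}([m],[1])^{n}$ by the universal property of products, which pins down the entire categorical structure from the graded set of basic operations alone.
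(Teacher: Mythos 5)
Your proposal is correct, but it proves the statement by a genuinely different method from the paper. The paper never constructs $\func{free}(\Omega)$ explicitly: it invokes Freyd's adjoint functor theorem, observing that $\cat{Cat$^{\times}$}$ (and hence $\cat{$\scr{X} \downarrow $Cat$^{\times}$}$) is small complete, and then verifying the solution set condition --- any graded-set map $f:\Omega \to \amalg_{n}\scr{C}(j[n],j[1])$ factors through the smallest subcategory $\scr{C}_{0} \subset \scr{C}$ that is still an object of $\cat{$\scr{X} \downarrow $Cat$^{\times}$}$, and the collection of such $\scr{C}_{0}$'s modulo equivalence is small. You instead build the left adjoint concretely as the free Lawvere theory (free clone) on the signature $\Omega$: objects $\NNN$, morphisms $[m]\to[n]$ the $n$-tuples of formal terms in $m$ variables, composition by substitution, with the adjunction verified by induction on term complexity. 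The paper's route is shorter, avoids all syntactic bookkeeping, and is recycled immediately afterwards (the same appeal to Freyd's theorem produces the initial object $\scr{X}_{V}$ of $\cat{$V\downarrow $Cat$^{\times}$}$), but it is non-constructive; your route costs exactly the bookkeeping you flag (well-definedness of substitution, that tupling really gives products, that the under-$\scr{X}$ condition pins down extensions on projections via $\scr{C}(j[m],j[n])\cong\scr{C}(j[m],j[1])^{n}$), but it pays for itself by exhibiting the derived operators $D(\Omega)=\amalg_{n}\func{free}(\Omega)([n],[1])$ as honest syntactic terms, which is precisely the classical picture the paper wants to recover categorically. Two small corrections to your wording: the extension $F(\Omega)\to\scr{C}$ is not ``identity-on-objects'' --- $\scr{C}$ generally has objects other than the $j[n]$ --- rather its values on objects, $[n]\mapsto j[n]$, are forced by commutation with the structural functors; and the object set of $F(\Omega)$ is not ``forced'' to be $\NNN$, since adjoints are unique only up to isomorphism --- it is simply a legitimate choice, justified because $\NNN$ is already closed under the needed finite products.
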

\begin{proof}
It is straightforward to see that $\cat{Cat$^{\times}$}$
is small complete.
Therefore, it remains to show that
the solution set condition is satisfied 
to apply Freyd's adjoint functor theorem.
Suppose $\Omega \to \NNN$ is an object of 
$\cat{Set$\downarrow \NNN $}$
and $\scr{X} \to \scr{C}$ an object
of $\cat{$\scr{X} \downarrow $Cat$^{\times}$}$
with a map $f:\Omega \to \amalg_{n}\scr{C}([n],[1])$
over $\NNN$.
Then there exists the smallest subcategory
$\scr{C}_{0}$ of $\scr{C}$ which is
an object of $\cat{$\scr{X} \downarrow $Cat$^{\times}$}$,
and $f$ factors through $\amalg_{n}\scr{C}_{0}([n],[1])$.
It is clear that the set of these $\scr{C}_{0}$'s
$\{\scr{C}_{0}\}_{\scr{C} \in 
\cat{$\scr{X} \downarrow $Cat$^{\times}$}}$
modulo equivalence is small.
Hence, the solution set condition is satisfied
and we have the left adjoint of $U$.
\end{proof}

\begin{Def}
\begin{enumerate}
\item We denote again by $\func{free}$
the left adjoint functor
$\cat{Set $\downarrow \NNN $} \to
\cat{$\scr{X} \downarrow $Cat$^{\times}$}$ 
obtained in the above proposition.
For a graded set $\Omega$,
$\func{free}(\Omega)$
is the \textit{free algebraic type}
generated by $\Omega$.
The graded set
\[
D(\Omega)=\amalg_{n \in \NNN}
\func{free}(\Omega)([n],[1]) \to \NNN
\]
is the set of \textit{derived operators}
generated by $\Omega$.

\item
A \textit{\rom{(}finitary\rom{)} algebraic type} is 
a pair $\langle\Omega,E\rangle$, where
\begin{enumerate}
\item $\Omega$ is a graded set, and
\item $E=(E_{n})_{n}$ is a sequence
of sets indexed by $\NNN$,
and $E_{n}$ is an equivalence relation on $D(\Omega)_{n}$.
\end{enumerate}
$\Omega$ is called the set of \textit{operators}
of $V$ and the elements in $\Omega_{n}$
are called \textit{$n$-ary operators}.
$E$ is called the \textit{equational class} of $V$.

\item
For a finitary algebraic type $V=\langle\Omega,E\rangle$,
let $\cat{$V\downarrow $Cat$^{\times}$}$
be the full subcategory of 
$\cat{$\func{free}(\Omega) \downarrow $Cat$^{\times}$}$,
consisting of objects $f:\func{free}(\Omega) \to \scr{C}$
such that $f(\varphi)=f(\psi)$
for any $(\varphi,\psi) \in E_{n}$.
The existence of the initial object in 
$\cat{$V\downarrow $Cat$^{\times}$}$
can be shown by applying Freyd's adjoint theorem again,
and is denoted by $\scr{X}_{V}$.

\item
A morphism $V \to W$ of algebraic types is
a morphism $\scr{X}_{V} \to \scr{X}_{W}$
in $\cat{$\scr{X}\downarrow$Cat$^{\times}$}$.


\item For any algebraic type $V$ and a
symmetric monoidal category $(\scr{C},\otimes)$,
a \textit{$V$-algebra} in $\scr{C}$ is a morphism
$A:V \to \scr{C}$ of monoidal categories.
We often just indicate $A([1]) \in \scr{C}$ for $A$,
if there seems to be no ambiguity.

A \textit{morphism of $V$-algebras} in $\scr{C}$
is a $\otimes$-preserving natural transformation.
We denote by $\cat{$V$-alg($\scr{C}$)}$
the category of $V$-algebras in $\scr{C}$.
\item When $\scr{C}$ is the category $\cat{Set}$ of small sets, 
regarded as a cartesian closed category,
then we omit the indication of $\scr{C}$.
\end{enumerate}
\end{Def}

\begin{Rmk}
\begin{enumerate}
\item To present a fixed type of algebra
as a functor is already realized by Lawvere,
known as the \textit{Lawvere theory}.
Moreover, it is true that this definition
of algebras is quite complicated,
and category theorists would rather treat
finitary algebraic category instead;
see Definition \ref{def:finitary:alg} below.
It is also true that the following construction
of scheme theories is available 
in this generalization \cite{Durov}.
However, here we reviewed the classical definition
of algebras (Lawvere theory and finitary algebraic categories
are generalizations of algebras, but are not equivalent),
since anyway we need to clarify that
these generalizations are indeed a generalization
of algebras.
\item The reader should be aware that,
when the monoidal structure $\otimes$
of a symmetric monoidal category $\scr{C}$
is not Cartesian, the notion of $V$-algebras in $\scr{C}$
introduced above may be different from the
conventional one.
For example, if $\scr{C}$ is the category
of abelian groups with the tensor structure
and $V$ is the algebraic type of monoids,
then $\cat{$V$-alg($\scr{C}$)}$ does \textit{not}
coincide with the category of unital associative rings,
since a $V$-algebra $A$ in $\scr{C}$ 
in the above sense is also required
to have first and second projections
$A \otimes A \to A$, the diagonal mapping $A \to A \otimes A$,
and so on.

This also shows that some of the algebraic type
cannot be realized, using the modern fashion of 
operads\cite{Markl}:
for example, the algebraic type of groups
cannot be described as an operad,
since the inverse law $aa^{-1}=1$ requires
multiple times of references to one variable $a$.
\end{enumerate}
\end{Rmk}

\begin{Def}
\label{def:finitary:alg}
\begin{enumerate}
\item
Let $F: \scr{X} \rightleftarrows \scr{A}:U$
be an adjunction between two categories,
and set $T=UF:\scr{X} \to \scr{X}$.
Let $\scr{X}^{T}$ be the category of $T$-algebras,
namely objects $X$ in $\scr{X}$ with
a $T$-action $TX \to X$ and $T$-equivariant morphisms.
We have a natural functor $K:\scr{A} \to \scr{X}^{T}$,
called the \textit{comparison functor}.
The above adjoint is \textit{monadic},
if $K$ is an equivalence of categories.
\item A \textit{finitary algebraic category}
is a category $\scr{A}$ equipped with a adjunction
$\cat{Set} \rightleftarrows \scr{A}$ which is monadic.
\end{enumerate}
\end{Def}

Let $f:V \to W$ be a morphism of algebraic types.
Then, $f$ induces an underlying functor
$U:\cat{$W$-alg} \to \cat{$V$-alg}$ which
has a left adjoint $\func{free}$.
In particular,
we have an adjoint $\func{free}:\cat{Set} \rightleftarrows \cat{$V$-alg}$
for any algebraic type $V$, and this is monadic.
Consequently, $\cat{$V$-alg}$
is small complete and small co-complete.
We we refer to the funtor $\func{free}$ as
the \textit{free generator}.

\subsection{Lattice theories}
In this section, we give a brief summary
of lattice theories.
This also emphasizes the fact that restricting to coherent schemes 
is somewhat better than considering all schemes.

\begin{Def}
A topological space $X$ is \textit{sober},
if any irreducible closed subset has a unique generic point.
$X$ is \textit{quasi-separated}
(\cite{EGA4}, Proposition 1.2.7), if the intersection of any
two quasi-compact open subset of $X$ is again quasi-compact.
$X$ is \textit{coherent}, if it is sober,
quasi-compact, quasi-separated, and
admits a quasi-compact open basis.
\end{Def}

\begin{Def}
A poset $(L, \leq)$ is a \textit{distributive lattice}, if 
\begin{enumerate}[(a)]
\item any two elements of $a,b$ of $L$ admit
a \textit{join} $a \vee b$, namely the supremum of $\{a,b\}$,
and a \textit{meet} $a \wedge b$,
namely the infimum of $\{a,b\}$,
\item $L$ has a unique maximal (resp. minimal) element
$1$ (resp. $0$), and
\item distribution law holds:
$x \wedge (y \vee z)=(x \wedge y)\vee(x \wedge z)$.
\end{enumerate}
A distributive lattice $L$ is \textit{complete},
if the supremum is defined for any subset of $L$,
and infinite distribution law holds.
\end{Def}
In fact, a distributive lattice can be regarded as a semiring:
recall that, an element $a$ of a monoid $M$
is \textit{absorbing}, if $ax=a$ for any $x \in M$.
\begin{Def}
A 5-uple $R=(R,+,\times, 0,1)$ is an \textit{\rom{(}idempotent\rom{)} semiring} if
\begin{enumerate}[(a)]
\item $R$ is a set, $+,\times$ are two binary operators
on $R$, and $0,1$ are two elements of $R$,
\item $(R, +, 0)$ is a commutative idempotent monoid,
and $(R,\times,1)$ is a commutative monoid,
\item $0$ (resp. $1$)
is an absorbing element with respect to the multiplication
(resp. the addition), and
\item distribution law holds.
\end{enumerate}
\end{Def}
Let $R$ be a semiring.
Then, $R$ can be regarded as a poset by setting
\[
a \leq b \Leftrightarrow a+b=b.
\]
Then $a+b$ gives the supremum of $a,b$ with respect
to this order, and $0,1$ become the maximum
and the minimum of $R$, respectively.
In fact,
\begin{Prop}
To give a distributive lattice is equivalent
to giving a semiring with idempotent multiplication.
\end{Prop}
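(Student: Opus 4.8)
The plan is to exhibit two explicit, mutually inverse assignments between the two kinds of structure and to check that they respect morphisms, so that the asserted equivalence is in fact an isomorphism of categories. Given a distributive lattice $(L,\le)$, I would set $+=\vee$ and $\times=\wedge$ and keep the same $0,1$. The monoid axioms demanded of a semiring are then precisely associativity, commutativity and unitality of $\vee$ and $\wedge$; additive idempotency is $a\vee a=a$; the absorbing laws are $a\vee 1=1$ and $a\wedge 0=0$; and the semiring distribution law is exactly the lattice axiom $x\wedge(y\vee z)=(x\wedge y)\vee(x\wedge z)$. Since $\wedge$ is idempotent as well, this produces a semiring with idempotent multiplication. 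Conversely, given an idempotent semiring $R$ with idempotent multiplication, I would equip it with the order $a\le b\iff a+b=b$ introduced just before the proposition; there it is already recorded that $+$ computes suprema and that $0,1$ are the extremal elements, so the only remaining point is to identify the product with the meet.

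The heart of the argument is thus to show $ab=a\wedge b$. For the lower-bound part I would use that $1$ absorbs addition together with distributivity to compute
\[
ab+a=ab+a\cdot 1=a(b+1)=a\cdot 1=a,
\]
so that $ab\le a$, and symmetrically $ab\le b$. For the greatest-lower-bound part, suppose $c\le a$ and $c\le b$, that is $c+a=a$ and $c+b=b$; expanding the product and invoking idempotency of multiplication ($cc=c$) gives
\[
ab=(c+a)(c+b)=cc+cb+ac+ab=c+cb+ac+ab,
\]
whence, by idempotency of addition, $c+ab=c+c+cb+ac+ab=ab$, i.e. $c\le ab$. Hence $ab$ is the infimum of $\{a,b\}$. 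I expect this computation --- specifically the greatest-lower-bound step, where idempotency of $\times$ and the absorbing behaviour of the units must be combined in the right order --- to be the main obstacle; it is exactly the place where the hypothesis of \emph{idempotent} multiplication is indispensable, for without it $\times$ need not even satisfy $a\wedge a=a$.

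With $+=\vee$ and $\times=\wedge$ established, the poset $(R,\le)$ is a lattice: distributivity transfers verbatim from the semiring distribution law, and $0,1$ serve as the least and greatest elements, so $(R,\le)$ is a distributive lattice. It then remains to check that the two passages are mutually inverse and functorial. Starting from a lattice, forming its semiring and reading off the induced order returns the relation $a\vee b=b$, which is the original $\le$; starting from a semiring, forming its poset and taking $\sup$ and $\inf$ returns $+$ and $\times$ by the computations above. Finally, a map preserves $\vee,\wedge,0,1$ if and only if it preserves $+,\times,0,1$, so lattice homomorphisms and semiring homomorphisms coincide under the correspondence, yielding the asserted equivalence --- indeed an isomorphism --- of categories.
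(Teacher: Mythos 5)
Your proof is correct and follows the same approach as the paper, which simply notes that one passes between the two structures by the dictionary $+\leftrightarrow\vee$, $\times\leftrightarrow\wedge$ (the paper leaves all verification implicit). Your added details --- in particular the computation showing $ab=a\wedge b$, where idempotency of multiplication enters --- are exactly the checks the paper's one-line proof suppresses.
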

Indeed, if we are given a semiring $R$
with idempotent multiplication,
then we may replace $+$ by $\vee$ and
$\times$ by $\wedge$.

In particular, a distributive lattice can be regarded
as an algebra.
Let $\cat{DLat}$ (resp. $\cat{CDLat}$) be the category
of distributive lattices (resp. complete distributive lattices)
and their homomorphisms.
These categories are finitary algebraic categories.
As a consequence, $\cat{DLat}$ is small complete
and small co-complete.

The underlying functor $U:\cat{CDLat} \to \cat{DLat}$
admits a left adjoint $\func{comp}$, defined as follows:
for a distributive lattice $L$, $\func{comp}(L)$
is the set of all non-empty \textit{lower sets} of $L$:
\[
\func{comp}(L)=\{ \emptyset \neq S \subset L \mid 
x \leq y,\ y\in S \Rightarrow x \in S\}.
\] 
$S \vee T$ (resp. $S \wedge T$) is defined
as the lower set generated by $s \vee t$ (resp. $s \wedge t$)
for $s \in S$, $t \in T$ respectively.

Let $\mathbf{1}$ be the initial object in $\cat{DLat}$.
This is the simplest Boolean lattice $\{0,1\}$.
\begin{Def}
\begin{enumerate}
\item
For a topological space $X$,
$\Omega(X)$ is the set of open subsets of $X$.
This becomes a complete distributive lattice
via setting $\vee=\cup$ and $\wedge=\cap$,
and a continuous map $f:X \to Y$ between
two topological spaces induces a lattice homomorphism
$f^{-1}:\Omega(Y) \to \Omega(X)$.
Therefore, we have a contravariant functor
$\Omega:\cat{Top}^{\op} \to \cat{CDLat}$.
\item
Conversely, for a complete distributive lattice
$L$, let $\func{pt}(L)$ be the set of
homomorphisms $L \to \mathbf{1}$.
This has a natural topology, the open set of which
is of the form
\[
\phi(a)=\{ p \in \func{pt}(L) \mid p(a)=1\}
\]
where $a \in L$.
Then $\func{pt}(L)$ becomes a sober space.
If we denote the category of sober spaces by $\cat{Sob}$,
then we have a contravariant functor 
$\func{pt}:\cat{CDLat} \to \cat{Sob}^{\op}$.
\end{enumerate}
\end{Def}
The functor $\Omega$ is not essentially surjective.
The objects in the image category of $\Omega$
is called \textit{spatial}, and we denote by
$\cat{SCDLat}$ the full subcategory of $\cat{CDLat}$
consisting of spatial complete distributive lattices.
It turns out that $\Omega$ and $\func{pt}$
gives an equivalence of categories 
$\cat{Sob}^{\op} \simeq \cat{SCDLat}$.
This also gives the right adjoint of the
underlying functor $\cat{Sob} \to \cat{Top}$,
namely the \textit{soberification} $\func{pt}\Omega$,
and the left adjoint of $\cat{SCDLat} \to \cat{CDLat}$.
Summarizing, we have a commutative diagram of adjunctions:
\[
\xymatrix{
\cat{Top}^{\op} \ar@<.5ex>[r] \ar[d]_{\Omega} & 
\cat{Sob}^{\op} \ar@<.5ex>[l]^{U} \ar@{<->}[d]_{\Omega}^{\simeq} \\
\cat{CDLat} \ar@<.5ex>[r] &
\cat{SCDLat} \ar@<.5ex>[l]^{U}
}
\]
Let us see for distributive lattices.
The correspondence between the spaces
and lattices becomes more clear in this case.

For a distributive lattice $L$, $\func{comp}(L)$
turns out to \textit{have enough points},
namely, $\func{comp}(L)$ is spatial.
This is a consequence of a more general statement:
\begin{Prop}
Let $R$ be a semiring,
and $I$ an ideal of $R$ which is not the unit ideal.
Then, there exists a maximal ideal containing $I$.
\end{Prop}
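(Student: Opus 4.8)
The plan is to run the standard Zorn's-lemma argument, transported to the semiring setting. First I would record the basic characterization of the unit ideal: for an ideal $J \subseteq R$, one has $J = R$ if and only if $1 \in J$, since the multiplicative identity forces $r = r \times 1 \in J$ for every $r \in R$ once $1 \in J$. In particular, the hypothesis that $I$ is not the unit ideal is equivalent to $1 \notin I$, and more generally an ideal is \emph{proper} exactly when it omits $1$.

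Next I would form the poset $\Sigma$ of all proper ideals of $R$ containing $I$, ordered by inclusion. This is nonempty because $I \in \Sigma$ by hypothesis. The goal is to produce a maximal element of $\Sigma$; any such element is automatically a maximal ideal of $R$, because an ideal strictly containing a member of $\Sigma$ while still being proper would again lie in $\Sigma$, contradicting maximality there.

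To apply Zorn's lemma I would verify the chain condition. Given a chain $\{J_{\alpha}\}$ in $\Sigma$, set $J = \bigcup_{\alpha} J_{\alpha}$. One checks routinely that $J$ is again an ideal: it contains $0$; if $a \in J_{\alpha}$ and $b \in J_{\beta}$, then comparing $\alpha,\beta$ in the chain places both in the larger of the two, whence $a + b \in J$; and $ra \in J_{\alpha} \subseteq J$ for any $r \in R$. The decisive point is that $J$ remains proper: were $1 \in J$, then $1$ would lie in some single $J_{\alpha}$, contradicting the properness of $J_{\alpha}$. Hence $J \in \Sigma$ is an upper bound for the chain, the empty chain being bounded by $I$ itself. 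Zorn's lemma then yields a maximal element of $\Sigma$, which is the desired maximal ideal containing $I$.

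The argument is essentially routine, and the only place demanding genuine care is the preservation of properness under passage to the union of a chain; this is exactly where the single-element characterization $J = R \Leftrightarrow 1 \in J$ does the work. It is worth noting in passing that the idempotence of the addition plays no role anywhere in this proof, so the same reasoning applies verbatim to any semiring in the sense fixed above.
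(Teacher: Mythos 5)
Your proof is correct: the paper states this proposition without proof, and your Zorn's-lemma argument (a union of a chain of proper ideals is again an ideal, and it stays proper because properness is detected by the single condition $1 \notin J$) is exactly the standard argument the paper implicitly relies on. Your closing remark is also accurate: neither the idempotence of $+$ nor the additive absorbency of $1$ in the paper's definition of semiring is used anywhere, so the statement and proof hold for arbitrary unital semirings.
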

Actually, we use this proposition when
we prove the existence of a maximal ideal of a given ring.

When $R$ is a distributive lattice,
the ideals of $R$ correspond to the lower sets of $R$.

\begin{Thm}[Stone duality]
\begin{enumerate}
\item
For a distributive lattice $L$,
$\func{pt}\func{comp}(L)$ becomes a coherent space,
and a homomorphism $L \to M$
induces a quasi-compact morphism 
$\func{pt}\func{comp}(M) \to \func{pt}\func{comp}(L)$.
Hence, we have a functor $\func{pt}\func{comp}:\cat{DLat}
\to \cat{Coh}^{\op}$.
\item
Conversely, for a coherent space $X$,
let $\Omega_{c}(X)$ be the set of quasi-compact open subsets
of $X$.
Then, $\Omega_{c}(X)$ becomes a distributive lattice and
$\Omega_{c}$ induces a functor
$\cat{Coh}^{\op} \to \cat{DLat}$.
This gives the inverse of $\func{pt}\func{comp}$,
and hence an equivalence of categories.
\end{enumerate}
\end{Thm}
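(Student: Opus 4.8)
The plan is to make both functors explicit on underlying sets via the prime ideal theorem, and then to exhibit the two natural isomorphisms $\Omega_{c}\circ\func{pt}\func{comp}\cong\Id$ and $\func{pt}\func{comp}\circ\Omega_{c}\cong\Id$. First I would pin down the points. Since $\func{comp}$ is left adjoint to the underlying functor $U:\cat{CDLat}\to\cat{DLat}$ and $\mathbf{1}$ is finite, hence complete, the adjunction gives a natural bijection
\[
\func{pt}\func{comp}(L)=\Hom_{\cat{CDLat}}(\func{comp}(L),\mathbf{1})\cong\Hom_{\cat{DLat}}(L,\mathbf{1}),
\]
so a point is just a lattice homomorphism $p:L\to\mathbf{1}$, equivalently a prime ideal $p^{-1}(0)\subset L$. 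Writing any lower set as the join of the principal lower sets it contains and using that a point of $\func{comp}(L)$ preserves arbitrary joins, one gets $\phi(S)=\bigcup_{a\in S}D(a)$, where $D(a)=\{p\mid p(a)=1\}$. Thus the $D(a)$ form a basis, with $D(a)\cap D(b)=D(a\wedge b)$, $D(a)\cup D(b)=D(a\vee b)$, $D(1)=\func{pt}\func{comp}(L)$ and $D(0)=\emptyset$.

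Next I would prove coherence of $X_{L}:=\func{pt}\func{comp}(L)$. Sobriety is automatic since $\func{pt}$ lands in $\cat{Sob}$. For the remaining conditions the key tool is a separation statement: if $a\not\leq b$ in $L$, then the principal ideal $\langle b\rangle$ is disjoint from the principal filter $\uparrow a$, and applying the cited maximal-ideal proposition (more precisely the prime-ideal separation theorem it yields, by a Zorn argument of the same type) produces a prime ideal containing $b$ but not $a$, i.e.\ a point of $D(a)\setminus D(b)$. This shows simultaneously that $a\mapsto D(a)$ is injective and that each $D(a)$ is quasi-compact: given a basic cover $D(a)=\bigcup_{i}D(b_{i})$ with $b_{i}\leq a$, if no finite subcover existed then $a$ would lie outside the ideal generated by the $b_{i}$, and the same proposition yields a point of $D(a)$ missing every $D(b_{i})$, a contradiction. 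Quasi-compactness of $X_{L}$ is the case $a=1$, quasi-separatedness follows from $D(a)\cap D(b)=D(a\wedge b)$, and the $D(a)$ are the required quasi-compact open basis; functoriality and quasi-compactness of the induced maps are immediate from $(f^{*})^{-1}(D(a))=D(f(a))$. This proves (1), and the computation shows $a\mapsto D(a)$ is a lattice isomorphism $L\xrightarrow{\sim}\Omega_{c}(X_{L})$, every quasi-compact open being a finite union $\bigcup D(a_{i})=D(\bigvee a_{i})$; this is one of the two natural isomorphisms.

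For (2) I would first check that $\Omega_{c}(X)$ is a distributive lattice: it is closed under finite unions and, by quasi-separatedness, under finite intersections, with $0=\emptyset$ and $1=X$, and distributivity is inherited from $\Omega(X)$; a quasi-compact map $f$ induces a lattice homomorphism $f^{-1}:\Omega_{c}(Y)\to\Omega_{c}(X)$, giving the functor $\Omega_{c}:\cat{Coh}^{\op}\to\cat{DLat}$. The candidate for the remaining isomorphism sends $x\in X$ to the prime ideal $\mathfrak{p}_{x}=\{U\in\Omega_{c}(X)\mid x\notin U\}$. It is continuous and injective, identifying the basis $\{U\}$ of $X$ with the basic opens $\{D(U)\}$ via $\mathfrak{p}_{x}^{-1}(D(U))=U$ together with the $T_{0}$ property coming from sobriety and the quasi-compact open basis; hence it is a homeomorphism onto its image.

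The real work, and the main obstacle, is surjectivity: every prime ideal $\mathfrak{p}$ of $\Omega_{c}(X)$ must arise from a point. Here I would set $Z=X\setminus\bigcup_{U\in\mathfrak{p}}U$ and prove, using quasi-compactness, that for a quasi-compact open $U$ one has $U\cap Z\neq\emptyset$ iff $U\notin\mathfrak{p}$ (if $U$ were covered by members of $\mathfrak{p}$, a finite subcover would force $U\in\mathfrak{p}$). Taking $U=X$ shows $Z\neq\emptyset$ by properness of $\mathfrak{p}$, and primeness of $\mathfrak{p}$ translates exactly into irreducibility of $Z$. Sobriety then supplies a unique generic point $x$ of $Z$, and the equivalence $x\in U\Leftrightarrow U\cap Z\neq\emptyset$ gives $\mathfrak{p}_{x}=\mathfrak{p}$. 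Naturality of both isomorphisms in $L$ and in $X$ is a routine diagram chase, and together they yield the asserted equivalence $\cat{DLat}\simeq\cat{Coh}^{\op}$.
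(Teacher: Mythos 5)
Your proof is correct, but there is nothing in the paper to compare it against: the theorem is stated in the preliminaries as classical Stone duality, without proof, the paper recording only the surrounding ingredients (the adjunction $\func{comp}\dashv U$, sobriety of $\func{pt}$, and the maximal-ideal proposition for semirings). Your argument is essentially the standard one, and it uses those stated ingredients legitimately: the adjunction identifies $\func{pt}\func{comp}(L)$ with $\Hom_{\cat{DLat}}(L,\mathbf{1})$, i.e.\ with prime ideals of $L$; the sets $D(a)$ are then shown to be exactly the quasi-compact opens, giving both coherence of the space and the isomorphism $L\cong\Omega_{c}(\func{pt}\func{comp}(L))$; and a prime ideal $\mathfrak{p}$ of $\Omega_{c}(X)$ is realized by a point as the generic point of the irreducible closed set $Z=X\setminus\bigcup_{U\in\mathfrak{p}}U$, which is precisely where sobriety and the quasi-compact basis of $X$ are consumed. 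Two details you flagged are indeed the only places requiring care, and you handled both: (i) the paper's proposition only yields a maximal ideal containing a given ideal, whereas the separation arguments need an ideal maximal among those disjoint from a given filter (prime by distributivity) — as you say, the same Zorn argument gives this; (ii) in the surjectivity step, the equivalence $U\cap Z\neq\emptyset\Leftrightarrow U\notin\mathfrak{p}$ for quasi-compact $U$ depends on passing to a finite subcover and on $\mathfrak{p}$ being a lower set closed under finite joins, after which primeness of $\mathfrak{p}$ is exactly irreducibility of $Z$ tested on the basis of quasi-compact opens. One cosmetic caveat: the paper's definition of $\func{comp}(L)$ as \emph{all} non-empty lower sets makes its join operation fail idempotency; your proof implicitly (and correctly) works with the ideal completion, i.e.\ lower sets closed under finite joins, which is surely what is intended there.
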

The functor $\func{comp}$ corresponds to the
underlying functor $\cat{Coh} \to \cat{Sob}$:
note that this is \textit{not} fully faithful,
as we only consider quasi-compact morphisms between
coherent spaces.

Summarizing, we have a commutative square:
\[
\xymatrix{
\cat{Coh}^{\op} \ar@<.5ex>[r]^{U} 
\ar@{<->}[d]_{\simeq}^{\Omega_{c}} &
\cat{Sob}^{\op} \ar@<.5ex>[l] \ar@{<->}[d]_{\simeq}^{\Omega} \\
\cat{DLat} \ar@<.5ex>[r] & \cat{SCDLat} \ar@<.5ex>[l]^{U}
}
\] 
Since $\cat{DLat}$ is small complete and co-complete,
so is $\cat{Coh}$.
This justifies our standing point that we stick on to
coherent spaces.

\begin{Rmk}
Here, we will explain some other aspects,
which give reasonings of sticking to coherent spaces.
\begin{enumerate}
\item
Let $R=k^{\prod X}$ be a direct product
of the copies of a field $k$, with an infinite index set $X$.
Note that $\Spec R \neq \amalg_{X} \Spec k=X$,
since $\Spec R$ is quasi-compact, while $X=\amalg_{X} \Spec k$
is not: $X$ is discrete,
and $\Spec R=\beta X$, the Stone-\v{C}ech compactification
of $X$. $\beta X$ is a Hausdorff coherent space,
in other words, a totally disconnected
compact Hausdorff space.
A point $x$ on $\beta X \setminus X$ is a prime ideal corresponding
to a non-principal ultrafilter on $X$.
The stalk $\scr{O}_{X,x}$ is a 
non-standard extension field of $k$,
if the cardinality of $k$ is not less than $\# X$.
(See for example, \cite{CN} or other textbooks on set theory
or model theory.)

This is saying that considering coproducts of the
underlying space in $\cat{Top}$ might not 
be the best choice, since product of rings and
coproduct of schemes does not coincide.
In contrast, when we consider coproducts in $\cat{Coh}$,
then $\amalg_{X} \Spec k$ coincides with $\Spec R$.
This implies that we are ignoring some data of the algebra,
when we regard the underlying space merely as a topological space,
especially when we consider infinite operations.

\item One of the motivation of extending the
notion of schemes is from arithmetics:
we want to handle the infinite places of a given number field
equally with the finite places.
However, the archimedean-complete field $\RR$
appears only after some transcendental operations.
Let $R$ be a subring of $\QQ^{\NNN}$
whose element $(a_{n})_{n \in \NNN}$ is uniformly bounded:
namely, there is an upper bound $M>0$ such that
$|a_{n}|<M$ for any $n$.
Then, $\RR$ appears as the residue field 
$R/\mathfrak{m}$ of
 the maximal ideal $\mathfrak{m}$
of $R$, corresponding to a non-principal ultrafilter $\scr{U}$ on $\NNN$:
\[
\mathfrak{m}=\{(a_{n})_{n} \mid \forall\epsilon>0,
\ |a_{n}|<\epsilon \ \rom{a.e.} \scr{U}\}.
\]
This hints us that, these transcendental points
may bridge the gap between the archimedean world
and the non-archimedean world.
\end{enumerate}
\end{Rmk}

Before we go on to the next section,
we mention a simple, but important
fact on sheaves on coherent spaces.
Let $X$ be a topological space.
Since $\Omega(X)$ and $\Omega_{c}(X)$
are posets, they can be regarded as a category:
the object set is $\Omega(X)$ (resp. $\Omega_{c}(X)$) and
there is a unique morphism
$a \to b$ if and only if $a \leq b$.
\begin{Prop}
Let $X$ be a coherent space,
and $\scr{C}$ be a small complete category.
Then, to give a $\scr{C}^{\op}$-valued sheaf on $X$
is equivalent to give a finite continuous functor
$\Omega_{c}(X) \to \scr{C}$. 
\end{Prop}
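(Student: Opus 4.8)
The plan is to realize the claimed bijection as a comparison between two presentations of the same sheaf: the ``large'' data of a sheaf on all of $\Omega(X)$, and the ``small'' data of its restriction to the basis $\Omega_{c}(X)$, subject only to the finite covering conditions. I would set up two mutually inverse constructions. In one direction, given a $\scr{C}^{\op}$-valued sheaf $\scr{F}$ on $X$ --- a functor $\scr{F}:\Omega(X)^{\op}\to\scr{C}^{\op}$ satisfying the sheaf condition for every open cover --- I restrict along the inclusion $\Omega_{c}(X)\hookrightarrow\Omega(X)$ to obtain $G:=\scr{F}|_{\Omega_{c}(X)}$. Since a finite cover of a quasi-compact open by quasi-compact opens is in particular an open cover, $G$ automatically carries such finite covers to the corresponding limits in $\scr{C}^{\op}$, which is precisely the content of $G$ being finite continuous; so this construction is essentially free. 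In the other direction, given a finite continuous functor $G:\Omega_{c}(X)\to\scr{C}$, I extend it to all opens by the limit formula
\[
\scr{F}(U)\;=\;\varprojlim_{\substack{V\in\Omega_{c}(X)\\ V\subseteq U}} G(V),
\]
that is, by the right Kan extension of the associated presheaf along $\Omega_{c}(X)^{\op}\hookrightarrow\Omega(X)^{\op}$. The limit is formed in the value category $\scr{C}^{\op}$, whose existence we draw from the completeness hypotheses; the indexing poset $\{V\in\Omega_{c}(X):V\subseteq U\}$ is directed, since $\Omega_{c}(X)$ is closed under finite unions.

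The heart of the matter is the input from coherence, which I would isolate as a separate step: the Grothendieck topology that $\Omega(X)$ induces on the basis $\Omega_{c}(X)$ is generated by the \emph{finite} covers. Three of the four defining properties enter here. Quasi-compactness of each $U\in\Omega_{c}(X)$ lets me replace any covering of $U$ by quasi-compact opens with a finite subcover. That $\Omega_{c}(X)$ is a basis lets me refine an arbitrary open cover of $U$ to one by quasi-compact opens. Quasi-separatedness guarantees that the pairwise intersections $U_{i}\cap U_{j}$ of members of $\Omega_{c}(X)$ are again quasi-compact, so that the whole \v{C}ech diagram of a finite cover stays inside $\Omega_{c}(X)$ and the finite continuity of $G$ applies to it verbatim. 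Finally, sobriety --- via the Stone duality $\cat{Coh}^{\op}\simeq\cat{DLat}$ already established --- ensures that $\Omega_{c}(X)$ loses no information about $X$, so that working on the basis is genuinely equivalent to working on $X$.

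With these constructions in hand I would verify that they are mutually inverse. The round trip starting from $G$ returns $G$ on quasi-compact opens, because for $U\in\Omega_{c}(X)$ the object $U$ is the maximum of its own indexing poset, so the limit collapses to $G(U)$. The substantive verification is the other round trip: that the extended presheaf $\scr{F}$ satisfies the sheaf condition for \emph{every} open cover, not merely the finite ones. Given an arbitrary cover of an open $U$, I reduce it, using the coherence step above, to finite covers by quasi-compact opens of the quasi-compact $V\subseteq U$; the compatibility of the defining directed limit over such $V$ with the finite equalizers coming from finite continuity is then a formal consequence of the fact that limits commute with limits, once the indexing is arranged as above. This interchange is the step I expect to be the main obstacle, and the place where quasi-separatedness does the real work, since it is what keeps the nerve of each finite cover within the domain $\Omega_{c}(X)$ on which $G$ is defined and finite continuous. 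Checking naturality of both assignments in $\scr{F}$ and in $G$ is then routine, completing the equivalence.
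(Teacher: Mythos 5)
Your proposal is correct in substance and shares the paper's overall strategy --- identifying sheaves on $X$ with data on the basis $\Omega_{c}(X)$ --- but your extension step is a genuinely different construction from the paper's. The paper extends a finite continuous functor $F$ to an arbitrary open $U$ by \emph{choosing} a covering $U=\bigcup_{i}U_{i}$ by quasi-compact opens and setting $\tilde{F}(U)$ to be the equalizer of $\prod_{i}F(U_{i}) \rightrightarrows \prod_{ij}F(U_{i}\cap U_{j})$, then asserting (without proof) independence of the chosen refinement; you instead use the choice-free right Kan extension $\scr{F}(U)=\varprojlim_{V\subseteq U,\,V\in\Omega_{c}(X)}G(V)$, so the independence issue disappears and the work shifts to verifying the sheaf axiom for arbitrary covers, which you correctly reduce --- via quasi-compactness (finite subcovers) and quasi-separatedness (the \v{C}ech diagram of a finite cover stays inside $\Omega_{c}(X)$) --- to the finite continuity of $G$. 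Both routes are standard and the two formulas agree on any sheaf; the paper's \v{C}ech formula has the advantage of matching the later explicit construction of $\scr{O}_{\Spec R}$ as a (co)equalizer over a chosen covering, while yours buys well-definedness for free at the cost of the interchange argument. Three caveats on your write-up. First, sobriety plays no role here: your appeal to Stone duality is a red herring, since only the basis property, quasi-compactness, and quasi-separatedness are used. Second, saying the limit is ``formed in the value category $\scr{C}^{\op}$'' clashes with drawing its existence from completeness of $\scr{C}$; the limits must be computed in $\scr{C}$ itself, which is what the paper's hypothesis provides (the paper has the same variance looseness, but you should not propagate it). Third, ``limits commute with limits'' alone does not yield the sheaf condition for infinite covers: the essential step is gluing over a finite subcover and checking that the glued section restricts correctly on every member of the original cover, and it is finite continuity of $G$ --- not a purely formal interchange --- that makes this work; your sketch gestures at this but should state it as the explicit lemma it is.
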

\begin{proof}
Let $F: \Omega_{c}(X) \to \scr{C}$ be a finite continuous
functor. We will define a continuous functor
$\tilde{F}:\Omega(X) \to \scr{C}$ as follows:
any $U \in \Omega(X)$ is a union of quasi-compact
subsets $U_{i}$'s of $X$.
Then, $\tilde{F}(U)$ is defined as the equalizer of
$\prod_{i} F(U_{i}) \rightrightarrows \prod_{ij} F(U_{i} \cap U_{j})$.
This does not depend on the choice of the refinement $\{U_{i}\}_{i}$.
\end{proof}

\section{Universality of coherent schemes}
In this section, we characterize the category
of coherent schemes by a universal property.
Note that this cannot be achieved only
by specifying the algebraic type;
we must also fix the Grothendieck topology.

The essential idea of the construction
of general schemes is already given in 
\cite{TV}.

\subsection{The definition of $\scr{C}$-schemes}

In the sequel, we fix the following data:
$\scr{C}$ is a category
which admits pull backs and finite coproducts.
\begin{Def}
Let $\scr{E}$ be a lluf subcategory of $\scr{C}$
(namely, $\scr{E} \to \scr{C}$ is faithful
and essentially surjective) such that
\begin{enumerate}[(a)]
\item
$\scr{E}$ is closed under isomorphisms,
and base change: namely,
if $a \to b$ is a morphism in $\scr{E}$,
then $a \times_{b} c \to c$ is also in $\scr{E}$.
\item $\scr{E}$ admits pushouts, and $\scr{E} \to \scr{C}$
is pushout preserving.
In particular, the initial object of $\scr{C}$
coincides with that of $\scr{E}$.
\item Any morphism $f:A\to R$ in $\scr{E}$
is monic and \textit{flat}, namely $A \times_{R}(-)$
is coproduct preserving.
\item If $\{U_{i} \to X\}_{i}$ is a finite set of $\scr{E}$-morphisms,
then the diagram
\[
\amalg_{i,j}U_{i} \times_{X} U_{j}
\stackrel{p_{1},p_{2}}{\rightrightarrows} \amalg_{i}U_{i}
\]
has the coequalizer in $\scr{E}$,
where $p_{1}:U_{i} \times_{X} U_{j} \to U_{i}$
and $p_{2}:U_{i} \times_{X} U_{j} \to U_{j}$
are canonical morphisms.
\end{enumerate}
\end{Def}

\begin{Def}
\label{def:groth:top}
$\mathcal{O}=\{\mathcal{O}_{X}\}_{X \in \scr{C}}$
is a family indexed by the objects in $\scr{C}$,
and $\mathcal{O}_{X}$ is a family of finite sets
$\{U_{i} \to X\}_{i}^{<\infty}$ of morphisms in $\scr{E}$,
which satisfies the following condition:
\begin{enumerate}
\item
the canonical fork
\[
\amalg_{i,j}U_{i} \times_{X} U_{j}
\stackrel{p_{1},p_{2}}{\rightrightarrows} \amalg_{i}U_{i} \to X
\]
is exact.
This is what we call \textit{descent datum}.
\item $\mathcal{O}$ gives a coherent Grothendieck topology
on $\scr{C}$, namely:
\begin{enumerate}
\item $\{X \to X\}$ is in $\scr{O}_{X}$.
\item If $S_{1} \subset S_{2}$ is an inclusion
of finite sets of $\scr{E}$-morphisms over $X \in \scr{C}$
and $S_{1} \in \mathcal{O}_{X}$,
then $S_{2} \in \mathcal{O}_{X}$.
\item If $X \to Y$ is a morphism in $\scr{C}$,
and $S \in \mathcal{O}_{Y}$,
then $X \times_{Y} S \in \mathcal{O}_{X}$,
where
\[
X \times_{Y} S=\{ X \times_{Y} U \to X \mid 
[U \to Y] \in S\}.
\]
\item If $S \in \mathcal{O}_{X}$
and $T=\{U_{i} \to X\}_{i}$ are finite sets of $\scr{E}$-morphisms
such that $V \times_{X} T \in \mathcal{O}_{V}$
for any $[V \to X] \in S$, then
$T \in \mathcal{O}_{X}$.
\end{enumerate}
\end{enumerate}
\end{Def}

\begin{Def}
\begin{enumerate}
\item
We will refer to the triple $\scr{C}=(\scr{C},\scr{E},\mathcal{O})$
as a \textit{coherent site}.
\item
Let $\scr{C}_{i}=(\scr{C}_{i},\scr{E}_{i},\mathcal{O}^{i})$
($i=1,2$) be two coherent sites.
A morphism of coherent sites
$F:\scr{C}_{1} \to \scr{C}_{2}$ is a functor
such that $F(\scr{E}_{1}) \subset \scr{E}_{2}$
and $F(\mathcal{O}^{1}) \subset \mathcal{O}^{2}$.
\item We denote by $\cat{CohSite}$
the 2-category of coherent sites. 
\end{enumerate}
\end{Def}

\begin{Exam}
Here, we will give some examples of coherent sites.
\begin{enumerate}
\item The category $\cat{Coh}$ of coherent spaces,
or equivalently, the opposite category $\cat{DLat}^{\op}$
of distributive lattices: $\scr{E}$ is the subcategory
of open immersions, and $\{U_{i} \to X\} \in \mathcal{O}$
if and only if $\{U_{i}\}$ covers $X$, in the usual sense.
\item The opposite category $\cat{CRing}^{\op}$ of commutative
ring with units:
$\scr{E}$ is the subcategory of localizations
$S^{-1}R \to R$ of finite type, namely
the multiplicative system $S$ is finitely
generated (hence $S^{-1}R \simeq R_{f}$ for some $f \in R$)
$\{R_{f_{i}} \to R\}_{i} \in \mathcal{O}_{R}$
if and only if $(f_{i})_{i}$ generates the unit ideal.
We will discuss the generalization of this example
in section \S 5.
\item The opposite category $\cat{CMnd$_{0}$}^{\op}$
of commutative monoids with an absorbing element.
$\scr{E}$ is the subcategory of localizations of finite type,
similar to the case of rings.
$\{R_{f_{i}} \to R\}_{i}$ is an element of $\mathcal{O}_{R}$ if and only if
one of the $f_{i}$'s is a unit.
\end{enumerate}
\end{Exam}

The crucial difference between the category
of rings and that of schemes is that we can ``patch"
objects along open subobjects.
Here, we will axiomatize what ``patching" means.
\begin{Def}
Let $\scr{C}=(\scr{C},\scr{E},\mathcal{O})$
be a coherent site.
$\scr{C}$ is \textit{schematic},
if any cocartesian diagram
\[
\xymatrix{
a \ar[r] \ar[d] & b \ar[d] \\
c \ar[r] & b \amalg_{a} c
}
\]
in $\scr{E}$ is bicartesian, namely $a$ is the pullback of 
$[b \to b\amalg_{a} c \leftarrow c]$.
\end{Def}
We will denote by $\cat{CohSch}$
the full subcategory of $\cat{CohSite}$
consisting of schematic coherent sites.
The main theorem of this paper is as follows:

\begin{Thm}
\label{thm:univ:coh:sch}
The underlying functor 
$U:\cat{CohSch} \to \cat{CohSite}$
admits a left adjoint $\func{Sch}$.
\end{Thm}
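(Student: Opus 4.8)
The plan is to realize $\func{Sch}$ as the reflection of a coherent site into its schematic closure, by explicitly gluing the objects of $\scr{C}$ along $\scr{E}$-morphisms and then verifying the resulting construction has the required $2$-universal property with unit $\Spec$. First I would construct the underlying category. A $\scr{C}$-scheme will be a \emph{finite gluing datum}: a finite family $(U_{i})_{i}$ of objects of $\scr{C}$, open subobjects $U_{ij}\hookrightarrow U_{i}$ in $\scr{E}$, and isomorphisms $U_{ij}\xrightarrow{\sim}U_{ji}$ satisfying the cocycle condition over the triple overlaps $U_{ij}\times_{U_{i}}U_{ik}$. Equivalently, inside the topos of sheaves on $(\scr{C},\mathcal{O})$, a $\scr{C}$-scheme is a sheaf admitting a finite cover by representables, with morphisms taken in the sheaf category; the two descriptions agree because the descent datum of Definition \ref{def:groth:top}(1) is exactly the sheaf condition for such a cover. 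The unit $\Spec:\scr{C}\to U(\func{Sch}(\scr{C}))$ sends $X$ to the one-element datum $(X)$, i.e.\ the representable sheaf, and flatness and monicity of $\scr{E}$-morphisms guarantee this is a morphism of coherent sites.

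Next I would equip $\func{Sch}(\scr{C})$ with a coherent-site structure. Finite coproducts of schemes are disjoint unions of gluing data, and pullbacks are computed affine-locally and reglued, so $\func{Sch}(\scr{C})$ admits pullbacks and finite coproducts. I would take $\scr{E}^{\Sch}$ to be the open immersions, meaning morphisms that lie in $\scr{E}$ locally on a finite affine cover of source and target, and $\mathcal{O}^{\Sch}$ the finite jointly covering families of such. Axioms (a)--(d) and the conditions of Definition \ref{def:groth:top} for $(\func{Sch}(\scr{C}),\scr{E}^{\Sch},\mathcal{O}^{\Sch})$ reduce, cover by cover, to the corresponding axioms for $(\scr{C},\scr{E},\mathcal{O})$, using descent and flatness. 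Schematicity is then almost tautological: a cocartesian square of open immersions is a one-step gluing, and $\func{Sch}(\scr{C})$ was built precisely so that every such gluing is realized, whence the square is bicartesian.

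Finally I would verify the adjunction. Given a schematic coherent site $\scr{D}$ and a morphism $F:\scr{C}\to U(\scr{D})$, define $\tilde F:\func{Sch}(\scr{C})\to\scr{D}$ by sending a datum $(U_{i},U_{ij})$ to the object of $\scr{D}$ obtained by gluing $(F(U_{i}),F(U_{ij}))$; this gluing exists because $\scr{D}$ is schematic, so the iterated pushouts of $\scr{E}$-morphisms realizing the patch exist and are bicartesian. One then checks that $\tilde F$ preserves $\scr{E}$ and $\mathcal{O}$, that $\tilde F\circ\Spec\cong F$, and that $\tilde F$ is, up to canonical isomorphism, the unique morphism in $\cat{CohSch}$ with this property, since its value on affines is forced to be $F$ and it must carry the defining gluings of a scheme to the corresponding gluings in $\scr{D}$.

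The hard part will be this last step. Because a morphism of $\scr{C}$-schemes is only given locally, proving that $\tilde F$ is well defined and functorial \emph{on morphisms} requires that locally-matching $\scr{D}$-morphisms glue uniquely; this is exactly where the exactness of the canonical fork in Definition \ref{def:groth:top}(1), together with the monicity and flatness of $\scr{E}$-morphisms, is invoked as a descent statement inside $\scr{D}$. A second delicate point is the $2$-categorical bookkeeping: since $\func{Sch}$ is a $2$-functor and a $\scr{C}$-scheme is only determined up to refinement of the chosen finite affine cover, uniqueness of $\tilde F$ holds only up to coherent isomorphism, so I must carry the gluing isomorphisms and cocycle conditions through every step and check that the ambiguity in the choice of cover is absorbed into canonical natural isomorphisms.
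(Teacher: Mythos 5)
Your proposal is correct in strategy, but it follows a genuinely different route from the paper. The paper builds $\func{Sch}(\scr{C})=\cat{$\scr{C}$-Sch}$ as a category of structured spaces: it first constructs the lattice $\Omega_{1}(R)$ of finite families of $\scr{E}$-morphisms, applies Stone duality to get $\Spec^{0}$, defines weak $\scr{C}$-schemes $(X,\scr{O}_{X},\beta_{X})$ with a support morphism, realizes $\Spec$ as the right adjoint of the global section functor (and proves it fully faithful), and then establishes the adjunction of Theorem \ref{thm:univ:coh:sch} by exhibiting the unit as $\Spec$ and the counit as the \emph{inverse} of $\Spec$, which is legitimate because of Proposition \ref{prop:sch:idem}: for a schematic site $\scr{D}$, every $\scr{D}$-scheme is affine, proved via the Lemma that $\Spec$ carries bicartesian $\scr{E}$-squares to bicartesian squares of schemes, plus induction on the number of affine pieces. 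You instead take the model that the paper only mentions in passing in a Remark (the To\"{e}n--Vaqui\'{e} style functor-of-points picture): finite gluing data, equivalently sheaves on $(\scr{C},\mathcal{O})$ admitting a finite representable cover, and you verify the universal property directly by constructing the transpose $\tilde{F}$ through gluing $(F(U_{i}),F(U_{ij}))$ inside $\scr{D}$. The mathematical crux is the same in both proofs --- schematicity is exactly what makes finite gluings of affines realizable in $\scr{D}$ --- but it is deployed in transposed form: the paper packages it once and for all as the equivalence $\Spec:\scr{D}\simeq\cat{$\scr{D}$-Sch}$, while you re-run the gluing for each $F$. Your identification of where the descent axiom of Definition \ref{def:groth:top}(1) enters is accurate and is in fact the cleanest part of your argument: since any competing $G$ with $G\circ\Spec\cong F$ preserves covers, exactness of the fork over $G(X)$ in $\scr{D}$ forces $G(X)$ to be the colimit of the same diagram defining $\tilde{F}(X)$, which gives uniqueness. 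What the paper's route buys is reusability: the weak-scheme model and the $\Gamma\dashv\Spec$ adjunction are precisely what drive the later comparison theorems with locally ringed spaces, $\scr{A}$-schemes, and quasi-coherent modules, and the idempotency of the adjunction is made explicit. What your route buys is economy and transparency for this one theorem: schematicity of $\func{Sch}(\scr{C})$ becomes a soft statement (pushouts of monomorphisms in a topos are bicartesian), and no lattice theory is needed; the price is the iterated-pushout and cocycle bookkeeping for $n\geq 3$ pieces, which you correctly flag. One caveat to note: your gluing of $(F(U_{i}),F(U_{ij}))$ tacitly uses that $F$ carries overlaps to overlaps, i.e.\ preserves pullbacks along $\scr{E}$-morphisms; this is not literally required by the paper's definition of a morphism of coherent sites, but the paper's own construction of $\func{Sch}$ on morphisms of sites makes the identical tacit assumption, so this is an ambiguity inherited from the paper rather than a gap specific to your argument.
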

The next section is devoted to the proof.

\begin{Rmk}
\label{rmk:sch:cat:incomp}
Note that until now, we haven't assumed completeness
of the category $\scr{C}$.
This enables us to apply $\func{Sch}$ to the
category of schemes, and we can say that
the above adjoint is idempotent.
Note that the category of ordinary schemes is \textit{not} complete,
for example, we cannot define an infinite product of
projective spaces $\prod^{\infty}\PP^{1}$.
The reason of this incompleteness relies on the fact
that we assume schemes to be locally isomorphic
to the spectrum of a ring:
$\prod^{\infty}\PP^{1}$ has the product topology,
which means that there are no open affine subsets.
This is another motivation of considering weak $\scr{C}$-schemes
later on.
\end{Rmk}

\subsection{Proof of Theorem \ref{thm:univ:coh:sch}}

To begin with, we must construct
the spectrum $\Spec^{0} R$ for each object $R$ 
of a coherent site $\scr{C}$. 
\begin{Def}
Let $\scr{C}$ be a coherent site,
and $R \in \scr{C}$.
\begin{enumerate}
\item
$\Omega_{0}(R)$ is the family of finite sets 
$\{U_{i} \to R\}_{i}$ of $\scr{E}$-morphisms.
\item We define a relation $\prec$ on $\Omega_{0}(R)$ by
\[
\{U_{i}\}_{i} \prec \{V_{j}\}_{j}
\Leftrightarrow \{U_{i}\times_{R} V_{j} \to U_{i}\}_{j} \in 
\mathcal{O}_{U_{i}} \ (\forall i).
\]
This relation satisfies the reflexivity and transitivity.
\item Let $\equiv$ be the equivalence relation
generated by $\prec$, namely
\[
a \equiv b \Leftrightarrow a \prec b \rom{ and } b \prec a.
\]
Set $\Omega_{1}(R)=\Omega_{0}(R)/\equiv$.
\item We can define $\vee$ and $\wedge$ on $\Omega_{1}(R)$ by
\begin{eqnarray*}
\{U_{i}\}_{i \in I} \vee \{U_{i}\}_{i \in J}
&=\{U_{i}\}_{i \in I \amalg J}, \\
\{U_{i}\}_{i} \wedge \{V_{j}\}_{j}
&=\{U_{i} \times_{R} V_{j}\}_{ij}
\end{eqnarray*}
This does not depend on the representation,
and $\Omega_{1}(R)$ becomes a distributive lattice:
$0$ is an empty set, and $1$ is $\{R \to R\}$.
\item Let $A \to R$ be a morphism in $\scr{C}$.
Then, we have a morphism
$\Omega_{1}(R) \to \Omega_{1}(A)$
defined by $\{U_{i}\}_{i} \mapsto \{A \times_{R} U_{i}\}_{i}$.
This gives a contravariant functor $\Omega_{1}:\scr{C}^{\op}
\to \cat{DLat}$.
We will denote the functor
$\func{pt}\func{comp}\Omega_{c}(-):\scr{C} \to \cat{Coh}$
by $\Spec^{0}$.
\end{enumerate}
\end{Def}
\begin{proof}
Here, we will only give a brief proof of (2) and (4).
In the sequel, we omit the subscript of $\times_{R}$
and simply write $\times$.
\begin{enumerate}
\setcounter{enumi}{1}
\item We first show the reflexivity:
$\{U_{i}\}_{i} \prec \{U_{i}\}_{i}$.
For any $j$, $U_{j} \times U_{j}$ is isomorphic to $U_{j}$,
since $U_{j} \to R$ is monic.
Hence, $\{U_{j} \to U_{j}\} \in \mathcal{O}_{U_{j}}$.
From the condition (b) of Definition \ref{def:groth:top}, we have that 
$\{U_{i} \times U_{j} \to U_{j}\}_{j} \in \mathcal{O}_{U_{j}}$.

Next, we show the transitivity:
suppose $\{U_{i}\}_{i} \prec \{V_{j}\}_{j}$ and
$\{V_{j}\}_{j} \prec \{W_{k}\}_{k}$.
Then, $\{W_{k} \times V_{j} \to V_{j}\}_{k} \in \mathcal{O}_{V_{j}}$ for any $j$,
and (c) shows that $\{W_{k} \times V_{j} \times U_{i} \to V_{j} \times U_{i}\}_{k}
\in \mathcal{O}_{U_{i} \times V_{j}}$ for any $i,j$.
Combining with the fact that 
$\{U_{i} \times V_{j} \to U_{i}\}_{j} \in \mathcal{O}_{U_{i}}$
for any $i$, (d) tells that $\{U_{i}\}_{i} \prec \{W_{k}\}_{k}$.
\setcounter{enumi}{3}
\item We will only prove for the join.
Suppose $\{U_{i}\}_{i} \prec \{\tilde{U}_{k}\}_{k}$
and $\{V_{j}\}_{j} \prec \{\tilde{V}_{l}\}_{l}$.
Then, $\{V_{j} \times \tilde{V}_{l} \to V_{j}\}_{l} \in \mathcal{O}_{V_{j}}$
tells that 
\[
\{U_{i} \times \tilde{U}_{k} \times V_{j} \times \tilde{V}_{l}
\to U_{i} \times \tilde{U}_{k} \times V_{j}\}_{l} \in 
\mathcal{O}_{U_{i} \times \tilde{U}_{k} \times V_{j}}.
\]
Also, $\{\tilde{U}_{k} \times U_{i} \to U_{i}\}_{k} \in \mathcal{O}_{U_{i}}$
implies
\[
\{U_{i} \times \tilde{U}_{k} \times V_{j} \to U_{i} \times V_{j}\}_{k}
\in \mathcal{O}_{U_{i} \times V_{j}}.
\]
Combining these two, we obtain 
$\{U_{i} \times V_{j}\}_{ij} \prec \{\tilde{U}_{k} \times \tilde{V}_{l}\}_{kl}$.
\end{enumerate}
\end{proof}

\begin{Def}
\label{def:weak:sch}
\begin{enumerate}
\item On a coherent space $X$,
we have a canonical $\cat{DLat}$-valued sheaf 
$\tau_{X}:\Omega_{c}(X) \to \cat{DLat}^{\op}$
defined by $U \mapsto \Omega_{c}(U)$.
A morphism $f:X \to Y$ of coherent spaces
induces a morphism $f^{-1}:\tau_{Y} \to f_{*}\tau_{X}$
of $\cat{DLat}$-valued sheaves on $X$,
defined by
\[
\Omega_{c}(U) =\tau_{Y}(U) \to \tau_{X}(f^{-1}U)=\Omega_{c}(f^{-1}U)
\quad (V \mapsto f^{-1}V).
\]
\item On a $\scr{C}^{\op}$-valued coherent space
$(X,\scr{O}_{X})$,
we have a canonical $\cat{DLat}$-valued sheaf
$\sigma_{X}:\Omega_{c}(X) \to \cat{DLat}^{\op}$
defined by the sheafification of $U \mapsto \Omega_{1}(\scr{O}_{X}(U))$.
\item A \textit{weak $\scr{C}$-scheme} is a triple
$X=(X,\scr{O}_{X},\beta_{X})$, where $(X,\scr{O}_{X})$ is 
a $\scr{C}^{\op}$-coherent space,
and $\beta_{X}:\sigma_{X} \to \tau_{X}$ is a morphism
of $\cat{DLat}^{\op}$-valued sheaves on $X$, which satisfies the
following condition:

for any inclusion $V \subset U$ of quasi-compact open
subsets of $X$, the restriction functor $\scr{O}_{X}(V) \to \scr{O}_{X}(U)$
factors through the $\scr{E}$-morphism $j:A \to \scr{O}_{X}(U)$
whenever $\beta_{X}(U)(\{A \to \scr{O}_{X}(U)\}) \geq V$.

We will refer to $\beta_{X}$ as the ``support morphism" on $X$:
we will explain the reason of this name later in \ref{exam:Asch:Nagata}.

Also, we will denote the underlying coherent space by $|X|$ to avoid confusion. 
\item A morphism $f:X \to Y$ of weak $\scr{C}$-schemes
is a pair $(f,f^{\#})$, where $f:|X| \to |Y|$ is a quasi-compact morphism,
and $f^{\#}:f_{*}\scr{O}_{X} \to \scr{O}_{Y}$ is a morphism
of $\scr{C}^{\op}$-valued sheaves on $Y$ which
makes the following diagram commutative:
\[
\xymatrix{
\sigma_{Y} \ar[r]^{f^{\#}} \ar[d]_{\beta_{Y}} & f_{*}\sigma_{X} \ar[d]^{f_{*}\beta_{X}} \\
\tau_{Y} \ar[r]_{f^{-1}} & f_{*}\tau_{X}
},
\]
where the top arrow is defined by
\[
\sigma_{Y}(U)=\Omega_{1}(\scr{O}_{Y}(U)) 
\stackrel{\Omega_{1}f^{\#}}{\to} \Omega_{1}(\scr{O}_{X}(f^{-1}U))
=\sigma_{X}(f^{-1}U).
\]
\item We denote by $\cat{w.$\scr{C}$-Sch}$
the category of weak $\scr{C}$-schemes and their morphisms.
\end{enumerate}
\end{Def}
One advantage of this new definition is that,
we can obtain the spectrum functor as the adjoint
of the global section functor, as below.
\begin{Def}
The global section functor $\Gamma:\cat{w.$\scr{C}$-Sch} \to \scr{C}$
admits a right adjoint, namely the spectrum functor $\Spec$.
Moreover, $\Spec$ is fully faithful.
\end{Def}
\begin{proof}
For an object $R \in \scr{C}$, we define a weak $\scr{C}$-scheme
$X=(|X|,\scr{O}_{X},\beta_{X})$ as follows:
\begin{enumerate}[(a)]
\item $|X|=\Spec^{0} R$,
\item $\scr{O}_{X}$ is a $\scr{C}^{\op}$-valued sheaf on $X$,
in other words, a finite continuous functor
$\Omega_{1}(R) \to \scr{C}$. This is defined by
\[
U=\{U_{i}\}_{i} \mapsto \coker(\amalg_{i,j}(U_{i} \times_{R} U_{j})
\stackrel{p_{1},p_{2}}{\rightrightarrows} \amalg_{i}U_{i}).
\]
This does not depend on the representation of the open set $U$.
For an inclusion $U=\{U_{i}\}_{i} \prec \{V_{j}\}_{j}=V$ of quasi-compact
open subsets of $|X|$,
we have a commutative diagram
\[
\xymatrix{
\amalg_{k,l}(V_{k} \times V_{l} \times U_{i}) \ar@<.5ex>[r] \ar@<-.5ex>[r] \ar[d] &
\amalg_{k}(V_{k} \times U_{i}) \ar[r] \ar[d] & U_{i} \ar@{.>}[d] \\
\amalg_{k,l}(V_{k} \times V_{l}) \ar@<.5ex>[r] \ar@<-.5ex>[r] 
& \amalg_{k} V_{k} \ar[r] & \scr{O}_{X}(V)
},
\]
which patches up to give $\scr{O}_{X}(U) \to \scr{O}_{X}(V)$.

This is the \textit{structure sheaf of $X$}.
\item The support morphism $\beta_{X}:\sigma_{X} \to \tau_{X}$
is defined canonically: any quasi-compact open subset $U$
of $|X|$ is represented by a family of $\scr{E}$-morphisms
$\{U_{i}\to R\}$. Set $A=\scr{O}_{X}(U)$.
Then, for any $\scr{E}$-morphism $V \to A$, the composition
\[
U_{i} \times_{A} V \to U_{i} \to U
\]
is an $\scr{E}$-morphism.
The family of $\scr{E}$-morphisms $\{U_{i} \times_{A} V \to U\}$
defines a quasi-compact open subset $\beta_{X}(U)(V)$
of $U$, and is independent
of the choice of the representation of $U$.
\end{enumerate}
Let $A \to B$ be a morphism in $\scr{C}$.
We can define a morphism
\[
X=\Spec A \to \Spec B =Y
\]
as follows: we already have the morphism between the underlying spaces
$f:|X| \to |Y|$. The morphism between
the structure sheaves $f^{\#}:f_{*}\scr{O}_{X} \to \scr{O}_{Y}$
is given as follows: for a quasi-compact open set $U=\{U_{i}\}$ of $Y$,
we have a commutative diagram
\[
\xymatrix{
\amalg_{i,j}(A \times_{B} U_{i} \times_{B} U_{j}) 
\ar[d] \ar@<.5ex>[r] \ar@<-.5ex>[r] &
\amalg_{i}(A \times_{B}U_{i}) \ar[d] \ar[r] &
\scr{O}_{X}(f^{-1}U) \ar@{.>}[d] \\
 \amalg_{i,j} (U_{i} \times_{B} U_{j}) 
 \ar@<.5ex>[r] \ar@<-.5ex>[r] &
\amalg_{i}U_{i}\ar[r] &
\scr{O}_{Y}(U)  \\
}
\]
which gives a morphism $\scr{O}_{X}(f^{-1}U) \to \scr{O}_{Y}(U)$.
We can easily check that $(f,f^{\#})$ is indeed
a morphism of weak $\scr{C}$-schemes.

Hence, we have a functor $\Spec: \scr{C} \to \cat{w. $\scr{C}$-sch}$.
The unit functor $\epsilon_{X}:X \to \Spec \Gamma(X,\scr{O}_{X})=\tilde{X}$
is given as follows: the support morphism $\beta_{X}(X)$
gives a lattice homomorphism 
$\Omega_{1}(\Gamma(X,\scr{O}_{X})) \to \Omega_{c}(X)$, which induces the morphism 
$\epsilon:|X| \to \Spec^{0}\Gamma(X,\scr{O}_{X})$
between the underlying spaces.
The morphism between the structure sheaves
$\epsilon^{\#}_{X}:\epsilon_{*}\scr{O}_{X} \to \scr{O}_{\tilde{X}}$
is given as follows:
for each $\scr{E}$-morphism $U_{i} \to \Gamma(X,\scr{O}_{X})$,
there is a morphism $\scr{O}_{X}(\epsilon^{-1}U_{i}) \to \scr{O}_{\tilde{X}}(U_{i})$.
These patch up to give a morphism $\epsilon_{*}\scr{O}_{X} \to \scr{O}_{\tilde{X}}$.

The counit functor is the canonical morphism
$\eta_{R}:R \to \Gamma(\Spec^{0}R, \scr{O}_{\Spec R})$.
Since $\eta$ is a natural isomorphism,
$\Spec$ is fully faithful.
\end{proof}

\begin{Def}
\begin{enumerate}
\item A weak $\scr{C}$-scheme which is isomorphic to a spectrum
of some object in $\scr{C}$ is called \textit{affine}.
\item
A \textit{$\scr{C}$-scheme} is a weak $\scr{C}$-scheme
which is locally affine.
Let $\cat{$\scr{C}$-Sch}$ be the full subcategory of $\cat{w.$\scr{C}$-Sch}$
consisting of $\scr{C}$-schemes.
\item We have a pullback in $\cat{$\scr{C}$-Sch}$ constructed
as follows: let $X,Y$ be a $\scr{C}$-schemes over a $\scr{C}$-scheme $S$.
\begin{enumerate}[(i)]
\item Case $X,Y,S$ are all affine, say $X=\Spec A$,
$Y=\Spec B$, $S=\Spec R$ respectively.
Then $X \times_{S}Y$ is defined as $\Spec(A \times_{R} B)$.
\item Case $S$ is affine:
let $X=\cup_{i}X_{i}$ and $Y=\cup_{j}Y_{j}$ be open affine coverings
of $X$ and $Y$. Then, $\{X_{i} \times_{S} Y_{j}\}_{ij}$ patches up to
define $X \times_{S} Y$.
\item Case $S$ is arbitrary:
Let $S=\cup_{l} S_{l}$ be an open affine covering of $S$,
and set $X_{l}=\pi_{X}^{-1}(S_{l})$, $Y_{l}=\pi_{Y}^{-1}(S_{l})$
where $\pi_{X}:X \to S$ and $\pi_{Y}:Y \to S$ are structure maps.
Then, $\{X_{l} \times_{S_{l}} Y_{l}\}_{l}$ patches up to give
$X \times_{S} Y$.
\end{enumerate}
These definitions does not depend on the choice of affine open covers
of $X,Y$ and $S$.

\item The lluf subcategory $\scr{E}^{\prime}$ of $\cat{$\scr{C}$-Sch}$
are open immersions, and $\mathcal{O}^{\prime}$
is the family of finite quasi-compact open covers.
Then, $(\cat{$\scr{C}$-Sch}, \scr{E}^{\prime},\mathcal{O}^{\prime})$
 becomes a schematic coherent site,
and the spectrum functor $\Spec:\scr{C} \to \cat{$\scr{C}$-Sch}$
becomes a morphism of coherent sites.
\item Let $F:\scr{C} \to \scr{D}$ be a morphism
of coherent sites.
Then, we can extend $F$ to a functor $\cat{$\scr{C}$-Sch} \to \cat{$\scr{D}$-Sch}$
as follows:
\begin{enumerate}[(i)]
\item If $X=\Spec A$ is an affine $\scr{C}$-scheme,
then $F(X)=\Spec F(A)$.
\item For a general $\scr{C}$-scheme $X$,
let $X=\cup_{i} U_{i}$ be an affine open covering of $X$.
Then $F(X)$ is defined by the patching of $F(U_{i})$.
This definition does not depend on the choice of the affine covering $\{U_{i}\}$.
\item Let $f:X \to Y$ be a morphism of $\scr{C}$-schemes,
and $Y=\cup_{i}Y_{i}$ be an affine open covering of $Y$.
Then, $X_{i}=f^{-1}(Y_{i}) \to Y_{i}$ is determined
by a morphism $\Gamma(X_{i},\scr{O}_{X}) \to B_{i}=\Gamma(Y_{i},\scr{O}_{Y})$.
This induces a morphism $\Gamma(F(X_{i}),\scr{O}_{F(X)})
\to F(B_{i})=\Gamma(F(Y_{i}),\scr{O}_{F(Y)})$,
and these patch up to give $F(f):F(X) \to F(Y)$.
\item In other words, we have a functor $\func{Sch}:
\cat{CohSite} \to \cat{CohSch}$,
sending $\scr{C}$ to $\cat{$\scr{C}$-Sch}$.
\end{enumerate}
\end{enumerate}
\end{Def}

\begin{Rmk}
\begin{enumerate}
\item
This definition of schemes is seemingly different
from the conventional one:
we usually use the notion of locally ringed spaces
and local homomorphisms, and in fact,
the above definition is almost equivalent to this
conventional one.
However, the usual definition of schemes bothers us since
it uses the limit process to look at the stalks.
Note that we cannot take arbitrary limits in the categories
of usual schemes.
\item There is another fancier way of defining 
$\scr{C}$-schemes.
$\scr{C}$ becomes a site by the fixed
Grothendieck topology,
and $\scr{C}$-scheme is a set-valued sheaf on $\scr{C}$
which locally isomorphic to
\[
\scr{C}(-,R):A \mapsto \scr{C}(A,R)
\]
for some $R \in \scr{C}$.
When we consider groupoid-valued sheaves,
then we can immediately define $\scr{C}$-stacks
(\cite{LMB}).
\item The notion of weak $\scr{C}$-schemes coincides
with $\scr{A}$-schemes introduced in \cite{Takagi2},
when $\scr{C}$ is the opposite category of rings,
monoids, etc. We will summarize the advantage of considering
$\scr{A}$-schemes and weak $\scr{C}$-schemes later on.
\end{enumerate}
\end{Rmk}

\begin{Prop}
\label{prop:sch:idem}
If $\scr{C}$ is schematic, then 
the functor $\Spec:\scr{C} \to \cat{$\scr{C}$-Sch}$
is an equivalence of categories.
\end{Prop}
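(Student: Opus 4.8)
The plan is to leverage the adjunction $\Gamma \dashv \Spec$ together with the fact, proved above, that $\Spec$ is fully faithful. A fully faithful right adjoint is an equivalence onto its essential image, which here is exactly the full subcategory of affine weak $\scr{C}$-schemes. Since $\cat{$\scr{C}$-Sch}$ is by definition the full subcategory of locally affine weak $\scr{C}$-schemes, the assertion is equivalent to the statement that, when $\scr{C}$ is schematic, every $\scr{C}$-scheme is affine; equivalently, that the unit $\epsilon_X \colon X \to \Spec\Gamma(X,\scr{O}_X)$ is an isomorphism for every $\scr{C}$-scheme $X$. For affine $X$ this is immediate from the triangle identities, since the counit $\eta$ is a natural isomorphism.

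For the general case I would induct on the size of a finite affine open cover of $X$, which exists because $|X|$ is coherent, hence quasi-compact. In the inductive step I first reduce to gluing two affines: writing a cover as $U_n$ together with $W = U_1 \cup \cdots \cup U_{n-1}$, the subscheme $W$ is affine by induction, and $U_n \cap W$ is a union of the quasi-compact opens $U_n \cap U_i$, each of which is affine because a quasi-compact open of an affine is itself affine (its object being the coequalizer guaranteed by condition (d) in the definition of $\scr{E}$). Thus $U_n \cap W$ is covered by fewer than $n$ affines and is affine by induction, so the problem becomes gluing $U = \Spec A$ and $V = \Spec B$ along the affine $U \cap V = \Spec C$. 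The restrictions furnish $\scr{E}$-morphisms $C \to A$ and $C \to B$; I set $R = A \amalg_C B$, the pushout formed in $\scr{E}$ (which exists by condition (b) and is preserved in $\scr{C}$). The sheaf axiom computes $\Gamma(X,\scr{O}_X)$ as this same pushout, so $\Gamma(X,\scr{O}_X) = R$, and the pushout coprojections $A \to R$, $B \to R$ are morphisms of $\scr{E}$, exhibiting $\Spec A$ and $\Spec B$ as quasi-compact open subobjects of $\Spec R$.

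This is exactly where the schematic hypothesis is used: because the pushout square is cocartesian in $\scr{E}$ it is bicartesian, so $A \times_R B = C$, identifying $\Spec A \cap \Spec B$ with $\Spec C$ inside $\Spec R$. Together with monicity of $\scr{E}$-morphisms (condition (c)), giving $A \times_R A = A$ and $B \times_R B = B$, the canonical fork $(A\times_R A) \amalg (A \times_R B) \amalg (B \times_R A) \amalg (B \times_R B) \rightrightarrows A \amalg B \to R$ has $R$ as its coequalizer, so $\{A \to R,\, B \to R\}$ is an exact descent datum and the structure sheaf of $\Spec R$ restricts to $A$, $B$, $C$ on $\Spec A$, $\Spec B$, $\Spec C$ respectively. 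Consequently $\epsilon_X$ restricts to isomorphisms on $U$, $V$ and $U \cap V$, so it is an open immersion identifying $X$ with the open subobject $\Spec A \cup \Spec B$ of $\Spec R$.

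The main obstacle is the remaining surjectivity: that $\Spec A \cup \Spec B$ is all of $\Spec R$, i.e. that $\{A \to R,\, B \to R\}$ actually lies in $\mathcal{O}_R$ rather than merely carrying an exact fork. I would settle this inside the lattice $\Omega_1(R)$ by proving $\{A\} \vee \{B\} = \{R\}$: one inequality is formal, and for $\{R\} \prec \{A,B\}$ I would use flatness (condition (c)) to compute the self-pullbacks of the family and then invoke the locality axiom of the Grothendieck topology (Definition \ref{def:groth:top}) to descend the trivial cover $\{R \to R\}$ to the family $\{A,B\}$. Granting this, $\epsilon_X$ is bijective on points in addition to being a local isomorphism, hence an isomorphism of weak $\scr{C}$-schemes; the induction then closes and $\Spec \colon \scr{C} \to \cat{$\scr{C}$-Sch}$ is an equivalence.
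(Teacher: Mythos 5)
Your proposal follows the same skeleton as the paper's proof: reduce the equivalence to the statement that every $\scr{C}$-scheme is affine, induct on the size of a finite affine cover down to the case of two affines $U=\Spec A$, $V=\Spec B$ glued along an affine $U\cap V=\Spec C$, and then use the schematic hypothesis to identify $X$ with $\Spec(A\amalg_{C}B)$. However, two steps do not hold up. The smaller one is your reduction of the intersection to the affine case: you assert that a quasi-compact open of an affine is affine because its structure object is ``the coequalizer guaranteed by condition (d)''. Existence of that coequalizer never implies affineness: condition (d) also holds in the (non-schematic) coherent site $\cat{CRing}^{\op}$ with localizations of finite type, where the punctured plane is a quasi-compact open of $\Spec k[x,y]$ whose structure object is $k[x,y]$, yet it is not affine. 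Under the schematic hypothesis the claim is in fact true, but its proof is essentially the gluing statement you are in the middle of establishing, so as written the justification is circular. This part is repairable: the paper instead runs a second induction on the number $m$ of affines covering $U_{1}\cap U_{2}$, never asserting that the intersection itself is affine.

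The decisive gap is the step you yourself call ``the main obstacle'', and the mechanism you propose cannot close it. Axiom (d) of Definition \ref{def:groth:top} certifies a family $T$ as a covering only when $T$ pulls back to coverings along some family $S$ \emph{already known} to lie in $\mathcal{O}_{R}$. At this point the only covering of $R$ available is (a superset of) the trivial one $\{R\to R\}$, for which the hypothesis of axiom (d) is verbatim the desired conclusion; taking $S=\{A\to R,\,B\to R\}$ itself is circular, even though monicity does show that $T=\{A,B\}$ pulls back to coverings along that $S$ (each pullback contains an identity). No argument from the stated axioms can succeed here: neither the coherent-site axioms nor the schematic condition (which mentions only $\scr{E}$, never $\mathcal{O}$) forces any nontrivial family to be a covering --- the analogue of the topology $\mathcal{O}^{\min}$ from \S 5, in which a family covers if and only if it contains an isomorphism, satisfies the descent condition and axioms (a)--(d) over any admissible $(\scr{C},\scr{E})$, and there the pushout family $\{A\to R,\,B\to R\}$ is simply not a covering. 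To your credit, you have isolated exactly the point the paper's own proof glosses over: its Lemma asserts $\Omega_{1}(D)=\Omega_{1}(B)\times_{\Omega_{1}(A)}\Omega_{1}(C)$ without justification, and that identity is equivalent to the covering statement you are stuck on. But flagging the obstacle is not the same as overcoming it; your proof, like the paper's, is incomplete at precisely this step, and closing it appears to require an additional hypothesis tying the topology $\mathcal{O}$ to pushouts in $\scr{E}$.
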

\begin{proof}
It suffices to show that any $\scr{C}$-scheme $X$ is affine.
Let $X=\cup_{i}^{n}U_{i}$ be an affine open covering of $X$.
It suffices to show for $n=2$,
since the rest follows from the induction.
Let $U_{1} \cap U_{2}=\cup_{j}^{m} V_{j}$ be an affine
open covering of $U_{1} \cap U_{2}$.
Again, it suffices to show for $m=1$.

Therefore, the proof is reduced to the following Lemma:
\end{proof}
\begin{Lem}
Suppose the following diagram is a bicartesian square
in $\scr{E}$ of $\scr{C}$:
\[
\xymatrix{
A \ar[r]^{u} \ar[d]_{v} & B \ar[d] \\
C \ar[r] & D
}
\]
Then, applying the spectrum functor to this square gives
a bicartesian square in $\cat{$\scr{C}$-Sch}$.
\end{Lem}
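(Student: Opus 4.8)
The plan is to verify the two halves of ``bicartesian'' separately: that the image square is cartesian, which is essentially formal, and that it is cocartesian, which carries all the content. Throughout I write the given square as $u:A\to B$, $v:A\to C$ with $D=B\amalg_{A}C$, so that the pullback condition reads $A\cong B\times_{D}C$ (the latter computed in $\scr{C}$).

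For the cartesian half I would argue directly from the definition of fibre products of affine $\scr{C}$-schemes, namely $\Spec B\times_{\Spec D}\Spec C=\Spec(B\times_{D}C)$, where $B\times_{D}C$ is the pull back in $\scr{C}$. Since the square is bicartesian in $\scr{E}$, the comparison map $A\to B\times_{D}C$ is an isomorphism, so applying $\Spec$ identifies $\Spec A$ with $\Spec B\times_{\Spec D}\Spec C$. In particular the two legs $\Spec B\to\Spec D$ and $\Spec C\to\Spec D$ are open immersions (images of $\scr{E}$-morphisms) whose scheme-theoretic intersection is exactly $\Spec A$. For the cocartesian half I would first extract, from bicartesianness, the descent datum of the family $\{B\to D,\ C\to D\}$: since $B\to D$ and $C\to D$ are monic one has $B\times_{D}B\cong B$ and $C\times_{D}C\cong C$, while the pullback condition gives $B\times_{D}C\cong A\cong C\times_{D}B$, so the canonical fork
\[
B\amalg A\amalg A\amalg C\rightrightarrows B\amalg C\to D
\]
reduces to the coequalizer presentation of the pushout $D=B\amalg_{A}C$, i.e.\ it is exact in the sense of Definition \ref{def:groth:top}. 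The aim is then to promote this to the statement that $\{\Spec B,\Spec C\}$ is a finite quasi-compact open cover of $\Spec D$ lying in $\mathcal{O}^{\prime}$, equivalently that $[B]\vee[C]=1$ in $\Omega_{1}(D)$; granting this, the exact fork translates through the construction of the structure sheaf into the identity $\Spec D=\coker\big(\Spec A\rightrightarrows\Spec B\amalg\Spec C\big)=\Spec B\amalg_{\Spec A}\Spec C$ in $\cat{$\scr{C}$-Sch}$. The universal property is then finished by the usual gluing: a compatible pair $\Spec B\to Z$, $\Spec C\to Z$ agreeing on the overlap $\Spec A=\Spec B\cap\Spec C$ descends, along this cover, to a unique $\Spec D\to Z$ for any $\scr{C}$-scheme $Z$, and one checks independence of all choices of presentation, which is routine.

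The main obstacle is precisely the covering step: bridging ``the descent fork is exact'' — a statement about a colimit in $\scr{C}$, guaranteed by bicartesianness — and ``$\{\Spec B,\Spec C\}$ is an honest open cover of $\Spec D$ with membership in $\mathcal{O}^{\prime}$.'' These are a priori different conditions, and it is exactly here that the compatibility between the Grothendieck topology $\mathcal{O}$ on $\scr{C}$ and the open-cover topology $\mathcal{O}^{\prime}$ on $\cat{$\scr{C}$-Sch}$ must be invoked, together with the schematic hypothesis used to produce the square in the first place: if the join $[B]\vee[C]$ failed to be the top element of $\Omega_{1}(D)$, then $\Spec$ of the pushout would be strictly larger than the glued scheme $\Spec B\cup_{\Spec A}\Spec C$, and the image square would fail to be cocartesian. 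I therefore expect the bulk of the work, and the only genuinely delicate point, to lie in showing that a bicartesian square in $\scr{E}$ forces $\{B\to D,\ C\to D\}$ into $\mathcal{O}_{D}$; once the cover is secured, both the coequalizer identification and the gluing of morphisms are formal consequences of the earlier constructions.
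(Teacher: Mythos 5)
Your cartesian half is fine, and your derivation that bicartesianness plus monicity makes the descent fork for $\{B\to D,\ C\to D\}$ exact is also correct. But the step you defer---that the bicartesian square forces $\{B\to D,\ C\to D\}$ into $\mathcal{O}_{D}$, equivalently $[B]\vee[C]=1$ in $\Omega_{1}(D)$---is not ``the bulk of the work'' waiting to be carried out: it is unprovable from the stated hypotheses, so the gap cannot be closed. Exactness of the descent fork is only condition (1) of Definition~\ref{def:groth:top}; membership in $\mathcal{O}_{D}$ is independent data, and neither the coherent-site axioms nor the schematic hypothesis force a family satisfying descent to be a covering. Concretely, let $\scr{C}=\cat{Coh}$ with $\scr{E}$ the quasi-compact open immersions, and let $\mathcal{O}$ be the coarse topology in which a family covers $X$ if and only if it contains an isomorphism (together with the empty family covering $\emptyset$); this is a legitimate coherent topology of the same kind as $\mathcal{O}^{\min}$ in \S 5, and the site is schematic, since pushouts along open immersions in $\cat{Coh}$ are gluings. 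Take $A=\emptyset$, $B=\{x\}$, $C=\{y\}$, $D=B\amalg C$ the two-point discrete space: the square is bicartesian in $\scr{E}$, but $\{B\to D,\ C\to D\}\notin\mathcal{O}_{D}$, so $[B]\vee[C]<1$ in $\Omega_{1}(D)$ and $\Spec^{0}D$ has three points, whereas $\Spec B\amalg_{\Spec A}\Spec C$ is the two-point $\scr{C}$-scheme. Hence $\Spec$ does not carry this square to a cocartesian one; the lemma genuinely needs $\{B\to D,\ C\to D\}\in\mathcal{O}_{D}$ as an extra hypothesis (which does hold in the intended application, where the square arises from an open cover of a $\scr{C}$-scheme, and the same caveat applies to Proposition~\ref{prop:sch:idem}).

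For comparison, the paper's own proof takes a more direct route than your gluing argument, but it hides exactly the same point: given $f:\Spec B\to X$ and $g:\Spec C\to X$ agreeing on $\Spec A$, it defines $h:\Spec D\to X$ on lattices by $U\mapsto(f^{-1}U,g^{-1}U)$ through the asserted identification $\Omega_{1}(D)=\Omega_{1}(B)\times_{\Omega_{1}(A)}\Omega_{1}(C)$, with structure-sheaf map $\scr{O}_{\Spec D}(h^{-1}U)=\scr{O}_{B}(f^{-1}U)\amalg_{\scr{O}_{A}((fu)^{-1}U)}\scr{O}_{C}(g^{-1}U)\to\scr{O}_{X}(U)$. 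That identification is \emph{equivalent} to the covering condition: restriction followed by union is the identity on the fibre product using only cartesianness and monicity, but for union followed by restriction to be the identity on $\Omega_{1}(D)$ one needs $\{D\}\prec\{B,C\}$, which is precisely $\{B\to D,\ C\to D\}\in\mathcal{O}_{D}$. So your diagnosis of where the difficulty sits is accurate and matches the lacuna in the paper's argument; what is missing, in both your proposal and the paper, is the recognition that this condition is an additional assumption rather than a consequence of bicartesianness.
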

\begin{proof}
Suppose we are given a commutative diagram
\[
\xymatrix{
\Spec A \ar[r] \ar[d] & \Spec B \ar[d]^{f} \\
\Spec C \ar[r]_{g} & X 
}
\]
in $\cat{$\scr{C}$-Sch}$.
We can define the unique morphism $h:\Spec D \to X$ by an explicit construction:
\[
h:\Omega_{c}(X) \to \Omega_{1}(D)=\Omega_{1}(B) \times_{\Omega_{1}(A)} \Omega_{1}(C)
\]
is given by $U \mapsto (f^{-1}U,g^{-1}U)$.
The morphism $h_{*}\scr{O}_{\Spec D} \to \scr{O}_{X}$ is given by
\[
\scr{O}_{\Spec D}(h^{-1}U)
=\scr{O}_{B}(f^{-1}U) \amalg_{\scr{O}_{A}((fu)^{-1}U)} \scr{O}_{C}(g^{-1}U)
\to \scr{O}_{X}(U).
\]
\end{proof}

We have come to the stage of proving 
Theorem \ref{thm:univ:coh:sch}.
\begin{proof}[Proof of Theorem \ref{thm:univ:coh:sch}]
We only have to show that the functor $\func{Sch}$
is the left adjoint of $U:\cat{CohSch} \to \cat{CohSite}$.
The unit $\epsilon:\Id \Rightarrow U\func{Sch}$ is the spectrum functor
$\epsilon_{\scr{C}}=\Spec:\scr{C} \to \cat{$\scr{C}$-Sch}$.
The counit $\eta:\func{Sch}U \Rightarrow \Id$ is defined by the inverse
of $\Spec$, since $\Spec:\scr{D} \to \cat{$\scr{D}$-Sch}$
is an equivalence for schematic coherent site $\scr{D}$
by Proposition \ref{prop:sch:idem}.
\end{proof}

\section{Comparison theorems}
\subsection{The correspondence between the topology and ideals}

In the previous sections, we obtained
the distributive lattice from open immersions.
However, in the usual definition we approach from
the ideal description.
In this section, we will explain how to
obtain the Grothendieck topology from ideals.

\begin{Def}
Let $V=\langle \Omega, E\rangle$ be an algebraic type and
$\scr{C}$ a category of $V$-algebras.
$V$ has \textit{self enhancing property}, if:
\begin{enumerate}[(a)]
\item $\scr{C}(A,B)$ has a natural structure
of $V$-algebras for any two objects $A,B \in \scr{C}$,
\item the composition is bilinear, namely 
the natural morphisms
$f^{*}:\scr{C}(B,X) \to \scr{C}(A,X)$
and $f_{*}:\scr{C}(X,A) \to \scr{C}(X,B)$
induced from any homomorphism $f:A \to B$
of $V$-algebras are also homomorphisms.
\end{enumerate}
\end{Def}

\begin{Rmk}
$V$ has self enhancing property if 
any two operators $\phi,\psi$ are \textit{commutative}, namely
if $\phi$ is $m$-ary and $\psi$ is $n$-ary,
then 
\begin{multline*}
\psi(\phi(x_{11},\cdots,x_{m1}),\cdots \phi(x_{1n},\cdots,x_{mn})) \\
=\phi(\psi(x_{11},\cdots,x_{1n}),\cdots,\psi(x_{m1},\cdots,x_{mn})).
\end{multline*}
Then, for any $A,B \in \scr{C}$,
the action of the operators on $\scr{C}(A,B)$ can be defined,
by the action on the value of homomorphisms:
\[
\phi(f_{1},\cdots,f_{n})(x)=\phi(f_{1}(x),\cdots,f_{n}(x))
\]
for any homomorphisms $f_{i} \in \scr{C}(A,B)$
and any $n$-ary operator $\phi$.
\end{Rmk}

\begin{Prop}
Let $V$ be an algebraic type with self enhancing property,
$\scr{C}$ the category of $V$-algebras,
and $M$ a $V$-algebra.
Then, the functor $\scr{C}(M,-):\scr{C} \to\scr{C}$ admits a left adjoint
$M \otimes (-)$.
\end{Prop}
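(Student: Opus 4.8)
The plan is to obtain $M\otimes(-)$ as the left adjoint of $G=\scr{C}(M,-)$ by applying Freyd's general adjoint functor theorem, exactly as in the earlier propositions. Since it has already been recorded that $\scr{C}=\cat{$V$-alg}$ is small complete, the only two things to check are that $G$ preserves small limits and that $G$ satisfies the solution set condition. The resulting adjoint is the tensor we want, characterized by the natural bijection $\scr{C}(M\otimes N,X)\cong\scr{C}(N,\scr{C}(M,X))$; note that $G$ genuinely lands in $\scr{C}$ (not merely in $\cat{Set}$) precisely because of the self enhancing property, and it is functorial because composition is bilinear, so $f_{*}:\scr{C}(M,A)\to\scr{C}(M,B)$ is a homomorphism for every $f:A\to B$.

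First I would verify limit preservation, which is formal once the enrichment is described valuewise. The $V$-algebra structure supplied by the self enhancing property on a hom-object $\scr{C}(M,N)$ is computed pointwise: an $n$-ary operator $\phi$ sends $(f_{1},\dots,f_{n})$ to the homomorphism $x\mapsto\phi(f_{1}(x),\dots,f_{n}(x))$. On the other hand, the adjunction $\cat{Set}\rightleftarrows\cat{$V$-alg}$ is monadic, so the forgetful functor creates small limits, and a limit $N=\catlim_{i}N_{i}$ in $\scr{C}$ is computed on underlying sets. Consequently the canonical comparison $\scr{C}(M,\catlim_{i}N_{i})\to\catlim_{i}\scr{C}(M,N_{i})$ is a bijection of underlying sets, and the valuewise description of the operators shows it is compatible with the $V$-structure, hence an isomorphism of $V$-algebras. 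Thus $G$ preserves all small limits.

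The only place requiring real care is the solution set condition, and this is where I expect the main obstacle to lie. Fix $N\in\scr{C}$. A homomorphism $\varphi:N\to\scr{C}(M,X)$ assigns to each element $n$ a homomorphism $\varphi(n):M\to X$, and hence a map $e:U(M)\times U(N)\to U(X)$, $e(m,n)=\varphi(n)(m)$, on underlying sets. Let $X'\subseteq X$ be the sub-$V$-algebra generated by the image of $e$. Since $\varphi(n)(m)\in X'$ for all $m,n$, each $\varphi(n)$ factors through $X'$, and because $X'\hookrightarrow X$ is monic and $G$ preserves monics, the induced map $N\to\scr{C}(M,X')$ is again a homomorphism and $\varphi$ factors through $G(X'\hookrightarrow X)$. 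Now $X'$ is generated by at most $\#\bigl(U(M)\times U(N)\bigr)$ elements, so it is a quotient of the free algebra $\func{free}(U(M)\times U(N))$ and therefore has cardinality at most $\kappa=\#\func{free}(U(M)\times U(N))$. Choosing one representative of each isomorphism class of $V$-algebras of cardinality $\leq\kappa$ yields a small solution set, so Freyd's theorem furnishes the left adjoint $M\otimes(-)$. The limit-preservation step is essentially bookkeeping; the substance is this cardinality bound, which is what makes the solution set small.
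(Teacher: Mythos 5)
Your proposal is correct and takes essentially the same route as the paper: both verify the solution set condition and invoke Freyd's adjoint functor theorem, and your subalgebra $X'\subseteq X$ generated by the values $\varphi(n)(m)$ is exactly the paper's $A$, the image of $\tilde{\varphi}:\func{free}(M\times N)\to X$, so your cardinality-bounded solution set and the paper's set of quotients of $\func{free}(M\times N)$ are the same device. The only difference is that you also spell out limit preservation (via monadicity and the pointwise enrichment), which the paper leaves implicit.
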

\begin{proof}
For any $N \in \scr{C}$,
let $S$ be the set of quotient $V$-algebras
of $\func{free}(M \times N)$, where $\func{free}:\cat{Set} \to \scr{C}$
is the free generator.
Suppose we are given a homomorphism $\varphi:N \to \Hom(M,X)$.
Then, we have a homomorphism $\tilde{\varphi}:\func{free}(M \times N) \to X$
induced by the map $M \times N \to X$, $(x,y) \mapsto \varphi(y)(x)$.
Let $A \in S$ be the image of $\tilde{\varphi}$.
Then we have a commutative diagram
\[
\xymatrix{
N \ar[r]^{\varphi} \ar[d] & \scr{C}(M,X) \\
\scr{C}(M,A) \ar[ru]_{\tilde{\varphi}_{*}}
}
\]
Since $S$ is a small set, we can apply Freyd's
adjoint functor theorem \cite{CWM}
to claim the existence of the left adjoint of $\Hom(M,-)$.
\end{proof}
As in the case of abelian categories,
$(\scr{C}, \otimes)$ becomes a closed
symmetric monoidal category:
the unit $\mathbf{1}$ with respect to $\otimes$ is $\func{free}(*)$,
and the composition $\scr{C}(A,B) \times \scr{C}(B,C) \to \scr{C}(A,C)$
naturally extends to $\scr{C}(A,B) \otimes \scr{C}(B,C) \to \scr{C}(A,C)$.

In the sequel, we fix an algebraic type $V$ with self enhancing property,
and $\scr{C}$ the category of $V$-algebras.
\begin{Def}
\begin{enumerate}
\item An object $R \in \scr{C}$ is a \textit{monoid object}
(resp. \textit{commutative monoid object})
if there are morphisms $R \otimes R \to R$
and $\mathbf{1} \to R$ satisfying the 
monoid (resp. commutative monoid) axioms.
We denote by $\cat{Mnd/$\scr{C}$}$ (resp. $\cat{CMnd/$\scr{C}$}$)
the category of monoid (resp. commutative monoid)
 objects in $\scr{C}$.
 Note that this notion is different from what we have
 defined in the preliminary (\S 1);
 therefore, we use different notations.
\item Let $R \in \scr{C}$ be a monoid object.
An object $M \in \scr{C}$ is an \textit{$R$-module} if
there is an $R$-action $R \otimes M \to M$ with the unital law.
When $R$ is commutative,
the category $\cat{$R$-mod}$ of $R$-modules shares
similar properties with $\scr{C}$, namely
it has self enhancing property, so that 
the symmetric monoidal structure $\otimes_{R}$ is well defined
and is closed.
Also, there is a free generator 
$\func{free}:\cat{Set} \to \cat{$R$-mod}$.
\item An \textit{ideal} of a commutative monoid object
$R$ is an $R$-submodule of $R$.
\item For two finitely generated
ideals $\mathfrak{a},\mathfrak{b}$ of a commutative monoid object
$R$, the multiplication
$\mathfrak{a}\mathfrak{b}$ is defined by the image
of the morphism $\mathfrak{a} \otimes_{R} \mathfrak{b} \to R$.
$\mathfrak{a}+\mathfrak{b}$ is defined by the image of the morphism
$\mathfrak{a} \amalg \mathfrak{b} \to R$.
These two operations make the set $I(R)$ of 
finitely generated ideals of $R$
into a semiring.
\end{enumerate}
\end{Def}
We have a set-theoretic description of monoid
objects: $R$ is a monoid object in $\scr{C}$
if $R$ has a multiplicative monoid structure which satisfies
the distribution laws:
\[
\begin{split}
a \cdot \phi(b_{1},\cdots, b_{n}) &=\phi(ab_{1},\cdots,ab_{n}), \\
\phi(a_{1},\cdots,a_{n})\cdot b & =\phi(a_{1}b,\cdots,a_{n}b) 
\end{split}
\]
for any $n$-ary operator $\phi$.

Note that for some $V$, the emptyset may become an ideal
of a commutative monoid object:
for example when $V$ is null.

\begin{Exam}
The followings are algebraic types $V$ with self enhancing property:
$W$ is the algebraic type of commutative monoid object
of $V$-algebras
\begin{enumerate}
\item $V$ is the null algebraic type, namely $V$-algebras are sets.
In this case, $\otimes$ is the cartesian product,
and $W$ is the type of commutative monoids,
in the usual sense.
\item $V$ is the type of pointed sets.
Then $\otimes$ is the smash product,
and $W$ is the type of commutative monoids
with an absorbing element.
\item $V$ is the type of abelian groups.
Then $\otimes$ is the usual tensor product, and
$W$ is the type of commutative rings.
\item $V$ is the type of commutative 
idempotent monoids with absorbing elements.
$W$ is the type of semirings,
except that we do not assume that $1$ is absorbing
with respect to $+$. 
\end{enumerate}
\end{Exam}

\begin{Def}
\label{def:congruence}
Let $R$ be a $V$-algebra.
\begin{enumerate}
\item
A \textit{congruence} $\equiv$ of $R$ is an equivalence
relation induced by a morphism $f:R \to A$ of $V$-algebras:
\[
a \equiv b \Leftrightarrow f(a)=f(b).
\]
\item We have a left adjoint $\func{rad}:\cat{Semiring} \to \cat{DLat}$
of the underlying functor $\cat{DLat} \to \cat{Semiring}$.
The unit morphism $\epsilon:I(R) \to \func{rad}(I(R))$ induces a congruence $E$ on $I(R)$.
The \textit{radical ideal} of $R$ is an ideal $\mathfrak{j}$
of $R$ such that
\[
a \in \mathfrak{j} \Leftrightarrow (a,0) \in E.
\]
\end{enumerate}
\end{Def}

\begin{Lem}
\label{lem:equiv:dlat}
Let $R$ be a semiring, and $a,b \in R$.
Then, $a=b$ in $\func{rad}(R)$ if and only if there is an integer $n$
such that $a^{n} \leq b$ and $b^{n} \leq a$.
\end{Lem}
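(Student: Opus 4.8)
The plan is to realize $\func{rad}(R)$ explicitly as a quotient of $R$ and to identify the resulting congruence with the stated relation. Since a distributive lattice is precisely a semiring with idempotent multiplication, the reflection $\func{rad}(R)$ is $R/\sim$, where $\sim$ is the smallest semiring congruence on $R$ for which $x^{2}\sim x$ for every $x$. I would introduce a competing relation $\approx$ by declaring $a\approx b$ when there is an integer $n$ with $a^{n}\leq b$ and $b^{n}\leq a$, and the whole proof amounts to showing $\sim\,=\,\approx$.

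Before comparing the two relations I would record the elementary order facts available in any idempotent semiring (recall $x\leq y$ means $x+y=y$): since $1$ is absorbing for $+$, one has $ab\leq a$ and $ab\leq b$; multiplication is monotone in each variable; and consequently the powers are non-increasing, $a^{n+1}\leq a^{n}$. These make $\approx$ visibly reflexive and symmetric, and transitivity follows by raising the defining inequalities to suitable powers: from $a^{n}\leq b$ and $b^{m}\leq c$ one gets $a^{nm}=(a^{n})^{m}\leq b^{m}\leq c$, and symmetrically $c^{nm}\leq a$.

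The first substantial step is to verify that $\approx$ is a semiring congruence. Compatibility with multiplication is easy: if $a^{n}\leq a'$ and $b^{m}\leq b'$, then taking $k=\max(n,m)$ gives $(ab)^{k}=a^{k}b^{k}\leq a'b'$ by monotonicity. Compatibility with addition is the part I expect to be the main obstacle, and it rests on the idempotent binomial expansion
\[
(a+b)^{N}=\sum_{i+j=N}a^{i}b^{j},
\]
where the usual coefficients collapse because $x+x=x$. Given $a^{n}\leq a'$ and $b^{m}\leq b'$, I would take $N=n+m-1$: for each monomial $a^{i}b^{j}$ with $i+j=N$ one cannot have both $i<n$ and $j<m$, so either $a^{i}\leq a^{n}\leq a'$ or $b^{j}\leq b^{m}\leq b'$, whence $a^{i}b^{j}\leq a'+b'$; summing yields $(a+b)^{N}\leq a'+b'$, and the symmetric computation gives $(a'+b')^{N}\leq a+b$.

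Finally I would pin down both inclusions. Since $a^{2}\leq a$ (with $a^{2}\leq a^{2}$) gives $a^{2}\approx a$, the quotient $R/\approx$ has idempotent multiplication and hence lies in $\cat{DLat}$; as $\sim$ is the smallest congruence with this property, $\sim\,\subseteq\,\approx$. For the reverse inclusion I use the canonical map $\pi:R\to\func{rad}(R)$ into a distributive lattice, where every image satisfies $\pi(a)^{n}=\pi(a)$; then $a^{n}\leq b$ forces $\pi(a)=\pi(a^{n})\leq\pi(b)$, and likewise $b^{n}\leq a$ gives $\pi(b)\leq\pi(a)$, so $\pi(a)=\pi(b)$, i.e.\ $a\approx b\Rightarrow a\sim b$. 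Combining the two inclusions shows that $a=b$ in $\func{rad}(R)$ exactly when $a^{n}\leq b$ and $b^{n}\leq a$ for some $n$, which is the assertion.
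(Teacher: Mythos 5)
Your proof is correct and takes essentially the same route as the paper's: the paper defines exactly the same relation ($a\equiv b$ iff $a^{n}\leq b$ and $b^{n}\leq a$ for $n\gg 0$), asserts that the semiring structure descends to $A=R/\equiv$ and that $A$ is a distributive lattice, and then identifies $A\simeq\func{rad}(R)$ because the canonical map $R\to\func{rad}(R)$ factors through $A$. Your write-up simply supplies the details the paper leaves implicit --- the presentation of $\func{rad}(R)$ as $R$ modulo the congruence generated by $x^{2}\sim x$, and the idempotent binomial expansion establishing compatibility of the relation with addition.
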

\begin{proof}
Let $\equiv$ be an equivalence relation
on $R$ defined by
\[
a \equiv b \Leftrightarrow a^{n} \leq b, b^{n} \leq a \quad (n \gg 0).
\]
Set $A=R/\equiv$. Then,
the semiring structure of $R$ descends to $A$,
and $A$ becomes a distributive lattice.
Conversely, it is obvious that the canonical map $R \to \func{rad}(R)$
factors through $A$, hence $A \simeq \func{rad}(R)$.
\end{proof}

Let $R \in \cat{CMnd/$\scr{C}$}$ be a commutative monoid object,
$M$ an $R$-module
and $S \subset R$ a multiplicative subset of $R$.
The \textit{localization} $S^{-1}M$ is defined by the universal property
of making the actions of the elements of $S$ invertible.
We can give an explicit construction $S^{-1}M$
in the usual way: $S^{-1}M$ is the quotient set of $M \times S$
divided by the equivalence
\[
(x,s) \equiv (y,t) \Leftrightarrow \exists u \in S, [utx=usy].
\]
As usual, $S^{-1}R$ also becomes a commutative monoid
object, and $S^{-1}M=S^{-1}R \otimes_{R} M$.
Any $R$-multilinear operator $\phi$ on $M$ can be extended
to a $S^{-1}R$-multilinear operator on $S^{-1}M$:
\[
\phi\left(\Frac{x_{1}}{s_{1}},\cdots, \Frac{x_{n}}{s_{n}}\right)
=\Frac{1}{\prod_{i=1}^{n}s_{i}}
\phi\left(\left(\prod_{j \neq 1}s_{j}\right)x_{1}, \cdots,
\left(\prod_{j \neq n}s_{j}\right)x_{n}\right).
\]
If $S$ is generated by a single element $f$, then $S^{-1}R$
is denoted by $R_{f}$.

\begin{Lem}
\label{lem:ideal:derive:op}
Let $R$ be a commutative monoid object in $\scr{C}$,
and $\mathfrak{a}=(f_{1},\cdots,f_{n})$
be a finitely generated ideal of $R$.
Then, the element of $\mathfrak{a}$ can be written in a
form $\phi(a_{1}f_{1},\cdots,a_{r}f_{n_{r}})$,
where $\phi$ is a derived operator in $\scr{C}$
and $a_{i} \in R$.
In particular, for any element $x \in I$,
there exists an $R$-equivariant map
$\psi:R^{\times n} \to R$ such that
$\psi(f_{1},\cdots,f_{n})=x$.
Here, $R^{\times n}$ is a product of $n$-copies of $R$,
regarded as an $R$-module.
\end{Lem}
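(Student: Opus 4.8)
The plan is to identify the finitely generated ideal $\mathfrak{a}=(f_1,\dots,f_n)$ explicitly as the set
\[
S=\{\phi(a_1 f_{i_1},\dots,a_r f_{i_r}) \mid \phi\text{ a derived operator},\ a_j\in R,\ i_j\in\{1,\dots,n\}\},
\]
and then read off the second assertion from this description. First I would check $S\subseteq\mathfrak{a}$: since $\mathfrak{a}$ is an $R$-submodule of $R$ containing each $f_i$, it is closed under the $R$-action (so it contains every $a_j f_{i_j}$) and under the $V$-operations (so it contains $\phi$ applied to elements it already contains); hence every element of $S$ lies in $\mathfrak{a}$.

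The reverse inclusion is the heart of the argument, and I would obtain it by showing that $S$ is itself an $R$-submodule of $R$ containing the generators, so that minimality of $\mathfrak{a}$ forces $\mathfrak{a}\subseteq S$. Containment of the generators is immediate: writing $f_i=\mathrm{id}(1\cdot f_i)$ with $1\in R$ the unit of the monoid object and $\mathrm{id}$ the unary identity, each $f_i$ has the required shape. Closure under the $R$-action uses the distribution law from the set-theoretic description of commutative monoid objects: for $b\in R$,
\[
b\cdot\phi(a_1 f_{i_1},\dots,a_r f_{i_r})=\phi(ba_1 f_{i_1},\dots,ba_r f_{i_r}),
\]
again of the required form. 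Closure under the $V$-operations is where the structure of $\func{free}(\Omega)$ enters: given elements $s_l=\phi_l(\vec v_l)\in S$, where each $\vec v_l$ is a tuple of terms $a f_i$, and a basic (hence derived) $m$-ary operator $\theta$, the composite $\theta\circ(\phi_1\times\cdots\times\phi_m)$ is again a derived operator, since derived operators are closed under composition and finite products; applying it to the concatenation of the tuples $\vec v_1,\dots,\vec v_m$ exhibits $\theta(s_1,\dots,s_m)$ as a member of $S$. This gives $S=\mathfrak{a}$, which is the first assertion.

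For the final clause, given $x=\phi(a_1 f_{i_1},\dots,a_r f_{i_r})\in\mathfrak{a}$ I would define $\psi:R^{\times n}\to R$ by $\psi(g_1,\dots,g_n)=\phi(a_1 g_{i_1},\dots,a_r g_{i_r})$, so that $\psi(f_1,\dots,f_n)=x$ by construction. Its $R$-equivariance with respect to the diagonal action on $R^{\times n}$ follows from the same distribution law together with commutativity of $R$, since $\psi(b\cdot(g_1,\dots,g_n))=\phi(a_1 bg_{i_1},\dots,a_r bg_{i_r})=b\cdot\phi(a_1 g_{i_1},\dots,a_r g_{i_r})=b\cdot\psi(g_1,\dots,g_n)$.

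I expect the main obstacle to be the bookkeeping in the closure-under-operations step: one must argue carefully that an arbitrary element of the submodule $\mathfrak{a}$, a priori built by iterated application of operators and $R$-multiplications to the generators, can always be rewritten with a single outer derived operator, the distribution law being exactly what lets one push every $R$-multiplication inward onto the generators. The degenerate case $n=0$ (where $\mathfrak{a}$ consists of the constants, or is empty when $V$ is null) is absorbed by allowing $r=0$ and nullary $\phi$.
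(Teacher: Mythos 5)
Your proof is correct and is essentially the paper's own argument made explicit: the paper proves this lemma in one line, ``by using the induction on the length of derived operators and the distribution law,'' and your closure-and-minimality argument is exactly that induction (the distribution law pushes scalar multiplications inward onto the generators, while closure of derived operators under composition and tupling handles iterated applications of operations). Your construction of the $R$-equivariant map $\psi$ by substituting variables for the $f_{i}$'s in the normal form $\phi(a_{1}f_{i_{1}},\cdots,a_{r}f_{i_{r}})$, with equivariance following from the distribution law and commutativity, is likewise the intended reading of the ``in particular'' clause, and equivariance is the only property of $\psi$ used in the later applications.
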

This can be easily
proven by using the induction on the length of derived operators
and the distribution law.

\begin{Thm}
Let $R$ be a commutative monoid object in $\scr{C}$,
and suppose $\mathfrak{a}=(f_{i})_{i}$ be a finitely generated ideal 
of $R$ such that $\epsilon(\mathfrak{a})=1$ in $\func{rad}(I(R))$.
Then $\{R_{f_{i}}\}_{i}$ covers $R$, namely
\[
R \to \prod_{i}R_{f_{i}} \stackrel{p_{1},p_{2}}{\rightrightarrows} 
\prod_{i,j}R_{f_{i}f_{j}}
\]
is exact.
\end{Thm}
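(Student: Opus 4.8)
The plan is to prove exactness of the fork by reducing it to a purely lattice-theoretic statement about covers, using the hypothesis $\epsilon(\mathfrak{a}) = 1$ in $\func{rad}(I(R))$ together with Lemma \ref{lem:equiv:dlat} and Lemma \ref{lem:ideal:derive:op}. The condition $\epsilon(\mathfrak{a}) = 1$ in $\func{rad}(I(R))$ says, by Lemma \ref{lem:equiv:dlat} applied to the semiring $I(R)$ of finitely generated ideals, that there is an integer $n$ with $1^{n} = 1 \leq \mathfrak{a}$ and $\mathfrak{a}^{n} \leq 1$; the second is automatic, so the content is that $\mathfrak{a} = (f_{i})_{i}$ is the unit ideal up to radical, i.e.\ some power condition forces $1 \in \mathfrak{a}$ after passing to radicals. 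I would first unwind this to the concrete statement that the elements $f_{i}$ generate the unit ideal in an effective sense: by Lemma \ref{lem:ideal:derive:op}, since $1$ lies in (the radical of) $\mathfrak{a}$, there is a derived operator $\phi$ and elements $a_{i} \in R$ with $\phi(a_{1}f_{1},\dots,a_{r}f_{n_{r}})$ equal to a unit, equivalently an $R$-equivariant map $\psi : R^{\times n} \to R$ with $\psi(f_{1},\dots,f_{n})$ a unit.

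First I would establish injectivity of $R \to \prod_{i} R_{f_{i}}$, i.e.\ that an element $x \in R$ mapping to $0$ in every $R_{f_{i}}$ must be $0$. By the construction of localization recalled before the statement, $x/1 = 0$ in $R_{f_{i}}$ means $f_{i}^{m_{i}} x = 0$ for some $m_{i}$; choosing a common exponent $m$, we get $f_{i}^{m} x = 0$ for all $i$. Now I would apply the equivariant map from Lemma \ref{lem:ideal:derive:op} to the powers $f_{i}^{m}$: since $1$ is in the radical of $\mathfrak{a}$, it is also in the radical of the ideal generated by the $f_{i}^{m}$, so there is an $R$-equivariant $\psi'$ with $\psi'(f_{1}^{m},\dots,f_{n}^{m})$ a unit $u$. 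Multiplying through by $x$ and using equivariance (the distribution law for commutative monoid objects stated in the set-theoretic description) gives $u x = \psi'(f_{1}^{m}x,\dots,f_{n}^{m}x) = \psi'(0,\dots,0) = 0$, whence $x = 0$ since $u$ is a unit. This is the step I expect to require the most care, because I must ensure the derived operator $\psi'$ genuinely commutes past multiplication by $x$; this is exactly guaranteed by the distribution laws $a \cdot \phi(b_{1},\dots,b_{n}) = \phi(ab_{1},\dots,ab_{n})$ recorded for monoid objects, but one must check $\psi'$ is built only from such operators and $R$-scalar multiplications.

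Next I would prove exactness at the middle term, i.e.\ that a family $(x_{i})_{i}$ with $x_{i} \in R_{f_{i}}$ agreeing on the overlaps $R_{f_{i}f_{j}}$ descends to a global element of $R$. This is the standard gluing argument: write each $x_{i} = y_{i}/f_{i}^{N}$ with a common $N$, so that the compatibility on $R_{f_{i}f_{j}}$ reads $f_{j}^{N}y_{i} = f_{i}^{N}y_{j}$ up to a uniform power of $f_{i}f_{j}$, which after enlarging $N$ we may take to hold on the nose. Using again that $1$ lies in the radical of the ideal generated by $(f_{i}^{N})_{i}$, Lemma \ref{lem:ideal:derive:op} furnishes an $R$-equivariant $\chi$ with $\chi(f_{1}^{N},\dots,f_{n}^{N})$ a unit $v$; I would set $x = v^{-1}\chi(y_{1},\dots,y_{n})$ and verify, by multiplying by $f_{k}^{N}$ and invoking the compatibility relations together with equivariance, that $x$ restricts to $x_{k}$ in each $R_{f_{k}}$.

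The main obstacle throughout is that we are not working with rings but with commutative monoid objects in an arbitrary self-enhancing algebraic category, so ordinary ``partition of unity'' identities $\sum a_{i}f_{i} = 1$ are unavailable and must be replaced everywhere by the derived-operator formalism of Lemma \ref{lem:ideal:derive:op} and the radical computation of Lemma \ref{lem:equiv:dlat}. Consequently every manipulation that in the classical ring case is a linear-algebra triviality instead becomes an application of equivariance of a derived operator against the monoid multiplication, and the crux is verifying that these operators behave correctly under localization and under multiplication by a fixed scalar, which is precisely what the distribution laws for monoid objects secure.
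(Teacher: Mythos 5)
Your gluing step (exactness at $\prod_{i}R_{f_{i}}$) is correct and is essentially the paper's own argument: pass to common denominators so that the cross relations $f_{j}^{N}y_{i}=f_{i}^{N}y_{j}$ hold on the nose, use Lemma \ref{lem:equiv:dlat} and Lemma \ref{lem:ideal:derive:op} to produce an $R$-equivariant map $\chi$ with $\chi(f_{1}^{N},\dots,f_{n}^{N})$ invertible, apply $\chi$ to the numerators, and verify the restriction identities by equivariance (the distribution laws). The paper arranges $\chi(f_{1}^{N},\dots,f_{n}^{N})=1$ exactly, so your unit $v$ never appears, but that difference is cosmetic. One small point of wording: $\epsilon(\mathfrak{a})=1$ in $\func{rad}(I(R))$ forces $1\leq \mathfrak{a}$ in $I(R)$, i.e.\ $\mathfrak{a}$ \emph{is} the unit ideal on the nose (for the top element, ``up to radical'' collapses), which is exactly why Lemma \ref{lem:ideal:derive:op} applies directly to $1\in\mathfrak{a}$.

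The genuine gap is in your injectivity step. You reduce injectivity of $R\to\prod_{i}R_{f_{i}}$ to ``$x$ maps to $0$ in every $R_{f_{i}}$ implies $x=0$''. That reduction is a ring-theoretic reflex which is not available in this theorem's generality, for two reasons. First, here $V$ is only assumed self enhancing: there is no constant operator, so a $W$-algebra $R$ need not contain any element $0$ at all (take $V$ the null type, so that $R$ is an ordinary commutative monoid; the hypothesis that $V$ has a constant operator $0$ only enters later, in Theorem \ref{thm:ideal:top:mod}). Second, even when a zero exists, triviality of the kernel does not imply injectivity outside additive settings: a homomorphism of monoids, or of general algebras, can identify two distinct elements without sending any nonzero element to zero. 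Since exactness of the fork means precisely that $R\to\prod_{i}R_{f_{i}}$ is an equalizer, hence injective on underlying sets, the step as you wrote it fails. The repair stays entirely inside your own framework: given $x,y\in R$ with $x/1=y/1$ in every $R_{f_{i}}$, the explicit description of localization gives a common $m$ with $f_{i}^{m}x=f_{i}^{m}y$ for all $i$; choosing $\psi$ $R$-equivariant with $\psi(f_{1}^{m},\dots,f_{n}^{m})=1$, equivariance yields $x=\psi(f_{1}^{m},\dots,f_{n}^{m})\cdot x=\psi(f_{1}^{m}x,\dots,f_{n}^{m}x)=\psi(f_{1}^{m}y,\dots,f_{n}^{m}y)=y$. (For what it is worth, the paper's proof silently omits the injectivity half altogether, so your instinct to include it is sound; only the kernel formulation must be abandoned.)
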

\begin{proof}
Let $u=(x_{i}/f_{i}^{n_{i}})_{i} \in \prod_{i} R_{f_{i}}$ be an element
of the equalizer, namely $p_{1}(u)=p_{2}(u)$.
This is equivalent to saying that
$x_{i}/f_{i}^{n_{i}}=x_{j}/f_{j}^{n_{j}}$ in $R_{f_{i}f_{j}}$.
We may assume that $f_{j}x_{i}=f_{i}x_{j}$ by
replacing $f_{i}$'s by sufficiently large powers;
this is a standard method, and can be seen in Hartshorne 
(\cite{Harts}, II 2.2).
Since $(f_{i})_{i}^{N}=1$ in $I(R)$
for sufficiently large $N$
by Lemma \ref{lem:equiv:dlat}, we have $\psi(f_{1},\cdots,f_{n})=1$
for some $R$-equivariant map $\psi:R^{\times n} \to R$
by Lemma \ref{lem:ideal:derive:op}.
Set $x=\psi(x_{1},\cdots,x_{n}) \in R$. Then,
\[
f_{i}x=\psi(f_{i}x_{j})_{j}=\psi(f_{j}x_{i})_{j}=x_{i}\psi(f_{j})_{j}=x_{i}.
\]
This implies that $x=x_{i}/f_{i}$ in $R_{f_{i}}$ so that
$u$ is in the image of $R \to \prod_{i}R_{f_{i}}$.
\end{proof}

\begin{Lem}
\label{lem:sieve:cond:local}
Let $R$ be a commutative monoid object in $\scr{C}$,
and $f_{i},g_{j}$ elements of $R$.
If $(f_{i})_{i}=1$ in $I(R)$ and
$(f_{i}g_{j})_{j}=1$ in $I(R_{f_{i}})$ for any $i$,
then $(g_{j})_{j}=1$ in $I(R)$.
\end{Lem}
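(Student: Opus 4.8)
The plan is to prove that the finitely generated ideal $\mathfrak{b}=(g_{j})_{j}$ equals the unit ideal $1=R$ in $I(R)$, and I would split this into a local step and a global step. First I would simplify the hypothesis: since $f_{i}$ becomes invertible in $R_{f_{i}}$, the families $\{f_{i}g_{j}\}_{j}$ and $\{g_{j}\}_{j}$ generate the same submodule there, so the assumption $(f_{i}g_{j})_{j}=1$ in $I(R_{f_{i}})$ is the same as $1\in(g_{1},\dots,g_{m})R_{f_{i}}$. The local step extracts, for each $i$, a power relation $f_{i}^{P_{i}}\in\mathfrak{b}$ inside $R$; the global step then patches these relations together using $(f_{i})_{i}=1$ to force $\mathfrak{b}=R$.

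For the local step, fix $i$ and apply Lemma \ref{lem:ideal:derive:op} to $R_{f_{i}}$ and the ideal $(g_{1},\dots,g_{m})R_{f_{i}}$: as $1$ lies in this ideal, there are a derived operator $\phi$ and coefficients $a_{1},\dots,a_{r}\in R_{f_{i}}$ with $1=\phi(a_{1}g_{k_{1}},\dots,a_{r}g_{k_{r}})$. Writing each $a_{l}=b_{l}/f_{i}^{N}$ over a common denominator and applying the formula for extending a derived operator to a localization, I would clear denominators to get $1=y_{i}/f_{i}^{rN}$ in $R_{f_{i}}$, where $y_{i}=\phi(f_{i}^{(r-1)N}b_{1}g_{k_{1}},\dots,f_{i}^{(r-1)N}b_{r}g_{k_{r}})$ is an element of $R$ obtained by applying $\phi$ to $R$-multiples of the $g$'s, hence lying in $\mathfrak{b}$ by Lemma \ref{lem:ideal:derive:op} again. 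By the explicit description of the localization equivalence, $1=y_{i}/f_{i}^{rN}$ means $f_{i}^{L+rN}=f_{i}^{L}y_{i}$ in $R$ for some power $L$; since $f_{i}^{L}y_{i}\in\mathfrak{b}$, this gives $f_{i}^{P_{i}}\in\mathfrak{b}$ with $P_{i}=L+rN$. I expect this denominator-clearing to be the main obstacle: one must verify that the operator-localization formula genuinely returns the combination to $R$ and keeps it inside $\mathfrak{b}$, which is precisely where the commutative-monoid-object structure and the multilinearity of the operators enter.

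For the global step I would pass to the distributive lattice $\func{rad}(I(R))$ via the unit $\epsilon:I(R)\to\func{rad}(I(R))$. From $f_{i}^{P_{i}}\in\mathfrak{b}$ we get $(f_{i})^{P_{i}}\leq\mathfrak{b}$ in $I(R)$, and since multiplication in $\func{rad}(I(R))$ is idempotent, $\epsilon((f_{i}))=\epsilon((f_{i})^{P_{i}})\leq\epsilon(\mathfrak{b})$. As $\epsilon$ is a semiring homomorphism and $(f_{i})_{i}=\sum_{i}(f_{i})$, taking the join over $i$ gives $1=\epsilon((f_{i})_{i})=\bigvee_{i}\epsilon((f_{i}))\leq\epsilon(\mathfrak{b})$, so $\epsilon(\mathfrak{b})=1$. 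Finally Lemma \ref{lem:equiv:dlat}, together with the order on $I(R)$ in which $1=R$ is the maximum, shows that $\epsilon(\mathfrak{b})=1$ forces $1\leq\mathfrak{b}$, that is $\mathfrak{b}=R$, which is the assertion. Alternatively one can bypass $\func{rad}$ entirely: choosing $N>\sum_{i}(P_{i}-1)$, every generating product of $(f_{i})_{i}^{N}=R$ contains some $f_{i_{0}}^{P_{i_{0}}}$ as a factor by pigeonhole and hence lies in $\mathfrak{b}$, so $R=(f_{i})_{i}^{N}\subseteq\mathfrak{b}$.
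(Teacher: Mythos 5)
Your proof is correct, and its core coincides with the paper's: both arguments apply Lemma \ref{lem:ideal:derive:op} in $R_{f_{i}}$ together with the multilinear localization formula to clear denominators and land a power $f_{i}^{P_{i}}$ inside $\mathfrak{b}=(g_{j})_{j}$. The difference is the global wrap-up. The paper stays at the level of operators: from $(f_{i})_{i}=1$ it asserts $(f_{i}^{N})_{i}=1$ for a uniform power $N$ (this step is left unjustified there), picks a derived operator $\phi$ with $\phi(b_{l}f_{i_{l}}^{N})_{l}=1$, and substitutes the local relations $f_{i_{l}}^{N}=\tilde{\psi}_{i_{l}}(\cdots)$ to exhibit $1$ explicitly as an $R$-equivariant combination of the $g_{j}$'s, hence $1\in\mathfrak{b}$. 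You instead finish lattice-theoretically, pushing the relations $(f_{i})^{P_{i}}\leq\mathfrak{b}$ into $\func{rad}(I(R))$, where idempotence of multiplication and Lemma \ref{lem:equiv:dlat} force $\mathfrak{b}=1$ back in $I(R)$; your pigeonhole alternative ($(f_{i})_{i}^{N}\subseteq\mathfrak{b}$ for $N$ large) achieves the same without leaving $I(R)$. Both finishes are valid, and they carry a bonus: the pigeonhole (or radical) argument is exactly what is needed to justify the paper's unproved claim that $(f_{i}^{N})_{i}=1$, so on this point your write-up is the more complete one. What the paper's finish buys is constructive explicitness --- it produces the actual $R$-equivariant map $\gamma$ with $\gamma(g_{j})_{j}=1$ --- while yours is shorter and isolates the purely order-theoretic content of the final step.
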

\begin{proof}
For every $i$, we have a $R$-equivariant map
$\psi_{i}:R_{f_{i}}^{n} \to R_{f_{i}}$ such that
$\psi_{i}(f_{i}g_{j})_{j}=1$ in $R_{f_{i}}$.
Multiplying by some power of $f_{i}$ if necessary,
we obtain a derived operator $\tilde{\psi}_{i}:R^{n} \to R$
such that $\tilde{\psi}_{i}(a_{m}g_{j_{m}})_{m}=f_{i}^{N}$ for some $N$
and some $a_{m} \in R$.
Since $(f_{i})_{i}=1$ in $I(R)$,
we also have $(f_{i}^{N})_{i}=1$.
Hence, there is a derived operator $\phi:R^{m} \to R$
such that $\phi(b_{l}f_{i_{l}}^{N})_{i}=1$ for some $b_{l} \in R$.
Composing $\tilde{\psi}_{i}$'s and $\phi$ gives
an $R$-equivariant map $\gamma:R^{n} \to R$
such that $\gamma(g_{j})_{j}=1$, hence the result.
\end{proof}

\begin{Cor}
\label{cor:zar:top}
Let $\scr{E}$ be the lluf subcategory of $\scr{C}^{\op}$
consisting of localizations of finite type.
For each $R \in \scr{C}$,
let $\mathcal{O}_{R}$ be the family of sets $\{R_{f_{i}} \to R\}_{i}$
of $\scr{E}$-morphisms such that the ideal $(f_{i})_{i}$ is unital.
Then, $(\scr{C}^{\op},\scr{E},\mathcal{O})$
satisfies the axioms of coherent sites,
and the induced spectrum functor $\Spec^{0}:\scr{C}^{\op} \to \cat{Coh}$
is the desired one:
\[
\Spec^{0}R=\func{pt}\func{comp}\func{rad}(I(R)).
\]
\end{Cor}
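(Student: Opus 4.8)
The plan is to verify the three separate assertions of Corollary \ref{cor:zar:top} in turn: that $(\scr{C}^{\op},\scr{E},\mathcal{O})$ is a coherent site, and that the resulting $\Spec^{0}$ agrees with $\func{pt}\func{comp}\func{rad}(I(R))$. First I would check the axioms of Definition \ref{def:groth:top} for the proposed topology. The descent datum (1) is precisely the exactness of
\[
R \to \prod_{i}R_{f_{i}} \rightrightarrows \prod_{i,j}R_{f_{i}f_{j}},
\]
which is exactly the content of the Theorem proved just above (noting that coproducts in $\scr{C}^{\op}$ are products in $\scr{C}$, and fiber products over $R$ in $\scr{C}^{\op}$ are the localizations $R_{f_{i}f_{j}}$). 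So (1) is free. For the Grothendieck topology axioms (2), conditions (a) and (b) are immediate from the definition of $\mathcal{O}$ together with the fact that $(f_{i})_{i}$ unital is preserved under enlarging the family. Condition (c), stability under base change, follows because localization commutes with the pushouts computing $R \times_{Y} U$, and the unital ideal $(f_{i})$ pulls back to a unital ideal. The only nontrivial axiom is the local character (d): this is exactly Lemma \ref{lem:sieve:cond:local}, which says that if $(f_{i})_{i}=1$ in $I(R)$ and each $(f_{i}g_{j})_{j}=1$ in $I(R_{f_{i}})$, then $(g_{j})_{j}=1$ in $I(R)$. Translating the sieve condition of (d) into ideal-theoretic language, this Lemma is precisely what is needed.

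Next I would verify the remaining structural conditions on $\scr{E}$ from the Definition preceding \ref{def:groth:top}: closure under isomorphism and base change, existence of pushouts (these are the localizations, formed in $\scr{C}$), flatness and monicity of each $R_{f} \to R$ (monic and flat because localization of a module is exact and a localization map is an epimorphism in $\scr{C}$, hence monic in $\scr{C}^{\op}$), and the existence of coequalizers for the descent diagrams. Most of these are routine consequences of the explicit construction of localization given earlier in this section, so I would dispatch them briefly.

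Finally, for the identification $\Spec^{0}R=\func{pt}\func{comp}\func{rad}(I(R))$, the strategy is to show that the distributive lattice $\Omega_{1}(R)$ constructed in Definition (the spectrum construction) is isomorphic to $\func{rad}(I(R))$, after which applying $\func{pt}\func{comp}$ gives the claim by definition of $\Spec^{0}$. The natural map sends a finite family $\{R_{f_{i}} \to R\}_{i} \in \Omega_{0}(R)$ to the ideal $(f_{i})_{i} \in I(R)$, and then to its image in $\func{rad}(I(R))$; this respects $\vee$ and $\wedge$ since join of families corresponds to sum of ideals and meet corresponds to product. The key point is that this descends to an isomorphism on $\Omega_{1}(R)=\Omega_{0}(R)/\equiv$: I would show that $\{U_{i}\} \prec \{V_{j}\}$ holds in $\Omega_{0}(R)$ if and only if $(f_{i}) \subseteq \sqrt{(g_{j})}$ in the radical sense, i.e. the corresponding inequality in $\func{rad}(I(R))$, using Lemma \ref{lem:equiv:dlat} to translate the radical equivalence $a^{n}\leq b$, $b^{n}\leq a$ into the covering relation. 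The main obstacle I anticipate is exactly this last equivalence: unwinding the refinement relation $\prec$ (which is phrased via the topology $\mathcal{O}$ on the localizations $U_{i}$) and matching it cleanly with the radical-ideal order via Lemma \ref{lem:equiv:dlat}. Once that correspondence is established, well-definedness and bijectivity of the lattice isomorphism are formal, and the functoriality in $R$ follows from compatibility of localization with the ideal pullback $\mathfrak{a}\mapsto \mathfrak{a}\cdot R_{f}$.
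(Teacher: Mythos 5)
Your proposal is correct and follows essentially the same route as the paper's own proof: descent from the preceding Theorem, axiom (d) from Lemma \ref{lem:sieve:cond:local}, routine verification of the remaining site axioms, and then the identification $\Spec^{0}R=\func{pt}\func{comp}\func{rad}(I(R))$ via the lattice isomorphism $\Omega_{1}(R)\simeq\func{rad}(I(R))$ given by $\{R_{f_{i}}\to R\}_{i}\mapsto (f_{i})_{i}$. The only difference is one of detail: the paper simply asserts well-definedness and bijectivity of this correspondence as straightforward, whereas you explicitly flag the needed equivalence between the refinement relation $\prec$ and the radical-ideal order via Lemma \ref{lem:equiv:dlat}, which is exactly the right point to check.
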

In the sequel, we will refer to this
Grothendieck topology as the \textit{Zariski topology}.

\begin{proof}
First, we show that $(\scr{C}^{\op},\scr{E},\mathcal{O})$
is indeed a coherent site.
It is easy to see that $R_{f} \to R$ is indeed monic
and flat in $\scr{C}^{\op}$.
The descent condition is satisfied by definition,
hence we only need to verify that $\mathcal{O}$
is a Grothendieck topology.
The only non-trivial statement is condition (d)
in Definition \ref{def:groth:top},
but this is proved in Lemma \ref{lem:sieve:cond:local}.

Next, we show that the topology coincides
with that obtained from ideals.
Let $R$ be a commutative monoid object of $\scr{C}$,
and $U=\{R_{f_{i}} \to R\}_{i}$ a quasi-compact open set
of $\Spec^{0}(R)$.
We define the corresponding radical ideal $\varphi(U) \in \func{rad}(I(R))$
to be the ideal generated by $(f_{i})_{i}$.
This does not depend on the representation of $U$,
hence $\varphi:\Omega_{1}(R) \to \func{rad}(I(R))$ is well defined.
Conversely, let $\mathfrak{a}=(f_{i})_{i}\in \func{rad}(I(R))$ be a radical ideal.
The quasi-compact open set $\psi(\mathfrak{a})$
is defined by $\{R_{f_{i}} \to R\}$.
This again does not depend on the choice of the generators
of $\mathfrak{a}$, hence we have a bijection
between $\Omega_{1}(R)$ and $\func{rad}(I(R))$.
Also, it is straightforward to see that this is a lattice
isomorphism, and that it is functorial with respect to $R$.
\end{proof}

\begin{Rmk}
For some algebraic category $\scr{C}$ with its Zariski topology,
$\scr{C}$ is already schematic, namely it is vacuous to consider
$\scr{C}$-schemes.
For the followings, $\scr{C}$ is the opposite category of $W$-algebras
(with the Zariski topology; see Corollary \ref{cor:zar:top}).
The category $\scr{C}$ becomes schematic, if any localization is surjective.
The following cases are such:
\begin{enumerate}
\item $W$ is the type of distributive lattices,
\item $W$ is the type of idempotent semirings,
\item $W$ is the algebraic type of von Neumann regular commutative rings:
a commutative ring $R$ is \textit{von Neumann regular},
if any element $x\in R$ has a \textit{weak inverse} $y \in R$,
namely the unique element $y \in R$
which satisfies $x^{2}y=x$ and $xy^{2}=y$.
The Krull dimension of a von Neumann regular ring is $0$,
hence any localization becomes surjective.
This is related to Serre's cohomological criterion of affineness: 
a scheme $X$ is affine
if and only if $H^{i}(X, \scr{F})=0$ for 
any quasi-coherent sheaf $\scr{F}$ and $i>0$.

Hence, any von Neumann regular scheme is affine,
since its Krull dimension is $0$.
\end{enumerate}
\end{Rmk}

\subsection{Comparison with classical schemes}

In this section, we discuss local objects
of a complete coherent site
(a coherent site $(\scr{C},\scr{E},\mathcal{O})$ is \textit{complete}
if $\scr{C}$ is complete),
and show that the notion of $\scr{C}$-scheme coincides with
\begin{enumerate}
\item the notion of coherent schemes, when $\scr{C}$
is the opposite category of rings, and
\item the notion of monoid schemes,
in the sense of To\"{e}n-Vaqui\'{e}, or Deitmar,
when $\scr{C}$ is the opposite category of commutative monoids.
\end{enumerate}

\begin{Def}
Let $\scr{C}$ be a complete coherent site.
\begin{enumerate}
\item An object $A \in \scr{C}$ is \textit{local},
if any covering of $A$ is principal, namely:
$U_{i}=A$ for some $i$ if $\{U_{i} \to A\}_{i}$ is a covering of $A$.
This is equivalent to saying that $\Spec^{0}(A)$
has a unique closed point.
\item A morphism $f:A \to B$
between two local objects $A, B \in \scr{C}$
is \textit{local}, if the induced morphism $f:\Spec^{0} A \to \Spec^{0}B$
sends the unique closed point of $\Spec^{0}A$ to that of $\Spec^{0}B$.
In the language of distributive lattices, this is equivalent to
\[
f^{-1}(U)=1 \Rightarrow U=1
\] 
for any quasi-compact open $U \subset B$.
\item A $\scr{C}$-coherent space $(X, \scr{O}_{X})$
is \textit{local}, if the stalk $\scr{O}_{X,x}=\varprojlim_{x \in U}\scr{O}_{X}(U)$
is local for any $x \in X$.
\item A morphism $f:X \to Y$ of $\scr{C}^{\op}$-spaces
is \textit{local}, if the induced morphism $f:\scr{O}_{X,x} \to \scr{O}_{Y,f(x)}$
on the stalks is local.
\item We denote by $\cat{$\scr{C}$-LCS}$ the category
of locally $\scr{C}^{\op}$-spaces and local morphisms.
\end{enumerate}
\end{Def}

The following proposition shows that the category of $\scr{C}$-schemes
coincides with the notion of coherent schemes, when
$\scr{C}^{\op}$ is the opposite category of commutative rings: 
\begin{Thm}
\label{thm:sch:local:space}
We have a natural fully faithful functor
$\cat{w. $\scr{C}$-Sch} \to \cat{$\scr{C}$-LCS}$.
\end{Thm}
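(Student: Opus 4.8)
The plan is to define $G:\cat{w.$\scr{C}$-Sch} \to \cat{$\scr{C}$-LCS}$ as the functor that forgets the support morphism, sending an object $(X,\scr{O}_{X},\beta_{X})$ to the underlying $\scr{C}^{\op}$-coherent space $(X,\scr{O}_{X})$ and a morphism $(f,f^{\#})$ to the same pair $(f,f^{\#})$. Since a morphism of weak $\scr{C}$-schemes is, by Definition \ref{def:weak:sch}, nothing but a morphism of the underlying $\scr{C}^{\op}$-spaces subject to commutativity of the support square, the assignment on morphisms is the identity on the data and $G$ is tautologically faithful; the content of the theorem is therefore concentrated in three points: (i) the underlying space $(X,\scr{O}_{X})$ is local, (ii) $G$ lands in local morphisms, and (iii) every local morphism between the underlying spaces of two weak $\scr{C}$-schemes already satisfies the support square, i.e. $G$ is full. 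I will first record the one lattice-theoretic fact that drives everything: in a sober, hence in a coherent, space the germ lattice $\tau_{X,x}=\varinjlim_{x\in U}\Omega_{c}(U)$ at a point $x$ has a join-prime top element, because a point lies in a finite union of opens precisely when it lies in one of them.

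For (i), fix $x\in X$ and set $R=\scr{O}_{X,x}=\varprojlim_{x\in U}\scr{O}_{X}(U)$. Given a covering $\{A_{k}\to R\}_{k}$, its class is $1$ in $\sigma_{X,x}=\Omega_{1}(R)$, so applying the stalk $\beta_{X,x}$ of the support morphism—a homomorphism of distributive lattices, hence top-preserving—gives $\bigvee_{k}\beta_{X,x}(A_{k})=1$ in $\tau_{X,x}$. By join-primeness some $\beta_{X,x}(A_{k})$ already equals $1$, which means $\beta_{X}(U)(A_{k})=U$ for a small enough neighbourhood $U\ni x$. The factoring condition in Definition \ref{def:weak:sch}, applied with $V=U$, then forces the identity $\scr{O}_{X}(U)\to\scr{O}_{X}(U)$ to factor through the $\scr{E}$-morphism $A_{k}\to\scr{O}_{X}(U)$; being simultaneously a split epimorphism and monic, this morphism is an isomorphism, so $A_{k}\to R$ is an isomorphism on stalks. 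Thus every covering of $R$ is principal, $R$ is local, and $(X,\scr{O}_{X})\in\cat{$\scr{C}$-LCS}$.

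Points (ii) and (iii) I would subsume into the single equivalence: for a morphism $(f,f^{\#})$ of the underlying $\scr{C}^{\op}$-spaces, the support square commutes if and only if $f$ is local. The square is an equality of morphisms of $\cat{DLat}^{\op}$-valued sheaves, and since coherent spaces have enough points it may be checked on stalks. Writing $\phi:\scr{O}_{Y,y}\to\scr{O}_{X,x}$ for the stalk map at $x$ with $y=f(x)$, commutativity reads $\beta_{X,x}(\phi^{*}B_{g})=f^{-1}\beta_{Y,y}(B_{g})$ for each basic open $B_{g}$, $g\in\scr{O}_{Y,y}$. Testing whether each side equals $1$ recovers ``$\phi(g)$ is a unit'' on the left and ``$g$ is a unit'' on the right. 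For an arbitrary $\phi$ the implication ``$g$ unit $\Rightarrow\phi(g)$ unit'' is automatic, so commutativity immediately yields the converse implication, which is exactly locality of $\phi$; this gives the direction needed for (ii).

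For the converse direction—fullness—I would upgrade the comparison of top-levels to a genuine equality of germs. The mechanism is that $\beta_{X,x}$ identifies the germ of $X$ at $x$ with $\Spec^{0}\scr{O}_{X,x}$ near its unique closed point (namely $x$), compatibly with the unit of the $\Spec$–$\Gamma$ adjunction; under this identification the two composites of the square become the two ways of pulling back quasi-compact opens, along $\Spec^{0}\phi$ and along $f$, and these agree once $\Spec^{0}\phi$ sends the closed point of $\Spec^{0}\scr{O}_{X,x}$ to that of $\Spec^{0}\scr{O}_{Y,y}$—precisely the hypothesis that $\phi$ is local. I expect this to be the main obstacle: because weak $\scr{C}$-schemes are not assumed locally affine (Remark \ref{rmk:sch:cat:incomp}), one cannot retreat to an affine patch and must instead extract the germ-level identification $\tau_{X,x}\simeq\Omega_{c}(\Spec^{0}\scr{O}_{X,x})$ around $x$ directly from the factoring condition, and verify its naturality in $f^{\#}$. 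Once this identification is in place, faithfulness is formal, (ii) and (iii) follow from the equivalence above, and $G$ is the desired fully faithful functor.
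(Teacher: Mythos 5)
Your decomposition (forgetful functor; faithfulness tautological; (i) stalks are local; (ii)$+$(iii) ``support square commutes $\Leftrightarrow$ the stalk morphisms are local'') is the paper's, and your parts (i) and (ii) are essentially correct and essentially the paper's argument: join-primeness of the top element of the germ lattice $\tau_{X,x}$, combined with the factoring condition (split epimorphism $+$ monic $\Rightarrow$ isomorphism), is exactly how the paper shows that stalks are local and, implicitly, that the stalk maps $\beta_{X,x}$ \emph{reflect} the top element. The genuine gap is in (iii), fullness. The identification $\tau_{X,x}\simeq\Omega_{c}(\Spec^{0}\scr{O}_{X,x})\simeq\Omega_{1}(\scr{O}_{X,x})$ that you propose to ``extract from the factoring condition'' does not exist for weak $\scr{C}$-schemes; it is a feature of locally affine objects only. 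Concretely, take $\scr{C}=\cat{CRing}^{\op}$ with the Zariski topology, let $|X|$ be a single point, $\scr{O}_{X}(|X|)=\ZZ_{(p)}$, and let $\beta_{X}$ send a class $A\in\Omega_{1}(\ZZ_{(p)})$ to $|X|$ exactly when $A=1$ (i.e.\ $\beta_{X}$ is the closed point of $\Spec^{0}\ZZ_{(p)}$, viewed as a lattice homomorphism $\Omega_{1}(\ZZ_{(p)})\to\{0,1\}$). The factoring condition holds, so this is a weak $\scr{C}$-scheme; yet $\tau_{X,x}=\{0,1\}$ while $\Omega_{1}(\ZZ_{(p)})$ is the three-element chain $0<\{\Spec\QQ\}<1$, so $\beta_{X,x}$ is not injective: it kills the germ of the punctured spectrum. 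Thus the ``main obstacle'' you flagged is not merely technical; the identification your fullness proof hinges on is false, and one cannot transport the commutativity question into $\Omega_{1}$-lattices and settle it by locality of $\Spec^{0}\phi$.

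The repair, which is what the paper actually does, is to never ask $\beta_{X,x}$ to be an isomorphism, only to use the top-reflection you already established in (i). The identity to be proved, $f^{-1}(\beta_{Y}(U)(V))=\beta_{X}(f^{\#}(U)(V))$, is an equality of quasi-compact open subsets of a spatial (coherent) space, so it may be checked point by point; a point $x$ lies in such an open set iff the corresponding germ equals $1$. For $x\in\beta_{X}(f^{\#}(U)(V))$ one runs: germ $=1$ in $\tau_{X,x}$ $\Rightarrow$ image of $V$ is $1$ in $\Omega_{1}(\scr{O}_{X,x})$ (top-reflection of $\beta_{X,x}$) $\Rightarrow$ germ of $V$ is $1$ in $\Omega_{1}(\scr{O}_{Y,y})$ (this is where locality of the stalk morphism is used) $\Rightarrow$ $y\in\beta_{Y}(U)(V)$ (top-preservation of $\beta_{Y,y}$), i.e.\ $x\in f^{-1}(\beta_{Y}(U)(V))$; the reverse inclusion uses only top-preservation and functoriality, no locality. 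This closes fullness with exactly the ingredients your part (i) provides, and no germ-level isomorphism is ever needed.
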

\begin{proof}
Let $X=(X,\scr{O}_{X},\beta_{X})$ be a weak $\scr{C}$-scheme.
First, we will show that $X$ is a local $\scr{C}^{\op}$-space.
Fix a point $x \in X$, and let $Y=\Spec^{0} \scr{O}_{X,x}=\cup_{i} U_{i}$
be a quasi-compact open cover of $Y$.
We have a natural isomorphism 
$Y=\varprojlim_{x \in U} \Omega_{c}(\Spec \scr{O}_{X}(U))$
since $\Spec$ is a right adjoint,
and $\beta_{X}$ induces a morphism 
\[
\iota: X_{x}= \varprojlim_{x \in U}U \to Y.
\]
The left-hand side is a local object in $\cat{DLat}$.
Since $\{U_{i}\}_{i}$ covers $Y$, one of $\iota^{-1}U_{i}$ must coincide with $X_{x}$.
Since $\Omega_{1}(X_{x})$ is obtained by localizations of open neighborhoods
of $x$, $\iota^{-1}U_{i}=V$ for some quasi-compact open neighborhood $V$ of $x$.
This shows that $U_{i}=V$ in $\Omega_{1}(\scr{O}_{X}(V))$,
and hence also in $\Omega_{1}(\scr{O}_{X,x})$.
Therefore, $X$ is a local $\scr{C}^{\op}$-space.

Suppose we have a morphism $f:X \to Y$ of weak $\scr{C}$-schemes.
We need to show that the induced morphism $\scr{O}_{X,x} \to \scr{O}_{Y,y}$
is local, for every $x \in X$ and $y=f(x)$.
We have a commutative diagram
\[
\xymatrix{
\Omega_{1}(\scr{O}_{Y,y}) \ar[r]^{f^{-1}} \ar[d]_{\beta_{Y}} &
 \Omega_{1}(\scr{O}_{X,x}) \ar[d]^{\beta_{X}} \\
\Omega_{c}(Y_{y}) \ar[r] & \Omega_{c}(X_{x})
}
\]
of distributive lattices.
Suppose $U \in \Omega_{1}(\scr{O}_{Y,y})$ maps to $1 \in \Omega_{1}(\scr{O}_{X,x})$.
Then, $\beta_{X}(f^{-1}U)=1$: this implies that $x \in \beta_{X}(f^{-1}U)$,
and hence $y \in \beta_{Y}(U)$.
This shows that $U=1$, which means that $\scr{O}_{X,x} \to \scr{O}_{Y,y}$
is indeed a local morphism.
As a consequence, we have a functor 
$\iota:\cat{w.$\scr{C}$-Sch} \to \cat{$\scr{C}$-LCS}$.

It remains to prove that $\iota$ is fully faithful.
Let $X,Y$ be two weak $\scr{C}$-schemes,
and $f:X \to Y$ a morphism of locally $\scr{C}^{\op}$-spaces.
For any inclusion of two quasi-compact open subsets $V \subset U$ of $Y$,
it suffices to show that $f^{-1}(\beta_{Y}(U)(V))=\beta_{X}(f^{\#}(U)(V))$.
Let $x$ be any point in $\beta_{X}(f^{\#}(U)(V))$,
and set $y=f(x)$. We have a sequence of local morphisms
\[
\Omega_{1}(\scr{O}_{Y,y}) \to \Omega_{1}(\scr{O}_{X,x}) \to \Omega_{c}(X_{x}),
\]
which shows that $V=1$ in $\Omega_{1}(\scr{O}_{Y,y})$.
We have another sequence of local morphisms
\[
\Omega_{1}(\scr{O}_{Y,y}) \to \Omega_{c}(Y_{y}) \to \Omega_{c}(X_{x}),
\]
induced by $f^{-1}\circ \beta_{Y}$, and this sends $1$ to $1$.
This is equivalent to saying that $x \in f^{-1}(\beta_{Y}(U)(V))$.
Therefore, $f^{-1}(\beta_{Y}(U)(V)) \supset \beta_{X}(f^{\#}(U)(V))$.
The converse can be proved similarly.
\end{proof}

Hence, we see that the notion of $\scr{C}$-scheme coincides
with that of conventional schemes, when we set $\scr{C}$
and Grothendieck topology properly.
\begin{Exam}
\begin{enumerate}
\item
Let $V$ be the algebraic type of abelian groups.
Then, the commutative monoid objects in the category
of $V$-algebras are exactly commutative rings.
For a ring $R$, $I(R)$ is the set of \textit{finitely generated} ideals
of $R$.
Note that $\func{comp}\func{rad}(R(I))$ is the set of radical ideals,
and $\func{pt}\func{comp}\func{rad}(R(I))$ is the usual 
Zariski spectrum of $R$.

Let $\scr{C}=\cat{CRing}^{\op}$ be the opposite category
of commutative unital rings, and $\scr{E}$ the lluf subcategory
of localizations of finite type.
Then we see that $\scr{C}$-schemes are exactly 
coherent schemes.

\item
When $V$ is the null algebraic type,
the the commutative monoid objects in the category
of $V$-algebras are exactly commutative monoids.
Then, we obtain the coherent monoid schemes, in the sense
of Deitmar, of equivalently, that of To\"{e}n-Vaqui\'{e} \cite{TV}.
\end{enumerate}
\end{Exam}
\begin{Rmk}
The category of coherent schemes include
all noetherian schemes and affine schemes,
which are practically all the schemes we treat.
Moreover, the morphisms between
noetherian (or affine) schemes are all quasi-compact.
This tells that the category of coherent schemes
is sufficiently large to work on.
Also, Remark \ref{rmk:sch:cat:incomp} implies
that we had better work within this category.
\end{Rmk}

\subsection{Comparison with $\scr{A}$-schemes}

In this section,
we compare weak $\scr{C}$-schemes with $\scr{A}$-schemes
introduced in \cite{Takagi1}.
We recall the definition of $\scr{A}$-schemes,
in the most general type.
\begin{Def}
A quadruple $\scr{A}=(W, \alpha_{1},\alpha_{2},\gamma)$
is a \textit{schematizable algebraic type}, if
\begin{enumerate}[(a)]
\item $W$ is an algebraic type with commutative
multiplicative monoid structure.
We denote by $\cat{$W $-alg}$ the
category of $W$-algebras and their homomorphisms.
\item $\alpha_{1}$ is a functor $\cat{$W $-alg} \to \cat{DLat}$.
\item $\alpha_{2}$ is a natural transformation
$\Id_{\cat{$W $-alg}} \Rightarrow \alpha_{1}$
such that $\alpha_{2,R}:R \to \alpha_{1}R$
is multiplication-preserving.
\item For each $W$-algebra $R$ and a multiplicative system
$S$, $\gamma$ is a natural isomorphism
$\alpha_{1}(S^{-1}R) \to (\alpha_{2,R}(S))^{-1}R$.
\end{enumerate}
\end{Def}

When $\scr{F}$ is a $\cat{$W$-alg}$-valued
sheaf on a coherent space $X$,
then the functor $\alpha_{1}$ induces a $\cat{DLat}$-valued
sheaf $\alpha_{1}\scr{F}$ on $X$, defined by the sheafification
of $U \mapsto \alpha_{1}\scr{F}(U)$.

\begin{Def}
Let $\scr{A}=(W,\alpha_{1},\alpha_{2},\gamma)$
be a schematizable algebraic type.
An \textit{$\scr{A}$-scheme} is a triple $(X,\scr{O}_{X},\beta_{X})$
such that 
\begin{enumerate}
\item $X$ is a coherent space,
$\scr{O}_{X}$ is a $\cat{$W $-alg}$-valued sheaf on $X$,
$\beta_{X}:\alpha_{1}\scr{O}_{X} \to \tau_{X}$
is a morphism of $\cat{DLat}$-valued sheaves on $X$
which we refer to as the ``support morphism", and
\item the restriction maps reflect localizations:
let $V \subset U$ be an inclusion of quasi-compact open subsets of $X$,
and we denote by $\scr{O}_{X}(U)_{V}$ the localization
of $\scr{O}_{X}(U)$ by the multiplicative system
\[
\{f \in \scr{O}_{X}(U) \mid \beta_{X}\alpha_{2}(f) \geq V\}.
\]
\end{enumerate}
A \textit{morphism} $f:X \to Y$ is a pair $(f,f^{\#})$,
where 
\begin{enumerate}[(a)]
\item $f:|X| \to |Y|$ is a morphism of coherent spaces,
\item $f^{\#}:\scr{O}_{Y} \to f_{*}\scr{O}_{X}$
is a morphism of $\cat{$W $-alg}$-valued sheaves
on $Y$, such that the following diagram commutes:
\[
\xymatrix{
\alpha_{1}\scr{O}_{Y} \ar[r]^{\alpha_{1}f^{\#}}
\ar[d]_{\beta_{Y}} &
f_{*}\alpha_{1}\scr{O}_{X} \ar[d]^{f_{*}\beta_{X}} \\
\tau_{Y} \ar[r]_{f^{-1}} & f_{*}\tau_{X}
}
\]
\end{enumerate}
\end{Def}

Here, we list up the properties of $\scr{A}$-schemes:
let $\cat{$\scr{A}$-Sch}$ be the category
of $\scr{A}$-schemes and their morphisms.
Then:
\begin{Thm}[\cite{Takagi2}]
\begin{enumerate}
\item $\cat{$\scr{A}$-Sch}$
is a full subcategory of locally $\cat{$W $-alg}$-spaces.
\item The category of $\scr{A}$-schemes
is small complete and small co-complete.
\item We can consider the image $\scr{A}$-scheme
for any morphism of $\scr{A}$-schemes.
\end{enumerate}
\end{Thm}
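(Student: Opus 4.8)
The plan is to dispatch the three assertions in turn, reusing the machinery already built for weak $\scr{C}$-schemes. For (1), I would mimic the proof of Theorem \ref{thm:sch:local:space} almost verbatim. Given an $\scr{A}$-scheme $(X,\scr{O}_{X},\beta_{X})$, fix $x \in X$ and a quasi-compact open cover of $\Spec^{0}\scr{O}_{X,x}$. The support morphism $\beta_{X}$, passed to the stalk, yields a morphism from the local lattice $X_{x}=\varprojlim_{x \in U}U$ into $\Omega_{1}(\scr{O}_{X,x})$, and since $X_{x}$ is local in $\cat{DLat}$ one member of the cover must coincide with the whole space; hence $\scr{O}_{X,x}$ is local. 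Faithfulness is clear, and fullness reduces, exactly as before, to checking that a local morphism of stalks forces the commuting $\beta$-square, with the localization-reflecting axiom playing the role that the factorization condition played for weak $\scr{C}$-schemes. I expect this step to be routine.

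For (2), cocompleteness is the easier half: coproducts are disjoint unions of coherent spaces (which exist by Stone duality) carrying the evident sheaves and support morphisms, and coequalizers are built by gluing, reflecting the cocompleteness of $\cat{$W$-alg}$ on sections and of $\cat{Coh}$ on the base. Completeness is the crux, and here it is essential that $\scr{A}$-schemes are \emph{not} required to be locally affine (cf. Remark \ref{rmk:sch:cat:incomp}), so that limits need not collapse as they do for ordinary schemes. Given a diagram $(X_{i})$, I would form $|X|=\varprojlim|X_{i}|$ in $\cat{Coh}$, pull the structure sheaves back along the projections $p_{i}\colon X \to X_{i}$, and assemble $\scr{O}_{X}$ sectionwise from the $p_{i}^{-1}\scr{O}_{X_{i}}$ via the coproduct in $\cat{$W$-alg}$ — that is, the tensor product supplied by the self-enhancing monoidal structure $\otimes$ — so that on affine pieces this recovers $\Spec(A \otimes_{R} B)$, matching the pullback construction of $\scr{C}$-schemes. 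The support morphism on the limit is then induced from the $\beta_{X_{i}}$.

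The main obstacle lies precisely here: verifying that this candidate limit genuinely satisfies the $\scr{A}$-scheme axioms — in particular that the localization-reflecting condition survives the sectionwise tensor construction — and that it is the limit \emph{inside} $\cat{$\scr{A}$-Sch}$ rather than merely in the ambient category of locally $\cat{$W$-alg}$-spaces, where (as with schemes) products disagree. I would settle this by testing against affine $\scr{A}$-schemes $\Spec R$ and invoking the adjunction $\Gamma \dashv \Spec$ together with the closed monoidal structure on $\cat{$W$-alg}$.

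For (3), I would produce an epi-mono factorization $f=(X \to Z \hookrightarrow Y)$. The comorphism $f^{\#}\colon \scr{O}_{Y} \to f_{*}\scr{O}_{X}$ factors sectionwise in $\cat{$W$-alg}$ — which, being a finitary algebraic category, admits image factorizations — as $\scr{O}_{Y} \twoheadrightarrow \scr{I} \hookrightarrow f_{*}\scr{O}_{X}$; sheafifying $\scr{I}$ over the image of $|f|$ furnishes the structure sheaf of $Z$, with support morphism restricted from $\beta_{Y}$. It then remains to confirm that $X \to Z$ is the relevant epimorphism and $Z \hookrightarrow Y$ a monomorphism in $\cat{$\scr{A}$-Sch}$, and that the localization-reflecting axiom holds for $Z$ — the same verification as in (2), now applied to a single quotient sheaf.
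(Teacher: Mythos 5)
First, a caveat: the paper contains \emph{no} proof of this theorem; it is imported verbatim from \cite{Takagi2}, so there is no in-paper argument to measure your proposal against, and what follows judges it on its own terms. Your part (1) is sound: by the comparison in \S 4.3, $\cat{$\scr{A}$-Sch}$ is equivalent to $\cat{w.$\scr{C}$-Sch}$, so the full embedding into locally $\cat{$W$-alg}$-spaces is exactly Theorem \ref{thm:sch:local:space} transported along that equivalence, which is what your stalkwise argument amounts to.

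The genuine gap is in (2), and it is fatal to the construction as stated. You appeal to the adjunction $\Gamma \dashv \Spec$, but that adjunction \emph{refutes} your construction. Since $\Spec:\scr{C} \to \cat{w.$\scr{C}$-Sch}$ is a right adjoint, it preserves every limit that exists in $\scr{C}=\cat{$W$-alg}^{\op}$; pullbacks in $\scr{C}$ are pushouts of $W$-algebras, so the fibre product of affine $\scr{A}$-schemes exists automatically (no completeness needed) and is forced to be $\Spec A \times_{\Spec R} \Spec B \simeq \Spec(A \otimes_{R} B)$. But the underlying coherent space of $\Spec(A\otimes_{R}B)$ is \emph{not} the fibre product $|\Spec A| \times_{|\Spec R|} |\Spec B|$ taken in $\cat{Coh}$: points of the $\cat{Coh}$-product are pairs of primes (points of a coproduct of lattices are pairs of points), whereas for $A=k[x]$, $B=k[y]$, $R=k$ the distinct primes $(0)$, $(x-y)$, $(xy-1)$ of $k[x,y]$ all induce the same pair of generic points, so the canonical comparison map $\Spec(A\otimes_{R}B) \to |\Spec A|\times|\Spec B|$ is not injective, hence not an isomorphism. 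Consequently an object whose underlying space is $\varprojlim_{i}|X_{i}|$ computed in $\cat{Coh}$, however it is sheafed, cannot satisfy the universal property of the limit in $\cat{$\scr{A}$-Sch}$: if it did, the non-injective comparison map above would have to be an isomorphism. Your claim that the construction ``recovers $\Spec(A\otimes_{R}B)$ on affine pieces'' is precisely where it breaks; this is the same failure as the classical fact that the underlying space of a fibre product of schemes is not the fibre product of the underlying spaces. A correct proof must manufacture the underlying coherent space out of the algebraic data (for instance by gluing the lattices $\Omega_{1}$ of the sectionwise tensor products), not take the limit of spaces first and decorate it.

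The same defect recurs in (3): ``sheafifying $\scr{I}$ over the image of $|f|$'' presupposes that the set-theoretic image of $|f|$ is the right coherent space to carry the image $\scr{A}$-scheme, but in general it is not a coherent subspace at all (already for schemes the image is merely constructible, and the scheme-theoretic image lives on the closed set read off from the kernel of $f^{\#}$). Here, too, the underlying space has to be extracted from the kernel/image of $f^{\#}$ --- a lattice quotient --- rather than from the point-set image, and your cocompleteness sketch (``coequalizers are built by gluing'') names the answer without confronting the same question of where the underlying space comes from.
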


Practically, we only consider the following case:
\begin{Exam}
\label{exam:Asch:alg:type}
$V$ is a self enhancing algebraic type,
and $W$ is the algebraic type of commutative monoid
objects in $V$-algebras.
For each $W$-algebra $R$,
$\alpha_{1}(R)$ is the set of finitely generated ideals of $R$,
divided by the congruence defined by
\[
\mathfrak{a} \equiv \mathfrak{b}
\Leftrightarrow \mathfrak{a}^{n} \leq \mathfrak{b}, \ 
\mathfrak{b}^{n} \leq \mathfrak{a} \quad (n \gg 0)
\]
(cf. Lemma \ref{lem:equiv:dlat}).
$\alpha_{2}:R \to \alpha_{1}(R)$ sends $a \in R$
to the principal ideal generated by $a$.
This preserves multiplication.
For each multiplicative system $S$ of $R$,
we have a natural isomorphism $\gamma:
\alpha_{1}(S^{-1}R) \simeq \alpha_{2}(S)^{-1}\alpha_{1}(R)$.
\end{Exam}
The set $\alpha_{1}(R)$ can be regarded
as the distributive lattive corresponding to $\Spec R$,
and we identify a finitely generated ideal
$\mathfrak{a} \in \alpha_{1}(R)$ with the quasi-compact
open subset which is the complement of the support of $\mathfrak{a}$.
Note that $\alpha_{1}\scr{O}_{X}$ corresponds
to $\sigma_{X}$ appeared in the definition of weak
$\scr{C}$-schemes (cf. Definition \ref{def:weak:sch}).

Then, it is straightforward to see that
\begin{Thm}
Let $\scr{A}=(W,\alpha_{1},\alpha_{2},\gamma)$
be as above.
Then, the category of $\scr{A}$-schemes is equivalent
to weak $\scr{C}$-schemes defined in section \S 5.
\end{Thm}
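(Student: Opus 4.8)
The plan is to deduce the equivalence from a single natural isomorphism $\alpha_{1}\cong\Omega_{1}$ of functors $\scr{C}^{\op}\to\cat{DLat}$, and then to check that, once transported along this isomorphism, the remaining data and axioms of an $\scr{A}$-scheme and of a weak $\scr{C}$-scheme agree term by term. Throughout I would write $\scr{C}=(\cat{$W$-alg}^{\op},\scr{E},\mathcal{O})$ for the coherent site carrying the Zariski topology of Corollary \ref{cor:zar:top}, so that $\scr{C}^{\op}=\cat{$W$-alg}$ and the structure sheaf of a weak $\scr{C}$-scheme is a $\cat{$W$-alg}$-valued sheaf, exactly as for an $\scr{A}$-scheme. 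The comparison therefore has no choice to make at the level of underlying spaces and sheaves; all the content is in matching the support morphisms and the local axioms.

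The first and central step is the identification $\alpha_{1}\cong\Omega_{1}$. For a $W$-algebra $R$, Example \ref{exam:Asch:alg:type} describes $\alpha_{1}(R)$ as the set $I(R)$ of finitely generated ideals modulo the congruence of Lemma \ref{lem:equiv:dlat}; by that lemma this quotient is precisely $\func{rad}(I(R))$. On the other hand, the proof of Corollary \ref{cor:zar:top} produces a lattice isomorphism $\Omega_{1}(R)\cong\func{rad}(I(R))$ and records that it is functorial in $R$. Composing, I obtain a natural isomorphism $\theta_{R}:\alpha_{1}(R)\stackrel{\sim}{\to}\Omega_{1}(R)$. Since sheafification preserves isomorphisms of the underlying presheaves, $\theta$ induces an isomorphism $\alpha_{1}\scr{O}_{X}\cong\sigma_{X}$ of $\cat{DLat}^{\op}$-valued sheaves for every $\cat{$W$-alg}$-valued sheaf $\scr{O}_{X}$, as already anticipated in the remark preceding the theorem.

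With $\theta$ in hand, an $\scr{A}$-scheme $(X,\scr{O}_{X},\beta_{X})$ with support morphism $\beta_{X}:\alpha_{1}\scr{O}_{X}\to\tau_{X}$ is sent to the triple $(X,\scr{O}_{X},\beta_{X}\circ\theta^{-1})$, whose support morphism $\sigma_{X}\to\tau_{X}$ has the shape required by Definition \ref{def:weak:sch}, and conversely. What must be verified is that the two compatibility axioms correspond. Here I would use the dictionary furnished by the Zariski topology: an $\scr{E}$-morphism $A\to\scr{O}_{X}(U)$ in $\scr{C}$ is precisely a finite-type localization $\scr{O}_{X}(U)\to A$ in $\cat{$W$-alg}$, necessarily of the form $\scr{O}_{X}(U)\to\scr{O}_{X}(U)_{f}$, and under $\theta$ the open $\beta_{X}(U)(\{\scr{O}_{X}(U)_{f}\})$ is identified with $\beta_{X}\alpha_{2}(f)$. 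Consequently the family of localizations through which the restriction $\scr{O}_{X}(U)\to\scr{O}_{X}(V)$ is required to factor (the weak $\scr{C}$-scheme axiom) is exactly the multiplicative system $\{f\mid\beta_{X}\alpha_{2}(f)\geq V\}$ defining $\scr{O}_{X}(U)_{V}$, so the two axioms express the same universal property of $\scr{O}_{X}(V)$. The morphism squares of the two definitions become literally the same diagram once $\alpha_{1}\scr{O}$ is replaced by $\sigma$ and $\alpha_{1}f^{\#}$ by $\Omega_{1}f^{\#}$; naturality of $\theta$ makes these correspond, so the assignment is bijective on hom-sets and respects composition, yielding the asserted equivalence.

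The hard part will be this reconciliation of the two local axioms. The weak $\scr{C}$-scheme axiom is phrased as a factorization through each qualifying $\scr{E}$-morphism, whereas the $\scr{A}$-scheme axiom demands that $\scr{O}_{X}(V)$ literally be the localization $\scr{O}_{X}(U)_{V}$; showing that these carve out the same objects requires (i) that the single-element localizations $\scr{O}_{X}(U)_{f}$ are cofinal among the relevant $\scr{E}$-morphisms, so that a simultaneous factorization is a factorization through the filtered colimit $\scr{O}_{X}(U)_{V}$, (ii) that $\theta$ matches the support of such a localization with $\beta_{X}\alpha_{2}$, which is exactly the content of the compatibility isomorphism $\gamma$ of the schematizable type, and (iii) that the restriction itself realizes this colimit, so that the comparison map $\scr{O}_{X}(U)_{V}\to\scr{O}_{X}(V)$ is an isomorphism and not merely a factorization. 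Point (iii) is where the flatness of $\scr{E}$-morphisms, the descent datum defining $\scr{O}_{X}$, and the finite-continuity of the structure sheaf genuinely enter.
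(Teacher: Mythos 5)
Your first three paragraphs are essentially the paper's own argument (which the paper compresses to ``it is straightforward to see''), carried out correctly and in more detail: identify $\alpha_{1}(R)=I(R)/{\equiv}$ with $\func{rad}(I(R))$ via Lemma \ref{lem:equiv:dlat}, compose with the functorial lattice isomorphism $\Omega_{1}(R)\cong\func{rad}(I(R))$ from the proof of Corollary \ref{cor:zar:top} to get $\theta:\alpha_{1}\cong\Omega_{1}$, transport the support morphism along the induced isomorphism $\alpha_{1}\scr{O}_{X}\cong\sigma_{X}$, and observe that under this dictionary the two local axioms and the two morphism squares coincide. In particular your observation that factoring through $\scr{O}_{X}(U)_{V}=S^{-1}\scr{O}_{X}(U)$ is, by the universal property of localization, the same as factoring through every $\scr{O}_{X}(U)_{f}$ with $f\in S$ (together with the facts that every finite-type localization is an $\scr{O}_{X}(U)_{f}$, and that $\theta$ carries the class of $\{\scr{O}_{X}(U)_{f}\to\scr{O}_{X}(U)\}$ in $\Omega_{1}$ to the radical of the principal ideal $\alpha_{2}(f)$) is already the whole reconciliation of the axioms; that last identification is by the construction of $\theta$ in Corollary \ref{cor:zar:top}, not really by $\gamma$.

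Your final paragraph, however, sets up a proof obligation that is both unnecessary and impossible. The $\scr{A}$-scheme axiom does not say that $\scr{O}_{X}(V)$ \emph{is} the localization $\scr{O}_{X}(U)_{V}$; the paper's Definition is truncated mid-sentence, but the intended condition (the one in \cite{Takagi2}, and the only reading under which the theorem can be true, since weak $\scr{C}$-schemes impose nothing beyond factorization) is that the restriction $\scr{O}_{X}(U)\to\scr{O}_{X}(V)$ \emph{factors through} $\scr{O}_{X}(U)_{V}$ --- precisely the condition you matched in your third paragraph. Your item (iii), that the comparison map $\scr{O}_{X}(U)_{V}\to\scr{O}_{X}(V)$ be an isomorphism, fails even for honest coherent schemes: take $U=\Spec R$ with $R=\Gamma(E\setminus\{o\},\scr{O}_{E})$ an affine elliptic curve and $V=U\setminus\{p\}$ with $[p]-[o]$ non-torsion in $\Pic^{0}(E)$; then no non-unit of $R$ vanishes only at $p$, so the multiplicative system defining $\scr{O}_{X}(U)_{V}$ consists of units and $\scr{O}_{X}(U)_{V}=R$, whereas $\scr{O}_{X}(V)\supsetneq R$ by Riemann--Roch. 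So no appeal to flatness, descent data, or finite continuity can deliver (iii), and nothing in either definition requires it; had you pursued that step, the proof would have collapsed. Drop the last paragraph, keep the factorization reading, and your argument is complete and agrees with the paper's.
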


\begin{Exam}
\label{exam:Asch:Nagata}
\begin{enumerate}
\item Suppose $\scr{A}$ is the schematizable
algebraic type induced from rings:
namely, $V$ is the algebraic type of abelian groups
in Example \ref{exam:Asch:alg:type}.
Then for each ring $R$,
$\alpha_{1}(R)$ is the idempotent
semiring of finitely generated ideals of $R$
modulo the congruence generated by $\mathfrak{a}^{2}=\mathfrak{a}$.
For each element $\mathfrak{a} \in \alpha_{1}(R)$,
$\beta_{X}$ simply gives the complement of
the support of $\mathfrak{a}$ which is a quasi-compact 
open subset of $\Spec R$.
This is why we call $\beta_{X}$ `the support morphism'.

Actually, the data $\beta_{X}$ implies that 
$(X,\scr{O}_{X})$ is a locally ringed space
by Theorem \ref{thm:sch:local:space}.

\item Let $\scr{A}$ be as in (1).
Let $U$ be a non-empty open subset
of a coherent scheme $X$.
The Zariski-Riemann space $\ZR(U,X)$
is the limit of $U$-admissible blowups,
hence an object of $\scr{A}$-schemes,
but in general not a scheme.
However, this object happens to be useful
to prove pure algebro-geometric theorems,
such as Nagata embeddings (\cite{Takagi2}, \cite{Takagi3}).
\item There is a notion of Zariski-Riemann
spaces for monoids.
This is used to prove the existence of 
equivariant compactification of toric varieties
in \cite{EI}.
\end{enumerate}
\end{Exam}

\section{The property of the Zariski topology}

Let $V$ be a self enhancing algebraic type,
$W$ the type of commutative monoid objects in $V$-algebras,
and $R$ a $W$-algebra.
So far, we have introduced the Zariski topology 
on the category of $W$-algebras, which is defined
by the ideals of $R$, without the reason why.
In fact, there is no natural reason for this choice,
if we only aim to construct a natural $\cat{$W$-alg}$-valued
space. In addition, 
we have at least two natural topologies,
different from the Zariski topology.

Therefore, we need to characterize
Zariski topology by a universal property,
if we want to claim it as a natural one.
This is accomplished only by looking at modules over
the spectrum.

In the sequel,
$V$ is a self enhancing algebraic type, and
$W$ is the type of commutative monoid objects in $V$-algebras.

\subsection{Modules}

The importance of looking at
$R$-modules instead of $R$ itself is already
well known, and typically it appears in the Morita theory:
\begin{Thm}[Morita]
Let $V$ be a self enhancing algebraic type,
and $R$ be a monoid object in $\cat{$V$-alg}$.
Then, we have a natural isomorphism
\[
Z(R) \simeq \End(\Id_{\cat{mod-$R$}})
\]
of monoid objects in $\cat{$V$-alg}$,
where $Z(R)$ is the center of $R$,
and $\cat{mod-$R$}$ is the category
of right $R$-modules.
In particular, $R$ can be recovered from the category of 
$R$-modules when $R$ is commutative.
\end{Thm}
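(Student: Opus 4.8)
The plan is to exhibit mutually inverse homomorphisms of monoid objects between $Z(R)$ and $\End(\Id_{\cat{mod-$R$}})$, the real work being to check compatibility not merely with composition but with the ambient $V$-algebra structure. First I would pin down both sides as monoid objects in $\cat{$V$-alg}$. For the center, recall that since $R$ is a monoid object its multiplication distributes over every operation of $V$ (the set-theoretic description given just after the definition of monoid objects). Hence if $z_{1},\dots,z_{n}$ are central then for any $n$-ary $\phi$ and any $r\in R$ one has $r\cdot\phi(z_{1},\dots,z_{n})=\phi(rz_{1},\dots,rz_{n})=\phi(z_{1}r,\dots,z_{n}r)=\phi(z_{1},\dots,z_{n})\cdot r$, so $Z(R)$ is a sub-$V$-algebra of $R$, closed under multiplication and containing the unit, i.e. a commutative monoid object. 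For the other side, each hom-object $\cat{mod-$R$}(M,N)$ is a $V$-algebra: it sits inside $\cat{$V$-alg}(M,N)$, which carries the pointwise $V$-structure from self enhancement, and it is a sub-$V$-algebra there precisely because the right $R$-action is bilinear, so $\phi(f_{1},\dots,f_{n})$ is again $R$-linear. Taking $\Nat(\Id,\Id)\subseteq\prod_{M}\End_{R}(M)$ and using that the naturality squares are preserved by the pointwise operations (again bilinearity of composition, i.e. that $f_{*}$ and $f^{*}$ are homomorphisms), I obtain that $\End(\Id_{\cat{mod-$R$}})$ is a sub-$V$-algebra of the product, and a monoid object under vertical composition.

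Next I would define the comparison map $\Psi:\End(\Id_{\cat{mod-$R$}})\to R$ by $\eta\mapsto\eta_{R}(1)$, viewing $R$ as a right module over itself. The classical two lines then both locate the image in the center and determine $\eta$ completely: right $R$-linearity of $\eta_{R}$ gives $\eta_{R}(r)=\eta_{R}(1)\cdot r$, while naturality against left multiplication $\lambda_{r}:R\to R$ (a morphism of right modules) gives $\eta_{R}(r)=r\cdot\eta_{R}(1)$, so $z:=\eta_{R}(1)$ is central; and naturality against the right-linear maps $f_{m}:R\to M$, $r\mapsto m\cdot r$, gives $\eta_{M}(m)=m\cdot z$ for every $M$. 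This simultaneously shows $\Psi$ lands in $Z(R)$ and that the assignment $z\mapsto\eta^{z}$, $\eta^{z}_{M}(m)=m\cdot z$, is a two-sided inverse; one checks directly that $\eta^{z}$ is right $R$-linear and natural, using centrality of $z$. Compatibility with the multiplicative structure is then immediate: $\Psi(\eta\circ\xi)=\eta_{R}(\xi_{R}(1))=\Psi(\eta)\Psi(\xi)$ and $\Psi(\Id)=1$.

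The step I expect to carry the weight is verifying that $\Psi$ respects the $V$-algebra structure, since everything above is the usual Morita bookkeeping. Here I would use that the $V$-structure on $\End(\Id)$ is computed componentwise and that evaluation at a point is a $V$-algebra homomorphism under the pointwise operations of self enhancement: for an $n$-ary $\phi$,
\[
\Psi(\phi(\eta^{(1)},\dots,\eta^{(n)}))=\phi(\eta^{(1)},\dots,\eta^{(n)})_{R}(1)=\phi(\eta^{(1)}_{R},\dots,\eta^{(n)}_{R})(1)=\phi(\eta^{(1)}_{R}(1),\dots,\eta^{(n)}_{R}(1)),
\]
the last equality being exactly the defining property $\phi(g_{1},\dots,g_{n})(x)=\phi(g_{1}(x),\dots,g_{n}(x))$ of the self-enhancing structure on the hom-object $\End_{R}(R)$. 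This identifies $\Psi$ with the pointwise operations on $Z(R)$, so $\Psi$ is a $V$-algebra map, and being bijective it is an isomorphism of monoid objects in $\cat{$V$-alg}$. For the word \emph{natural} I would finally record that the construction is compatible with the evident structure as $R$ varies among monoid objects; the only delicate point is that a homomorphism $R\to S$ transports module categories by restriction of scalars, so naturality must be phrased against that functor rather than a literal map of module categories. The commutative-$R$ statement is then just the observation that $Z(R)=R$, so that $R$ is recovered as $\End(\Id_{\cat{mod-$R$}})$.
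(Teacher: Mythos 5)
Your proof is correct, and its core is the same as the paper's: evaluate $\eta$ at the module $R$, use right $R$-linearity to see that $\eta_{R}$ is left multiplication by $z=\eta_{R}(1)$, and use naturality against the left multiplications $\lambda_{r}$ to conclude $z\in Z(R)$. You diverge at two points, both to your advantage. First, to show $\eta_{M}$ is multiplication by $z$ on an arbitrary module $M$, the paper extends from $R$ to free modules (via the universal property of coproducts) and then to arbitrary $M$ via a surjection $P \to M$ from a free module; you instead apply naturality directly to the orbit maps $f_{m}:R \to M$, $r \mapsto m\cdot r$, which is more economical and avoids invoking free presentations in $\cat{mod-$R$}$ altogether. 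Second, the paper's proof stops at the set-theoretic bijection, leaving entirely implicit that both sides are monoid objects in $\cat{$V$-alg}$ and that the bijection respects the $V$-operations; you verify this --- $Z(R)$ is a sub-$V$-algebra of $R$ by distributivity plus centrality, $\End(\Id_{\cat{mod-$R$}})$ carries the pointwise self-enhanced structure because composition is bilinear ($f_{*}$ and $f^{*}$ are homomorphisms), and evaluation at $1 \in R$ is a $V$-algebra map --- which is genuinely needed for the statement as written (an isomorphism of monoid objects in $\cat{$V$-alg}$, not merely an isomorphism of monoids). Your closing caveat, that naturality in $R$ must be phrased against restriction of scalars, addresses a point the paper's proof does not touch at all. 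In short: same skeleton, a cleaner determination step, and a more complete verification of the structure claimed in the statement.
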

\begin{proof}
In the sequel, any $R$-module is a right $R$-module.

Let $F:\Id \to \Id$ be a natural endomorphism
of $\Id=\Id_{\cat{mod-$R$}}$.
Then $F_{R}:R \to R$ is given by
a left multiplication by an element $z \in R$,
since $R$ is a free $R$-module generated by one element.
Since $F$ is natural, $F_{R}$
must commute with any left multiplication
$a \times (-):R \to R$ by an element $a \in R$.
This shows that $z$ is in the center of $R$.
The universal property of coproducts tells
that $F_{P}:P \to P$ is also a left multiplication by $z$,
for any free $R$-module $P$.

Let $M$ be an arbitrary $R$-module.
Then we have a surjective homomorphism $P \to M$
from a free $P$-module.
The commutative diagram
\[
\xymatrix{
P \ar[r] \ar[d]_{z \times (-)} & M \ar[d]^{F_{M}} \\
P \ar[r] & M
}
\]
tells that $F_{M}$ is also a left multiplication by $z$.
\end{proof}

Suppose there is a surjective homomorphism
$R \to M$ of $R$-modules when $R$ is a ring.
Then this map is essentially determined by the kernel,
which is an ideal of $R$.
In contrast, the surjective map cannot be
parametrized by the ideal of $R$,
if we consider algebraic types other than that of rings;
we need congruences (see Definition \ref{def:congruence}).

Therefore, at first sight we may think that
we should replace ideals by congruences.
However, we run into trouble when
we do this for general cases.
\begin{Exam}
The typical example if when $V$ is the
null algebraic type, namely when an $R$-module
is a set with an action of a monoid $R$.
Then, the set of 
congruences of $R$ may not have
a maximal element,
since the set of all congruences is not finitely generated.
\end{Exam}
This means that, if we try to construct a
complete distributive lattice from the congruence directly,
then the space may not have enough points. 

One possible solution is to pass through
other algebraic objects, such as rings,
so that we can obtain a distributive lattice
from congruences.
This idea is realized in \cite{Deitmar2}.

However, it happens that, looking at ideals is
sufficient to detect modules under a moderate assumption.

\begin{Prop}
\label{prop:congruence:max}
Suppose $V$ has a constant operator $0$
(then for any $W$-algebra $R$,
$0 \in R$ becomes the absorbing element
with respect to the multiplication).
 
Let $R$ be any non-zero $W$-algebra,
and $\mathfrak{a}$ be a congruence of $R$ as an $R$-module.
\begin{enumerate}
\item $\mathfrak{a}=R\times R$ if and only if $(1,0) \in \mathfrak{a}$.
\item Any non-unital congruence $\mathfrak{a}$
is a subcongruence of a maximal congruence.
\end{enumerate}
\end{Prop}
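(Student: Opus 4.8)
The plan is to prove (1) first, since it supplies the finitary criterion for unitality that makes the Zorn-type argument in (2) go through. For (1) the forward implication is immediate: if $\mathfrak{a} = R \times R$ then every pair, in particular $(1,0)$, lies in $\mathfrak{a}$. For the converse I would use that $\mathfrak{a}$ is a congruence of $R$ viewed as an $R$-module, so the equivalence relation $\equiv$ it defines is compatible with the $R$-action, i.e. $a \equiv b$ implies $ra \equiv rb$ for every $r \in R$. Assuming $1 \equiv 0$ and applying this with $a=1$, $b=0$ gives $r = r\cdot 1 \equiv r \cdot 0 = 0$, where the last equality is absorption of the element $0$ furnished by the constant operator. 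Hence every element of $R$ is congruent to $0$, so by symmetry and transitivity any two elements are congruent and $\mathfrak{a} = R \times R$.

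For (2) the strategy is a direct application of Zorn's lemma to the poset $P$ of non-unital congruences containing $\mathfrak{a}$, ordered by inclusion; $P$ is non-empty since $\mathfrak{a} \in P$. Given a chain $C \subseteq P$, I claim its union $\mathfrak{b} = \bigcup_{\mathfrak{c} \in C} \mathfrak{c}$ is again a non-unital congruence containing $\mathfrak{a}$, hence an upper bound for $C$ in $P$. Here I would first note that a congruence in the sense of Definition \ref{def:congruence} is the same as an equivalence relation compatible with the $V$-operations and the $R$-action (recover the inducing morphism as the quotient map). Checking that $\mathfrak{b}$ is such a relation is then routine and rests on the single observation that any finite list of pairs drawn from the union already lies in a common member of the chain: this settles transitivity, compatibility with each finitary operation, and compatibility with the action at once.

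The decisive point, and the only place where the hypotheses are genuinely used, is that $\mathfrak{b}$ remains non-unital. This is exactly what (1) buys us: non-unitality of a congruence $\mathfrak{c}$ is equivalent to the single negative condition $(1,0) \notin \mathfrak{c}$, and such a condition is manifestly preserved under directed unions, since $(1,0) \in \mathfrak{b}$ would force $(1,0) \in \mathfrak{c}$ for some $\mathfrak{c} \in C$, contradicting $\mathfrak{c} \in P$. Thus $\mathfrak{b} \in P$, Zorn's lemma yields a maximal element $\mathfrak{m}$ of $P$, and $\mathfrak{m}$ is the sought maximal congruence containing $\mathfrak{a}$.

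I expect the main obstacle to be conceptual rather than computational: one must recognise that the constant operator $0$ is precisely what converts ``unital'' into a one-element membership test, paralleling the role played by $1 \in \mathfrak{a}$ for ideals. Without it, as the preceding example with the null type shows, the union of a chain of proper congruences can collapse to the full relation and no maximal congruence need exist. Once (1) pins unitality to the pair $(1,0)$, the classical ideal-theoretic Zorn argument transfers essentially verbatim, and the $R$-module structure of the congruence is exactly what lets (1) reduce the whole family of pairs to this single one.
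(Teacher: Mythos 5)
Your proposal is correct and follows essentially the same route as the paper: part (1) is the absorption computation $r = r\cdot 1 \equiv r\cdot 0 = 0$ (the paper phrases it in the quotient module $M = R/\mathfrak{a}$ as $x = 1\cdot x = 0\cdot x = 0$), and part (2) is Zorn's lemma applied to the poset of non-unital congruences containing $\mathfrak{a}$, with chains having upper bounds precisely because unitality is detected by the single pair $(1,0)$ — exactly the paper's remark that the unit congruence is principally generated by $(1,0)$. Your write-up simply makes explicit the routine verifications (union of a chain is a congruence, kernel-pair versus compatible-relation description) that the paper leaves implicit.
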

\begin{proof}
\begin{enumerate}
\item Let $M=R/\mathfrak{a}$ be the quotient $R$-module.
Then for any $x \in M$,
\[
x=1 \cdot x=0 \cdot x=0.
\]
\item This follows from the fact that
the set $\mathcal{S}$ of non-unital congruences of $R$
including $\mathfrak{a}$ is an inductively ordered set,
since the unit congruence is principally generated by $(1,0)$.
\end{enumerate}
\end{proof}
Actually, if the algebraic type does not
contain a constant operator, then
we can add it, so that the situation is reduced to
the above case.

\subsection{Quasi-coherent modules}

In the sequel, $\scr{C}$ is the opposite category of $W$-algebras,
$\scr{E}$ is the subcategory of $\scr{C}$
consisting of localizations of finite type;
this restriction on $\scr{E}$ is reasonable, since
\begin{enumerate}
\item it is the simplest and the most important
case that occurs, and
\item
in general, to describe
flat epimorphisms (the condition which
we require for $\scr{E}$) of $W$-algebras
is not at all trivial,
even for the classical case of rings;
see \cite{Lazard}, \cite{BBR}.
\end{enumerate}

\begin{Def}
We fix a coherent Grothendieck topology $\mathcal{O}$
on $\scr{C}$.
We have the corresponding functor 
$\Spec^{\mathcal{O}}:\scr{C} \to \cat{$\scr{C}$-Sch}$.
\begin{enumerate}
\item Let $R$ be a $W$-algebra and $X=\Spec^{\mathcal{O}}R$.
Let $M$ be an $R$-module.
The $\scr{O}_{X}$-module $\Shf^{\mathcal{O}}(M)$
is defined by the sheafification of $A \mapsto A \otimes_{R}M$,
where $A \to R$ is a morphism in $\scr{E}$.
\item Let $X$ be a $\scr{C}$-scheme.
An $\scr{O}_{X}$-module is \textit{quasi-coherent},
if it is isomorphic to $\Shf^{\mathcal{O}}(M)$
on each affine open subset $\Spec^{\mathcal{O}}R$ of $X$,
where $M$ is an $R$-module.
\end{enumerate}
\end{Def}

When $\mathcal{O}$ is the Zariski topology,
we denote by $\Spec^{\Zar}R$ (resp. $\Shf^{\Zar}(M)$)
for $\Spec^{\mathcal{O}}R$ (resp. $\Shf^{\mathcal{O}}(M)$).

When there is an inclusion
$\mathcal{O}_{1} \to \mathcal{O}_{2}$
between two topologies,
this induces an immersion 
$\Spec^{\mathcal{O}_{2}}R \to \Spec^{\mathcal{O}_{1}}R$
on the underlying space.

\begin{Def}
\begin{enumerate}
\item For any $W$-algebra $R$,
let $\mathcal{O}^{\Tot}_{R}$ be the set
of all finite sets $\{R[f_{i}^{-1}] \to R\}_{i}$
of localizations which satisfies the descent datum.
The family $\mathcal{O}^{\Tot}=\{\mathcal{O}^{\Tot}_{R}\}_{R}$
gives a Grothendieck topology.
\item For any $W$-algebra $R$,
let $\mathcal{O}^{\min}_{R}$ be the set
of all finite sets $\{R[f_{i}^{-1}] \to R\}_{i}$
of localizations such that either one of the $f_{i}$'s is a unit.
The family $\mathcal{O}^{\min}=\{\mathcal{O}^{\min}_{R}\}_{R}$
gives a Grothendieck topology.
\end{enumerate}
\end{Def}
The topology $\mathcal{O}^{\Tot}$ (resp. $\mathcal{O}^{\min}$)
gives the coarsest (resp. finest) topology:
when we denote by $\Spec^{\Tot}$ (resp. $\Spec^{\min}$)
the corresponding spectrum functors, then
we have immersions
\[
\Spec^{\Tot}R \to \Spec^{\Zar}R
\to \Spec^{\min}R.
\]
These immersions are not isomorphisms in general.
For example, when $R$ is a noetherian commutative ring,
then the points of $\Spec^{\Tot}R$ correspond
to the points of $\Spec R$ of height less than $2$
via Krull's Hauptidealsatz (\cite{Matsumura}, Theorem 13.5).

Therefore,
we need to consider modules to characterize
the Zariski topology.
\begin{Thm}
\label{thm:ideal:top:mod}
Suppose $V$ has a constant operator $0$.
Then, the Zariski topology is the coarsest topology $\mathcal{O}$
which satisfies the following condition:

(*) For any $W$-algebra $R$ and any $R$-module $M$,
$\Shf^{\mathcal{O}}(M)=0$ if and only if $M=0$.
\end{Thm}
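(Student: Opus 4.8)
The plan is to verify the two halves of this extremal characterization separately: that the Zariski topology satisfies condition (*), and that any topology satisfying (*) is finer than (i.e.\ contained in) the Zariski one, so that Zariski is indeed the coarsest such. Throughout I would pass to stalks. Since $\Shf^{\mathcal{O}}(M)$ is the sheafification of $[A \to R] \mapsto A \otimes_{R} M$ on $\Spec^{\mathcal{O}} R$, its stalk at a point $x$ is the localization $M_{x} = \varinjlim_{x \in D(g)} M_{g}$, and a sheaf on a coherent (hence sober) space vanishes iff all of its stalks vanish. Thus (*) is equivalent to the statement that $M_{x} = 0$ for every $x \in \Spec^{\mathcal{O}} R$ forces $M = 0$; the reverse implication $M = 0 \Rightarrow \Shf^{\mathcal{O}}(M) = 0$ is automatic, since the defining presheaf is then identically zero.

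For the first half (Zariski satisfies (*)) I would establish the local--global principle $M = 0 \iff M_{\mathfrak{p}} = 0$ for all primes $\mathfrak{p}$. Given $0 \neq m \in M$, the map $\rho_{m}\colon R \to M$, $r \mapsto rm$, is a homomorphism of $R$-modules, so its fibre $\Ann(m) = \rho_{m}^{-1}(0) = \{g \in R \mid gm = 0\}$ is an ideal of $R$. Here the hypothesis that $V$ has a constant operator $0$ is essential: it supplies the absorbing element $0 \in M$ needed to define $\Ann(m)$, and it makes $\Ann(m)$ a \emph{proper} ideal precisely when $m \neq 0$, since $1 \in \Ann(m)$ would read $m = 1\cdot m = 0$ (compare Proposition \ref{prop:congruence:max}(1)). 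Applying the existence of a maximal ideal containing any non-unit ideal, I obtain a prime $\mathfrak{p} \supseteq \Ann(m)$; then no element outside $\mathfrak{p}$ annihilates $m$, so $m/1 \neq 0$ in $M_{\mathfrak{p}}$ and the stalk of $\Shf^{\Zar}(M)$ at $\mathfrak{p}$ is nonzero. Hence $M \neq 0 \Rightarrow \Shf^{\Zar}(M) \neq 0$, which is exactly (*).

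For the second half I would argue by contraposition: if a topology $\mathcal{O}$ is strictly coarser than the Zariski topology, it fails (*). Strictness yields an $\mathcal{O}$-covering $\{R_{f_{i}} \to R\}_{i}$ which is not a Zariski covering, i.e.\ the ideal $(f_{i})_{i}$ is not the unit ideal in $\func{rad}(I(R))$. Choosing (again by the maximal-ideal result) a maximal ideal $\mathfrak{p} \supseteq (f_{i})_{i}$, I claim the witness module is $M = R/\mathfrak{p}$, which is nonzero because $\mathfrak{p}$ is proper. Since every $f_{i}$ lies in $\mathfrak{p}$ while $\{D(f_{i})\}$ must cover $\Spec^{\mathcal{O}} R$, the point $\mathfrak{p}$ cannot belong to $\Spec^{\mathcal{O}} R$. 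For every actual point $x \in \Spec^{\mathcal{O}} R$ the corresponding prime $\mathfrak{q}_{x}$ is a proper ideal distinct from $\mathfrak{p}$, so maximality of $\mathfrak{p}$ gives $\mathfrak{p} \not\subseteq \mathfrak{q}_{x}$ and hence $(R/\mathfrak{p})_{x} = 0$; thus $\Supp(M)$ meets $\Spec^{\mathcal{O}} R$ in the empty set and $\Shf^{\mathcal{O}}(M) = 0$ although $M \neq 0$. This contradicts (*), so any topology with (*) is finer than Zariski.

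The main obstacle I anticipate is not the logical skeleton above but the geometry over a general $W$-algebra. I must justify, without the full ring structure, that the stalks of $\Shf^{\mathcal{O}}$ really are the naive localizations, that the points of $\Spec^{\mathcal{O}} R$ correspond to proper prime(-like) ideals so that maximality of $\mathfrak{p}$ forces $\mathfrak{q}_{x} = \mathfrak{p}$, and that the support of $R/\mathfrak{p}$ is exactly $V(\mathfrak{p})$. Controlling $\Ann(m)$ when it is not finitely generated (whereas $I(R)$ records only finitely generated ideals) is the delicate point for invoking the existence of a prime above it; I expect to reduce this, via Lemma \ref{lem:ideal:derive:op} and the absorbing behaviour of the constant operator $0$, to the finitely generated subideals, exactly as in the proof that the $\{R_{f_{i}}\}$ cover.
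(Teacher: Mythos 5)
Your first half (Zariski satisfies (*)) is correct in substance, but it takes a genuinely different route from the paper's. The paper never forms $\Ann(m)$: given $0\neq x\in M$ it takes the kernel \emph{congruence} $\mathfrak{a}$ of $R\to Rx$, obtains a maximal congruence $\mathfrak{m}\supseteq\mathfrak{a}$ from Proposition \ref{prop:congruence:max} (cheap, because the unit congruence is principally generated by $(1,0)$), and only then extracts the prime ideal $\mathfrak{n}=\{a\mid (a,0)\in\mathfrak{m}\}$, whose primality follows easily from the simplicity of $R/\mathfrak{m}$. Your route through the annihilator ideal instead commits you to proving, for general $W$-algebras, that a proper ideal lies under a maximal one (Zorn works, since the unit ideal is generated by $1$; note the paper's maximal-ideal proposition is stated for semirings and does not apply verbatim) and that maximal ideals are prime (true, via Lemma \ref{lem:ideal:derive:op} and the distribution law, but not in the paper). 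These are exactly the obligations you flag and defer; they are fillable, so this half stands as an alternative to the paper's congruence detour.

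The second half has a genuine gap. You identify each point $x\in\Spec^{\mathcal{O}}R$ with a \emph{prime ideal} $\mathfrak{q}_{x}$ and then invoke maximality of $\mathfrak{p}$ to get $\mathfrak{p}\not\subseteq\mathfrak{q}_{x}$ and hence vanishing of the stalk at $x$. But for an arbitrary coherent topology $\mathcal{O}$ --- and in this half $\mathcal{O}$ is arbitrary, subject only to (*) and to having one covering outside the Zariski topology --- a point of $\Spec^{\mathcal{O}}R$ is merely a lattice homomorphism $\Omega_{1}^{\mathcal{O}}(R)\to\mathbf{1}$, i.e.\ a saturated multiplicative set $S_{x}=\{g\mid x\in D(g)\}$ (writing $D(g)$ for $\{R_{g}\to R\}$, as you do). Its complement is multiplicatively prime and stable under multiplication by $R$, but it need not be closed under the $V$-operations, hence need not be an ideal at all: already for $R=\ZZ$ and the topology $\mathcal{O}^{\min}$ there is a point whose $S_{x}$ is the saturation of $\{6^{n}\}$, and the complement contains $5$ and $13$ but not $5+13=18$. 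So the maximality step, and with it your stalk computation, collapses. The repair is the observation you yourself made one sentence earlier, and it is precisely the paper's argument: since $\{D(f_{i})\}_{i}$ is an $\mathcal{O}$-covering, $\bigvee_{i}D(f_{i})=1$ in $\Omega_{1}^{\mathcal{O}}(R)$, so \emph{every} point $x$ lies in some $D(f_{i})$; then $f_{i}\in S_{x}$ acts invertibly on the stalk $S_{x}^{-1}M$ and acts as zero on $M$ (as $f_{i}\in\mathfrak{p}$), so the stalk vanishes --- no comparison between $\mathfrak{p}$ and the points is needed (the paper even avoids points entirely: $\Shf^{\mathcal{O}}(M)|_{U_{i}}$ is the sheaf associated to $M\otimes_{R}R[f_{i}^{-1}]=0$, and the $U_{i}$ cover). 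Finally, a shared soft spot: your $M=R/\mathfrak{p}$, like the paper's $M=R/J$, is really a quotient by a \emph{congruence}, and ``nonzero because $\mathfrak{p}$ is proper'' is not a formality for general $W$-algebras, since properness of an ideal does not by itself imply properness of the congruence it generates; the paper glosses this point as well, but it is exactly the ideal-versus-congruence subtlety that \S 5 is about, and a complete write-up should address it via Proposition \ref{prop:congruence:max}(1).
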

\begin{proof}
First, we will show that $\Shf^{\Zar}(M)=0$ implies $M=0$.
Let $x \in M$ be a non-zero element.
We have an isomorphism $R/\mathfrak{a} \to Rx$
of $R$-modules, where $\mathfrak{a}$ is a congruence on $R$
as an $R$-module.
Since $(1,0) \notin \mathfrak{a}$,
there is a maximal congruence $\mathfrak{m}$ containing $\mathfrak{a}$
by Proposition \ref{prop:congruence:max}.
Let $\mathfrak{n}$ be the subset of $R$ consisting of elements
$a$ such that $(a,0) \in \mathfrak{m}$.
We see that $\mathfrak{n}$ is a prime ideal,
and the stalk $\Shf^{\Zar}(M)_{\mathfrak{n}}$ is non-zero.

Next, we will show that the Zariski topology is indeed
the coarsest topology satisfying (*).
Let $\mathcal{O}$ be a topology satisfying (*).
Suppose $\{R[f_{i}^{-1}] \to R\}_{i} \in \mathcal{O}_{R}$
is not in $\mathcal{O}^{\Zar}_{R}$,
namely $(f_{i})_{i}$ does not generate the unit ideal in $R$.
Let $J$ be the congruence on $R$
generated by $\{(0,f_{i})\}_{i}$.
Then $M=R/J$ is not a zero $R$-module.
However, $\Shf^{\mathcal{O}}(M)|_{U_{i}}=0$,
where $U_{i}=\Spec^{\mathcal{O}}R[f_{i}^{-1}]$
is an open subset of $X=\Spec^{\mathcal{O}}R$ for each $i$.
By definition, $U_{i}$'s cover $X$.
Therefore, $\Shf^{\mathcal{O}}(M)=0$, a contradiction.
\end{proof}


\textsc{S. Takagi: Faculty of Science,
Osaka City University, Osaka, 558-8585, Japan}

\textit{E-mail address}: \texttt{takagi@sci.osaka-cu.ac.jp}
\end{document}